\documentclass[preprint,11pt]{elsarticle}
\journal{}
\usepackage{graphicx}
\usepackage{pifont,latexsym,ifthen,amsthm,rotating,calc,textcase,booktabs}
\usepackage{amsfonts,amssymb,amsbsy,amsmath,extarrows,bm}
\allowdisplaybreaks
\newtheorem{theorem}{Theorem}[section]
\newtheorem{lemma}[theorem]{Lemma}
\newtheorem{corollary}[theorem]{Corollary}

\newtheorem{proposition}[theorem]{Proposition}

\newtheorem{Dbar}{$\bar{\partial}$-Problem}[section]
\numberwithin{equation}{section}
\usepackage[dvipsnames]{xcolor}
\usepackage{natbib}
\newtheorem{RHP}{RH problem}[section]
\usepackage{todonotes}
\usepackage{float}
\usepackage{subfigure}
\usepackage{verbatim}
\usepackage{hyperref}
\bibliographystyle{unsrt}
\hypersetup{
    colorlinks=true, 
    linktoc=all,     
    linkcolor=blue,  
    citecolor=blue
}

\DeclareMathOperator*{\res}{Res}
\DeclareMathOperator*{\im}{Im}
\DeclareMathOperator*{\re}{Re}
\usepackage{tikz}
\textwidth=15cm  \hoffset=-1.5cm \textheight=22cm \voffset=-2.5cm
\arraycolsep=2pt
\begin{document}



\begin{frontmatter}
\title{ The Cauchy problem for the Degasperis-Procesi Equation: Painlev\'e  Asymptotics  in    Transition Zones}


\author[inst2]{Zhaoyu Wang}

\author[inst3]{Xuan Zhou}

\author[inst2]{Engui Fan$^{*,}$  }

\address[inst2]{ School of Mathematical Sciences and Key Laboratory of Mathematics for Nonlinear Science, Fudan University, Shanghai, 200433, P. R. China}
\address[inst3]{College of Mathematics and Systems Science, Shandong University of Science and Technology, Qingdao, 266590,  P. R. China\\
* Corresponding author and e-mail address: faneg@fudan.edu.cn  }





\begin{abstract}
  \baselineskip=15pt
The Degasperis-Procesi (DP) equation
\begin{align}
    &u_t-u_{txx}+3\kappa  u_x+4uu_x=3u_x u_{xx}+uu_{xxx},  \nonumber
  \end{align}
serving as an asymptotic approximation for the unidirectional propagation of shallow
water waves, is an integrable model of the Camassa-Holm type and admits a  $3\times3$ matrix Lax pair.
  In our  previous work,
  we obtained the  long-time asymptotics of
  the solution $u(x,t)$  to the Cauchy problem for the DP equation   in the solitonic  region   $\{(x,t): \xi>3    \}
  \cup \{(x,t):  \xi<-\frac{3}{8}  \}$    and the solitonless  region  $\{(x,t):  -\frac{3}{8}<\xi< 0   \} \cup \{(x,t):  0\leq \xi <3  \}$ where $\xi:=\frac{x}{t}$.
In this paper, we derive
 the leading order approximation to the solution $u(x,t)$ in terms of the  solution for the  Painlev\'{e} \uppercase\expandafter{\romannumeral2} equation
  in two  transition zones $\left|\xi+\frac{3}{8}\right|t^{2/3}<C$ and $\left|\xi -3\right|t^{2/3}<C$ with $C>0$  lying between the solitonic region and  solitonless region.
Our results are established  by performing the $\bar \partial$-generalization of the Deift-Zhou nonlinear steepest descent method and applying a double scaling limit technique to an associated  vector Riemann-Hilbert problem.

\end{abstract}

\begin{keyword}
Degasperis-Procesi equation \sep Riemann-Hilbert problem \sep $\bar{\partial}$-steepest descent analysis \sep
 Painlev\'{e} transcendents\vspace{2mm}

 \textit{AMS subject classification:} 35Q53; 35Q15;  35B40; 37K15;   33E17; 34M55.
  \end{keyword}
\end{frontmatter}
\tableofcontents

\section{Introduction and Main Results}
In this paper, we investigate the Painlev\'e asymptotics for  the solution to the  Cauchy problem  of   the
 Degasperis-Procesi (DP) equation
\begin{align}
    &u_t-u_{txx}+3\kappa u_x+4uu_x=3u_x u_{xx}+uu_{xxx}, \quad x\in\mathbb{R}, \ t>0, \label{DP}\\
    &u(x,0)=u_{0}(x), \quad x \in \mathbb{R},\label{intva}
\end{align}
where
$\kappa$ is a positive constant  characterizing  the effect of the linear dispersion.
The DP equation (\ref{DP}) refers back to the work of Degasperis and Procesi, who  searched for asymptotically integrable partial differential equations \cite{AM,ADD}.
 Subsequently, it was found that the DP equation arises in the propagation of shallow water waves over a flat bed in the   moderate amplitude regime  \cite{RS,RI1, AD,BD}.

Owing to its   integrable structure and elegant mathematical properties, the DP equation \eqref{DP} has
attracted great research interest, and significant progress has been achieved in the past few years. The DP equation admits not only peakon solitons but also shock peakons \cite{H}.
The global existence and blow-up phenomena were analyzed in \cite{YZ}.  The orbital stability problem of the peaked solitons to the DP equation on the line
was  proved  by constructing a Lyapunov function  \cite{Liu}.
Moreover, the  $L^2\cap L^\infty$  orbital stability of the sum of smooth solitons
in the DP equation   was established in \cite{LLW}.  Furthermore, multi-soliton solutions of the  DP  equation  were
constructed using a   reduction procedure  for
multi-soliton solutions of  the Kadomtsev-Petviashvili hierarchy  \cite{Mu}.
      Algebro-geometric solutions  for the whole  DP hierarchy were   constructed in
\cite{HZF}.
Additionally, the KAM theory  near  an elliptic fixed point for quasi-linear Hamiltonian perturbations of the DP  equation on the circle  was   presented in \cite{FGP}.

The DP  and  Camassa-Holm (CH) equations  are only two integrable numbers
 corresponding to $b=3$ and $b=2$, respectively,  of the following  b-family equation
 \begin{align}
    &u_t-u_{txx}+b\kappa u_x+(b+1)uu_x=bu_x u_{xx}+uu_{xxx},
\end{align}
where $b$ is a constant. Despite there being many similarities between DP and CH equations \cite{RI2},  the spectral analysis of the corresponding Lax pairs
 greatly differ due to the fact that the
 CH equation admits a $2 \times 2$-matrix  spectral problem    \cite{RD}, whereas the DP equation admits a $3 \times 3$-matrix  one \cite{ADD}.
 This difference will  lead  to some new difficulties and challenges on the  inverse scattering transform (IST) and asymptotic analysis of integrable systems with  $3 \times 3$-matrix spectral problem, such as  the Novikov equation \cite{3Nov1,3Nov2}, the Boussinesq equation \cite{3Bou1,3Bou2,3Bou3,3Bou4,3Bou5}, and the Sasa-Satsuma equation \cite{3SS1,3SS2, pa6}.
Thus, although the application of the   IST  to the CH equation has been studied extensively \cite{CH1,CH2,CH3,CH4,CH5,CH6}, the implementation of the IST to DP equation has proved to be more intricate.

An inverse spectral  approach for computing the $n$-peakon solutions of the  DP equation (\ref{DP}) was presented in \cite{isth}, and then
IST   was subsequently developed for  the Cauchy problem  of the DP equation    \cite{ARIJ}.
The soliton solutions of the DP equations were constructed  using the dressing method \cite{Constin}.
Additionally,  an   alternative  Riemann-Hilbert (RH)   approach  was  proposed   to   express  the solution  of the DP equation  in terms of the solution of
 the  RH problem evaluated at a distinguished point in the spectral parameter plane.
This approach can be effectively utilized in studying the long-time behavior of the solution \cite{AD2}.
Further,  the  initial-boundary value problem for the DP equation  (\ref{DP}) on the half-line was  considered in \cite{LDP}  and  an
 explicit formula for the leading order asymptotics of the solution in the similarity
 region was obtained in \cite{MLS}.

Based on the work \cite{AD2},  we considered  the Schwarz initial data $u_0(x)$  that supports   presence of  solitons.
With three critical lines $\xi:=\frac{x}{t}=-\frac{3}{8}, \ \xi=0, \ \xi=3$, we   divide the  upper half $ (x,t)$-plane into  the following   two classes of  distinct  space-time  regions
   denoted by (see Figure \ref{xtzones})
\begin{itemize}
	\item  Solitonic regions  $\mathcal{S}:=\mathcal{S}_1\cup \mathcal{S}_2$, where

$\mathcal{S}_1= \{(x,t):  \xi>3    \}, \ \ \mathcal{S}_2=\{(x,t):  \xi<-\frac{3}{8}  \}$;
	\item  Solitonless  regions  $\mathcal{Z}:=\mathcal{Z}_1\cup \mathcal{Z}_{2}$, where
	
	 $\mathcal{Z}_1:=\{(x,t):  -\frac{3}{8}<\xi< 0  \} $ with 24 phase points without a soliton;
	
	  $\mathcal{Z}_{2}:=\{(x,t): 0\le \xi< 3  \} $  with 12 phase points without a soliton.

\end{itemize}
Furthermore, the long-time  asymptotics  and soliton resolution
 for the DP equation (\ref{DP}) in two  classes  of  regions  $\mathcal{S}$ and     $\mathcal{Z}$  were   obtained \cite{zx1}.
The remaining question is how to describe   the  asymptotics of the solution to  the Cauchy problem \eqref{DP}-\eqref{intva} in the  following   zones
near the three critical lines (see Figure \ref{xtzones})

\begin{itemize}

 \item The  transition zone $\mathcal{T}_1 :=|\xi+\frac{3}{8}|t^{2/3}<C$ (between   $\mathcal{S}_1$  and $\mathcal{Z}_1$);

 \item The transition zone $\mathcal{T}_{2}:=|\xi-3|t^{2/3}<C$ (between    $\mathcal{S}_2$ and $\mathcal{Z}_{2}$);

\item The   zone $\mathcal{T}_{3}:=|\xi|t^{2/3}<C$ with $\xi<0$ (between  $\mathcal{Z}_1$ and $\mathcal{Z}_{2}$),
which is    not    a   transition zone in the sense of  leading-order approximation. The   detailed  proof will be given  in \ref{T0}.

\end{itemize}

It is the aim of the present work to fulfill the asymptotic picture of the DP equation (\ref{DP})  by focusing on the asymptotics   in two  transition zones $\mathcal{T}_1$ and $\mathcal{T}_2$ as $t \to \infty$.
We find that  the leading order approximation to the solution $u(x,t)$  in  the  transition zones   $\mathcal{T}_1$ and $\mathcal{T}_2$  can be described  in terms of the solution of the Painlev\'e equation. Our  main  results    are  stated  as follows.

\begin{figure}[htbp]
				\centering			
				\begin{tikzpicture}
					\node[anchor=south west,inner sep=0] at (0,0) {\includegraphics[width=4.6in]{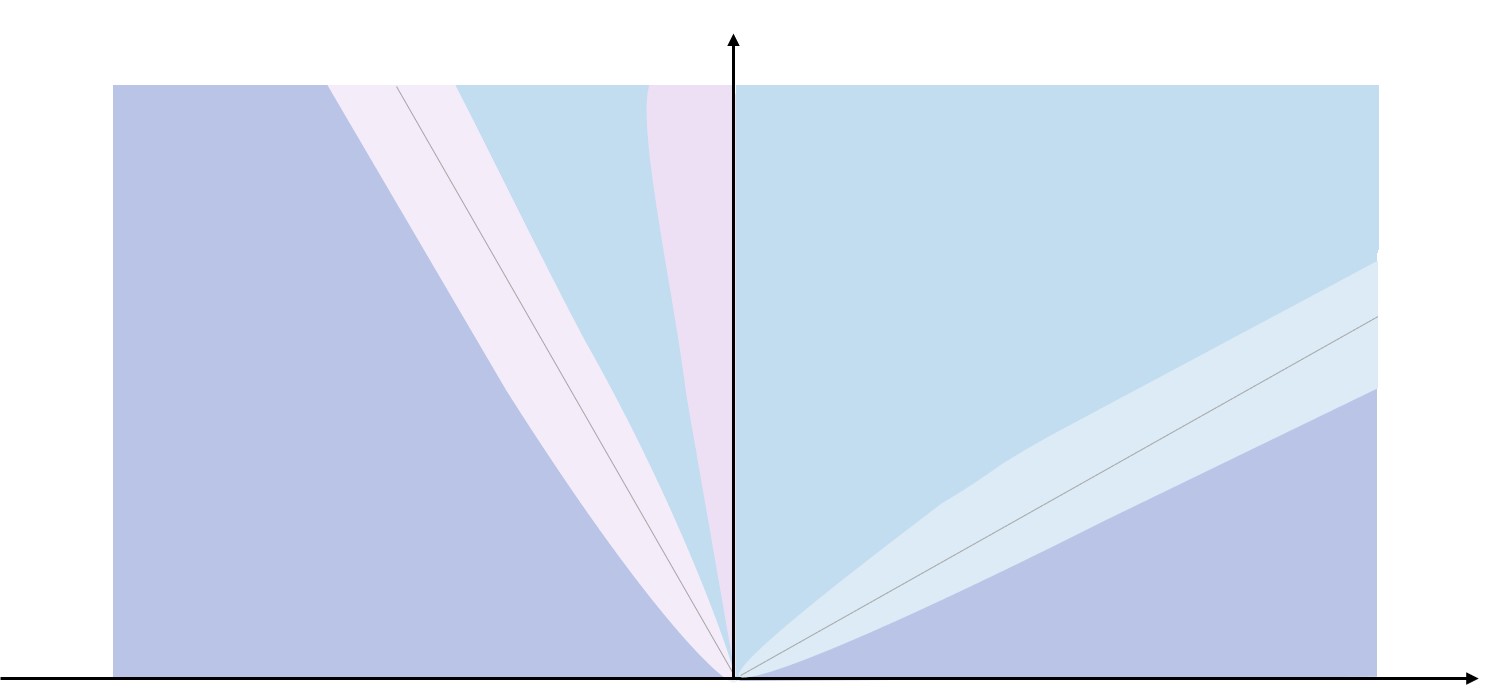}};
					\node    at (5.7,-0.1)  {$0$};
					\node    at (3.3,5.2)  {$\xi=-\frac{3}{8}$};
					\node    at (11.3,3)  {$\xi=3$};
				    \node    at (3.8,3.3)  {$\mathcal{T}_{1}$};
					\node    at (8.9,2)  {$\mathcal{T}_{2}$};
					\node    at (5.34,4) {$\mathcal{T}_{3}$};
				    \node    at (2.5,2)  {$\mathcal{S}_1$};
				    \node    at (9,1)  {$\mathcal{S}_2$};
				    \node    at (4.8,3.8)  {$\mathcal{Z}_1$};
				    \node    at (7.2,3.5)  {$\mathcal{Z}_2$};
					\node    at (11.5,0.35)  {$x$};
					\node    at (5.65,5.57)  {$t$};
				\end{tikzpicture}		
	\caption{\footnotesize   The distinct  space-time asymptotic regions in   $ (x,t)$-plane:
Solitonic regions: $\mathcal{S}_1\cup \mathcal{S}_2$;  Solitonless  regions  $\mathcal{Z}_1 \cup \mathcal{Z}_2$;  Transition zones $\mathcal{T}_{1} \cup \mathcal{T}_{2}$;
Non-transition zone  $ \mathcal{T}_3$.
		  }
\label{xtzones}
\end{figure}



\begin{theorem}\label{th1}
	Let $u_{0}\in\mathcal{S}(\mathbb{R})$ be a function in the Schwartz class,   $r(k)$  and $\{\zeta_n \}_{n=1}^{6N}$  be the associated reflection coefficient and discrete spectrum, respectively.	
Then as $t \to \infty$, the solution $u(x,t)$ of the Cauchy problem \eqref{DP}-\eqref{intva}   satisfies the following asymptotic approximation  formula.
\begin{itemize}
	\item   
   In the first transition zone  $   \mathcal{T}_{1}$,
	\begin{align}\label{ltp1}
	u(x,t)=t^{-1/3} f_1(e^{\frac{\pi}{6}\mathrm{i}};x,t)+\mathcal{O}(t^{-2/3+4\delta_1}),
	\end{align}
	where $\frac{1}{27}<\delta_1<\frac{1}{12}$ and
	\begin{align*}
	&f_1(e^{\frac{\pi}{6}\mathrm{i}};y,t) = \frac{\partial}{\partial t} \left(\sum_{j=1}^3 \left( E_1(e^{\frac{\pi}{6}\mathrm{i}};s)_{j2} -E_1(e^{\frac{\pi}{6}\mathrm{i}};s)_{j1}\right) \right)
	\end{align*}
	with
		\begin{equation}
	s =  \frac{ 2^{2/3}(-7+\sqrt{21})}{3^{2/3} (98-21\sqrt{21})^{1/3}} \left(\frac{x}{t} +\frac{3}{8}\right) t^{\frac{2}{3}}.
	\end{equation}
	Here,  $E_1(k;s)$  is given by \eqref{p1p1} which is expressed in terms of   the unique solution $v(s)$ of the
	Painlev\'{e} \uppercase\expandafter{\romannumeral2}  equation
	\begin{equation}\label{p1pain2}
v_{ss} = 2v^3 +sv,
	\end{equation}
	characterized by
	\begin{equation}
	v(s) \sim \left| r\left(\frac{\sqrt{7}+\sqrt{3}}{2}\right)\right| {\rm Ai}(s), \quad s \to +\infty.
	\end{equation}
Moreover, $E_1(k;s)_{ji}$ represents the element in the $j$-th row and $i$-th column of the matrix $E_1(k;s)$.

	\item 
 In the second transition zone   $ \mathcal{T}_{2}$,
	\begin{align}\label{ltp2}
	&u(x,t)=t^{-1/3}f_2(e^{\frac{\pi}{6}\mathrm{i}};x,t)+\mathcal{O}(t^{-2/3+4\delta_2 }),
	\end{align}
	where $\frac{1}{27}<\delta_2<\frac{1}{12}$ and
	\begin{align*}
	&f_2(e^{\frac{\pi}{6}\mathrm{i}};x,t) = \frac{\partial}{\partial t} \left(\sum_{j=1}^3 \left( E_2(e^{\frac{\pi}{6}\mathrm{i}};s)_{j2} -E_2(e^{\frac{\pi}{6}\mathrm{i}};s)_{j1}\right) \right)
	\end{align*}
	with
	 \begin{equation}
	s = 3^{-\frac{2}{3}}\left(\frac{x}{t}-3\right)t^{\frac{2}{3}}.
	\end{equation}
	Here,  $E_2(k;s)$ is determined by \eqref{p2p1}  which is expressed in terms of the unique solution $v(s)$ of the
	Painlev\'{e} \uppercase\expandafter{\romannumeral2}  equation \eqref{p1pain2}  with
	\begin{equation}
		v(s) \sim -\left| r(1)\right| {\rm Ai}(s), \quad s \to +\infty.
	\end{equation}
	Moreover,  $E_2(k;s)_{ji}$ represents the element in the $j$-th row and $i$-th column of the matrix $E_2(k;s)$.
\end{itemize}
\end{theorem}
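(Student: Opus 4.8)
The plan is to establish both formulas by a single $\bar\partial$-steepest descent analysis of the vector RH problem carrying the scattering data $(r(k),\{\zeta_n\}_{n=1}^{6N})$, specialized to the regime in which the controlling phase points coalesce. Since the reconstruction of $u(x,t)$ in the RH approach of \cite{AD2} evaluates the RH solution (and a $t$-derivative of its entries) at the fixed distinguished point $k=e^{\pi\mathrm{i}/6}$, the entire argument reduces to tracking how each transformation acts on the solution in a neighborhood of that point. First I would recall the phase function $\theta(k;\xi)$ governing the oscillatory exponentials $e^{\pm t\theta}$ and locate its stationary points $\theta'(k)=0$. The defining feature of the transition zones is that, as $\xi\to-\tfrac38$ (for $\mathcal{T}_1$) and $\xi\to3$ (for $\mathcal{T}_2$), a pair of real stationary points merges into a single degenerate saddle; under the cubic spectral symmetry of the $3\times3$ problem this merging point is carried to $k=\tfrac{\sqrt7+\sqrt3}{2}$ and $k=1$, respectively, which is precisely why those values of $r$ enter the Painlev\'e data.

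Next I would run the standard chain of deformations. A scalar function $T(k)$, solving an auxiliary scalar RH problem, is used to absorb the residue conditions at the discrete spectrum and to renormalize the jump near the coalescing saddle; because both zones border the solitonless region, the soliton contributions are exponentially small after this conjugation. The second transformation opens lenses along the steepest-descent rays issuing from the merged saddle, using the upper/lower factorization of the jump matrix so that off the rays the oscillatory factors decay. As $r$ is only Schwartz and not analytic, I replace its analytic continuation by a smooth $\bar\partial$-extension, which produces a pure $\bar\partial$-problem; the associated $\bar\partial$ solution operator has small norm, and the integral-equation estimate shows that its contribution, together with the parametrix-matching error below, is bounded by $\mathcal{O}(t^{-2/3+4\delta_j})$. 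The exponent window $\tfrac1{27}<\delta_j<\tfrac1{12}$ is dictated by requiring both contributions to be $o(t^{-1/3})$, hence genuinely subleading to the main term.

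The core of the proof is the local parametrix at the merged saddle, where I would apply the double-scaling limit: rescaling the spectral variable by $t^{1/3}$ about the merging point and letting $t\to\infty$ with $s$ fixed converts the local model RH problem into the Flaschka--Newell model for the Painlev\'e \uppercase\expandafter{\romannumeral2} equation \eqref{p1pain2}. Its solution defines the matrices $E_1(k;s)$ and $E_2(k;s)$ through \eqref{p1p1} and \eqref{p2p1}, and the Deift--Zhou vanishing-lemma and connection analysis single out the Hastings--McLeod-type transcendent $v(s)$ with $v(s)\sim\alpha\,\mathrm{Ai}(s)$ as $s\to+\infty$, where $\alpha=|r(\tfrac{\sqrt7+\sqrt3}{2})|$ for $\mathcal{T}_1$ and $\alpha=-|r(1)|$ for $\mathcal{T}_2$; the opposite signs reflect the distinct orientation of the phase near the two saddles. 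Matching this parametrix to the identity-type outer parametrix on a small circle about the saddle and solving the resulting small-norm RH problem for the error yields the uniform estimate.

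The last step is bookkeeping: I would expand the error together with the explicit parametrices at $k=e^{\pi\mathrm{i}/6}$, reverse the transformations, and insert the result into the $t$-derivative reconstruction formula, extracting the leading term $t^{-1/3}f_j(e^{\pi\mathrm{i}/6};x,t)$ with the stated similarity variable $s$. I expect the principal obstacle to be the $3\times3$ structure inherited from the DP Lax pair: the phase function has many more stationary points than in the $2\times2$ Camassa--Holm case (twenty-four and twelve in the two adjacent solitonless regions), so I must verify that only the single coalescing pair contributes at leading order while all remaining saddles stay separated and contribute subleading Airy-type decay, and I must embed the essentially $2\times2$ Painlev\'e parametrix consistently into the $3\times3$ framework and carry it faithfully back to the reconstruction point.
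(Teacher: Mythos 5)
Your overall chain of deformations (conjugation by a scalar/diagonal $T$, pole interpolation making the discrete-spectrum contributions exponentially small, $\bar\partial$-lens opening for the non-analytic Schwartz data, Painlev\'e~II local parametrices obtained by a $t^{1/3}$ double-scaling limit, a small-norm error problem, and evaluation at $k=e^{\pi\mathrm{i}/6}$ through the reconstruction formula) is exactly the architecture of the paper's proof, and your error budget $\mathcal{O}(t^{-2/3+4\delta_j})$ with $\tfrac1{27}<\delta_j<\tfrac1{12}$ is also the paper's. However, there is a genuine gap in your treatment of the stationary-point geometry, and it is not a technicality: you assert that in each transition zone a \emph{single} pair of real stationary points merges (carried to $k=\tfrac{\sqrt7+\sqrt3}{2}$, resp.\ $k=1$), and that ``only the single coalescing pair contributes at leading order while all remaining saddles stay separated and contribute subleading Airy-type decay.'' This is false for the DP phase $\theta_{12}$. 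In $\mathcal{T}_1$ \emph{all eight} real saddle points coalesce pairwise and simultaneously as $\hat\xi\to-\tfrac38$: $k_1,k_2\to k_a=\tfrac{\sqrt7+\sqrt3}{2}$, $k_3,k_4\to k_b=\tfrac{\sqrt7-\sqrt3}{2}$, $k_5,k_6\to k_c=-k_b$, $k_7,k_8\to k_d=-k_a$, each at the same rate $|k_j-k_{a,b,c,d}|\lesssim t^{-1/3}$; together with the $\omega$-rotations this forces \emph{twelve} Painlev\'e~II local parametrices, and every one of them enters the leading coefficient --- indeed $E_1(k;s)$ in \eqref{p1p1} is precisely the sum over $j=a,b,c,d$ (and the rotated copies) of the residue-type terms $M_j^{(1)}(s)/(c_j(k-k_j))$. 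Likewise in $\mathcal{T}_2$ both pairs merge, into $\pm1$, and $E_2$ in \eqref{p2p1} sums over both (and rotations). A parametrix built only at $k_a$ (or $k=1$) with the other points handled as non-degenerate saddles would fail twice over: those points are \emph{not} separated (the standard $t^{-1/2}$ parabolic-cylinder analysis breaks down there for exactly the same coalescence reason), and dropping them changes the leading term itself, so you could not even recover the formula for $f_1$, $f_2$ stated in the theorem.

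Two smaller inaccuracies in the same direction: the relevant Painlev\'e~II solution here is of Ablowitz--Segur type (Stokes data $c_1=\mathrm{i}|d(k_a)|$ with $|c_1|<1$, giving $v(s)\sim \alpha\,\mathrm{Ai}(s)$ with $|\alpha|<1$), not a Hastings--McLeod-type transcendent; and the sign discrepancy $\alpha=|r(\tfrac{\sqrt7+\sqrt3}{2})|$ versus $\alpha=-|r(1)|$ comes from the opposite triangular factorizations of $V^{(2)}$ used in the two zones (lower--upper in $\mathcal{T}_1$, upper--lower in $\mathcal{T}_2$, reflecting the sign tables of $\im\theta_{12}$), which your ``orientation of the phase'' remark gestures at but does not pin down. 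To repair the proposal you must (i) construct the coupled local models at all four (resp.\ two) merged real points plus their rotations, (ii) verify that each local model matches the $3\times3$ modified Painlev\'e~II model RH problem after its own scaling $c_j t^{1/3}(k-k_j)$ with its own phase $\varphi_j$, and (iii) assemble the error function's jump on $\partial\mathrm{U}$ from the \emph{sum} of these parametrices before expanding at $e^{\pi\mathrm{i}/6}$.
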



Let us mention some previous works on the asymptotics of other integrable systems   in   transition zones. It is worthwhile to see that the Painlev\'e transcendents plays an important role in asymptotic studies of integrable systems.
The appearance of Painlev\'e transcendents in transition zones, as well as the connection between the different zones, was first understood in the case of the Korteweg-de Vries (KdV)  equation \cite{pa1} and the modified KdV  (mKdV) equation \cite{pa2}.  Moreover, this phenomenon also appeared in the study of higher-order Painlev\'e  asymptotics for the mKdV equation \cite{Charlier2020} and its hierarchy \cite{pa4}.
Later, Painlev\'e transcendents  were  discovered in the other integrable systems, like the CH equation \cite{pa5}, Sasa-Satsuma equation \cite{pa6},
 focusing and defocusing nonlinear Schr\"{o}dinger  equations with  nonzero backgrounds \cite{Miller1,WF}, and the modified CH equation \cite{xyz}.  In addition, the Painlev\'e transcendents also appeared in  research on the small dispersion limit \cite{sl1,sll2,sll3,sl4,sl5}.

The rest of this paper is arranged as follows.
In Section \ref{2}, we focus on the IST  to  establish  a  basic matrix RH problem for $M(k)$  and a    vector RH problem for $m(k)$
associated with the Cauchy problem   \eqref{DP}-\eqref{intva}.
After two  preliminary transformations, the vector  RH problem for $m(k)$
 is found to be asymptotically equivalent to a regular RH problem for $m^{(2)}(k)$,  which can be deformed into a solvable model RH problem by  performing  the nonlinear $\bar\partial$-steepest descent approach.
In Sections \ref{7} and \ref{8}, we investigate  Painlev\'{e} asymptotics of the DP equation \eqref{DP} in transition zones  $\mathcal{T}_{1}$ and $\mathcal{T}_{2}$, respectively.
We open $\bar\partial$-lenses to construct a hybrid $\bar\partial$-RH problem, which can be decomposed into a pure RH problem and a pure $\bar\partial$-problem.
The  modified RH problem for Painlev\'e II equation and the corresponding model RH problem for transition zones are introduced in  \ref{appx}, and they  play an important role in
asymptotically  matching   the pure RH problem    with the local parametrix under the  double scaling limit technique.
Finally,  proofs  for two cases  to   Theorem \ref{th1}  are provided at the end of   Sections \ref{7} and \ref{8}, respectively.
In $\mathcal{T}_{3}$ in  \ref{T0},  we show that the leading order asymptotics in  $\mathcal{Z}_1$  matches  that in  $\mathcal{Z}_2$, and thus  there is no    transition region between them.

\section{An RH Characterization of the DP Equation}\label{RHconstruct}\label{2}

The RH problem associated to the Cauchy problem  \eqref{DP}-\eqref{intva} for the DP equation can be constructed using direct and inverse scattering
transform. Here, we provide a brief overview of the corresponding RH problem. For a detailed construction of this RH problem, please refer to \cite{AD2}.

\subsection{A basic RH problem}

It is worth noting that
without loss of generality, one can select   $\kappa=1$  in the DP equation \eqref{DP}.
Then the DP equation \eqref{DP}  admits   the   Lax pair \cite{AD2}
\begin{equation}\label{lax pair}
     \Phi_x=U\Phi, \quad \Phi_t=V\Phi,
\end{equation}
where
\begin{equation}
\begin{split}
& U=\left(\begin{array}{ccc} 0 & 1 & 0 \\ 0 & 0 & 1 \\ z^3q^3 & 1 & 0\end{array}\right), \quad
 V=\left(\begin{array}{ccc}  u_x-\frac{2}{3 } z^{-3} & -u & z^{-3}  \\ u+1 &  z^{-3} & -u \\ u_x-z^3uq^3 & 1 & -u_x+ z^{-3}\end{array}\right),\\
\end{split}
\end{equation}
and $q=(1+u-u_{xx})^{1/3}$.

Let $\lambda_{j}(z),\ j=1,2,3,$ satisfy the algebraic equation
\begin{equation}
\lambda^{3}-\lambda=z^{3},
\end{equation}
so that $\lambda_{j}(z)\sim\omega^{j}z$ as $z \rightarrow \infty$ with  $\omega=\mathrm{e}^{\frac{2\pi}{3}\mathrm{i}}$. Denoting
\begin{align}
& D(x,t) = \left(\begin{array}{ccc} q^{-1}  & 0 & 0 \\
0& 1 & 0\\
0 & 0&  q  \end{array}\right),  \ \
  P(z)= \left(\begin{array}{ccc} 1 & 1 & 1 \\
\lambda_{1}(z) & \lambda_{2}(z) & \lambda_{3}(z) \\
\lambda_{1}^{2}(z) & \lambda_{2}^{2}(z) & \lambda_{3}^{2}(z) \end{array}\right), \nonumber
\end{align}
and making a transformation
\begin{equation}\label{hatphi}
\hat{\Phi}(z)= P^{-1}(z)D^{-1}(x,t)\Phi(z),
\end{equation}
we obtain a new Lax pair
\begin{equation}
\begin{aligned}
&\hat{\Phi}_{x}-q \Lambda(z) \hat{\Phi}=\hat{U} \hat{\Phi}, \\
&\hat{\Phi}_{t}+(u q \Lambda(z)-H(z)) \hat{\Phi}=\hat{V} \hat{\Phi},
\end{aligned}
\end{equation}
where
\begin{align}
& \Lambda(z)=\operatorname{diag}\left\{\lambda_1(z), \lambda_2(z), \lambda_3(z)\right\}, \quad H(z)=\frac{1}{3z^3}I+\Lambda^{-1}(z),\nonumber\\
& \hat{U}(z;x,t)=P^{-1}(z)\left(\begin{array}{ccc} \frac{q_x}{q}&0&0 \\ 0&0&0 \\ 0&\frac{1}{q}-q&-\frac{q_x}{q} \end{array}\right)P(z),\nonumber\\
& \hat{V}(z;x,t)=P^{-1}(z)\left(\left(\begin{array}{ccc} -u\frac{q_x}{q}&0&0 \\ \frac{u+1}{q}-1&0&0 \\ \frac{u_x}{q^2}&\frac{1}{q}-1+uq&u\frac{q_x}{q} \end{array}\right)+\frac{q^2-1}{z^3}\left(\begin{array}{ccc}0&0&1\\ 0&0&0 \\ 0&0&0 \end{array}\right)\right)P(z).\nonumber
\end{align}

Introducing
\begin{equation}
Q=y(x,t)\Lambda(z)+t H(z)
\end{equation}
with $y(x,t)=x-\int_{x}^{\infty}(q(\varsigma,t)-1)\mathrm{d}\varsigma$,  then
the matrix-valued function
\begin{equation}
M(z) =\hat{\Phi} (z) \mathrm{e}^{-Q}
\end{equation}
satisfies the system
\begin{equation}\label{MLie}
\begin{aligned}
&M_{x}-\left[Q_{x}, M\right]=\hat{U}M, \\
&M_{t}-\left[Q_{t}, M\right]=\hat{V}M,
\end{aligned}
\end{equation}
 which leads to  the Volterra  integral equation
\begin{equation}\label{INTM}
M( z)=I+\int_{ \pm \infty}^{x} \mathrm{e}^{Q(x, z)-Q(\varsigma, z)}[\hat{U} M(\varsigma, z)] \mathrm{e}^{-Q(x, z)+Q(\varsigma, z)} \mathrm{d}\varsigma. \end{equation}
Since $q>0$,  the   boundedness and decay  of   the exponential factor $\mathrm{e}^{Q(x, z)-Q(\varsigma, z)}$   is
 determined by the signs of $\operatorname{Re} ( \lambda_{i}(z)- \lambda_{j}(z)), 1\leqslant i \neq j \leqslant 3.$

For convenience, we introduce a new spectral parameter $k$ such that
\begin{equation}
z(k)=\frac{1}{\sqrt{3}} k\left(1+\frac{1}{k^{6}}\right)^{1/3},
\end{equation}
  then $z(k)\sim\frac{1}{\sqrt{3}}k$ as $k\rightarrow\infty$, and
\begin{equation}
\lambda_{j}(k)=\frac{1}{\sqrt{3}}\left(\omega^{j} k+\frac{1}{\omega^{j} k}\right).
\end{equation}
The contour $\Sigma=\left\{k\mid\operatorname{Re}\lambda_{i}(k)=\operatorname{Re}\lambda_{j}(k)\right.$ for some $\left.i\neq j\right\}$ consists of six rays
\begin{equation}
l_{j}=\mathbb{R}_{+}\mathrm{e}^{\frac{\pi}{3}\mathrm{i}(j-1)}, \quad j=1,\cdots,6,
\end{equation}
which divides  the $k$-plane into six sectors
\begin{equation}
D_{j}=\left\{k\mid\frac{\pi}{3}(j-1)<\arg k<\frac{\pi}{3}j\right\}, \quad j=1,\cdots,6.
\end{equation}

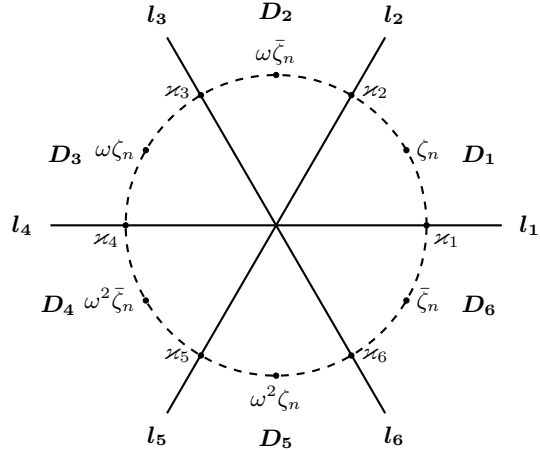
\begin{figure}[H]
\begin{center}
\begin{tikzpicture}[scale=0.5]
\draw [thick](-6,0)--(6,0);
\draw [thick](-2.9,-5)--(2.9,5);
\draw [thick](-2.9,5)--(2.9,-5);
\draw [thick, dashed] (0,0)circle(4cm);
\node  [right]  at (6.2,0) {\footnotesize$\bm{l_1}$};
\node  [left]  at (-6.2,0) {\footnotesize$\bm{l_4}$};
\node  [right]  at (2.6,5.6) {\footnotesize$\bm{l_2}$};
\node  [left]  at (-2.6,-5.6) {\footnotesize$\bm{l_5}$};
\node  [left]  at (-2.6,5.6) {\footnotesize$\bm{l_3}$};
\node  [right]  at (2.6,-5.6) {\footnotesize$\bm{l_6}$};
\filldraw[black] (4,0) circle [radius=0.07];
\filldraw[black] (2,3.464) circle [radius=0.07];
\filldraw[black] (-2,3.464) circle [radius=0.07];
\filldraw[black] (-4,0) circle [radius=0.07];
\filldraw[black] (-2,-3.464) circle [radius=0.07];
\filldraw[black] (2,-3.464) circle [radius=0.07];

\node  [right]  at (3.9,-0.4) {\footnotesize$\varkappa_1$};
\node  [right] at (2,3.55) {\footnotesize$\varkappa_2$};
\node  [left]  at (-2,3.55) {\footnotesize$\varkappa_3$};
\node  [left]  at (-3.9,-0.4) {\footnotesize$\varkappa_4$};
\node  [left]  at (-2,-3.464) {\footnotesize$\varkappa_5$};
\node  [right]  at (2,-3.464) {\footnotesize$\varkappa_6$};

\node  [below]  at (5.4,2.4) {\footnotesize$\bm{D_1}$};
\node  [below]  at (0,6.2) {\footnotesize$\bm{D_2}$};
\node  [below]  at (-5.6,2.4) {\footnotesize$\bm{D_3}$};
\node  [below]  at (-5.8,-1.6) {\footnotesize$\bm{D_4}$};
\node  [above]  at (0,-6.2) {\footnotesize$\bm{D_5}$};
\node  [below]  at (5.4,-1.6) {\footnotesize$\bm{D_6}$};

\filldraw[black] (3.464,2) circle [radius=0.07];
\filldraw[black] (0,4) circle [radius=0.07];
\filldraw[black] (-3.464,2) circle [radius=0.07];
\filldraw[black] (-3.464,-2) circle [radius=0.07];
\filldraw[black] (0,-4) circle [radius=0.07];
\filldraw[black] (3.464,-2) circle [radius=0.07];

\node  [right]  at (3.464,2) {\footnotesize$\zeta_n$};
\node  [above]  at (0,4) {\footnotesize$\omega\bar{\zeta}_n$};
\node  [left]  at (-3.464,2) {\footnotesize$\omega\zeta_n$};
\node  [left]  at (-3.464,-2) {\footnotesize$\omega^2\bar{\zeta}_n$};
\node  [below]  at (0,-4)       {\footnotesize$\omega^2\zeta_n$};
\node  [right]  at (3.464,-2) {\footnotesize$\bar{\zeta}_n$};

\end {tikzpicture}
\end{center}
\caption{The critical rays $l_j, \ j=1,\cdots,6$, analytical domains $D_j,\ j=1,\cdots,6$,  poles $ \omega^l\zeta_n, \omega^l\bar{\zeta}_n, \ n=1,\cdots,N,\ l=0,1,2,$  and singularity  points $\varkappa_j,\ j=1,\cdots,6$, in the $k$-plane.}
\label{analzone&spectrumsdis}
\end{figure}

In order to obtain an analytic matrix-valued solution from \eqref{INTM} in $\mathbb{C}\setminus\Sigma$, the initial points
of integration $\infty_{il}$ are specified for each matrix entry $(i,j)$, $1\leqslant i,j\leqslant3$ as follows
\begin{equation}
\infty_{ij}= \begin{cases}+\infty, & \text {if } {\rm Re}\lambda_{i}(k) \geqslant \operatorname{Re}\lambda_{j}(k), \\ -\infty, & \text { if } \operatorname{Re}\lambda_{i}(k)<{\rm Re} \lambda_{j}(k).\end{cases}
\end{equation}
We consider the system of Fredholm integral equations, for $1\leqslant i,j\leqslant3$,
\begin{equation}
M_{ij}(k;x,t)=I_{ij}+\int_{\infty_{ij}}^{x} \mathrm{e}^{-\lambda_{i}(k) \int_{x}^{\varsigma} q(\zeta, t) \mathrm{d}\zeta}\left[(\hat{U} M)_{ij}(\varsigma, t,k)\right] \mathrm{e}^{\lambda_{j}(k) \int_{x}^{\varsigma} q(\zeta,t) \mathrm{d}\zeta} \mathrm{d}\varsigma.
\end{equation}
Moreover, it can be shown that $M(k):=( M_{ij}(k;x,t))_{3\times 3}$ satisfies the following symmetry relations \cite{AD2}.
\begin{proposition} \label{msym}
\begin{align}
M(k)=\Gamma_1\overline{M(\bar{k})}\Gamma_1=\Gamma_2\overline{M(\omega^2\bar{k})}\Gamma_2=\Gamma_3\overline{M(\omega\bar{k})}
\Gamma_3=\Gamma_4M(\omega k)\Gamma_4^{-1}=\overline{M(\bar{k}^{-1})}, \label{S1}
\end{align}
where
\begin{equation}\label{Gamma}
    \Gamma_1=\left(\begin{array}{ccc} 0 & 1 & 0 \\ 1 & 0 & 0 \\ 0 & 0 & 1\end{array}\right),\
    \Gamma_2=\left(\begin{array}{ccc} 0 & 0 & 1 \\ 0 & 1 & 0 \\ 1 & 0 & 0\end{array}\right), \
    \Gamma_3=\left(\begin{array}{ccc} 1 & 0 & 0 \\ 0 & 0 & 1 \\ 0 & 1 & 0 \end{array}\right), \
    \Gamma_4=\left(\begin{array}{ccc} 0 & 0 & 1 \\ 1 & 0 & 0 \\ 0 & 1 & 0 \\ \end{array}\right).
\end{equation}
\end{proposition}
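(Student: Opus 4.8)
The plan is to derive all five identities in \eqref{S1} from a single principle: the matrix $M(k)$ is characterized as the unique solution of the entrywise Fredholm integral equations (equivalently, of the Lax system \eqref{MLie} / the Volterra equation \eqref{INTM}) subject to the normalization $M\to I$ and to the prescribed choice of initial integration points $\infty_{ij}$. For each spectral symmetry $k\mapsto\sigma(k)$ appearing in the statement, I would build from $M$ a candidate $\widetilde M(k)$, namely the right-hand side of the corresponding relation, show that $\widetilde M$ solves the \emph{same} integral equation with the same normalization and the same integration limits, and conclude $\widetilde M=M$ by uniqueness.

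First I would record the elementary symmetries of the eigenvalues. From $\lambda_j(k)=\tfrac{1}{\sqrt3}(\omega^{j}k+\omega^{-j}k^{-1})$ one reads off, with indices modulo $3$,
\[
\lambda_j(\omega k)=\lambda_{j+1}(k),\qquad \overline{\lambda_j(\bar k)}=\lambda_{-j}(k),\qquad \lambda_j(k^{-1})=\lambda_{-j}(k),
\]
so that $k\mapsto\omega k$ cyclically shifts $(\lambda_1,\lambda_2,\lambda_3)\mapsto(\lambda_2,\lambda_3,\lambda_1)$, the three conjugation maps $k\mapsto\bar k,\ \omega^2\bar k,\ \omega\bar k$ realize the three transpositions $\lambda_1\!\leftrightarrow\!\lambda_2$, $\lambda_1\!\leftrightarrow\!\lambda_3$, $\lambda_2\!\leftrightarrow\!\lambda_3$, and $k\mapsto\bar k^{-1}$ composed with conjugation fixes each $\lambda_j$. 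These permutations are exactly the ones implemented by conjugation with the matrices in \eqref{Gamma}: $\Gamma_1,\Gamma_2,\Gamma_3$ are the transpositions $(12),(13),(23)$ and $\Gamma_4$ is the $3$-cycle. I would then check that $P(z)$ and $\Lambda(z)$ transform covariantly, e.g.\ $\Lambda(\omega k)=\Gamma_4^{-1}\Lambda(k)\Gamma_4$, $\overline{\Lambda(\bar k)}=\Gamma_1\Lambda(k)\Gamma_1$, and $P(z(\omega k))=P(z(k))\Gamma_4$, while $z(k)^3$ (hence $q$, $y$, and therefore $Q$) is invariant up to the corresponding permutation.

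Next I would propagate these symmetries to the potentials. Since the original Lax matrices $U,V$ in \eqref{lax pair} are real and depend on the spectral variable only through $z^{\pm3}$, the gauge-transformed matrices inherit companion relations such as $\overline{\hat U(\bar k)}=\Gamma_1\hat U(k)\Gamma_1$ and $\hat U(\omega k)=\Gamma_4^{-1}\hat U(k)\Gamma_4$, and likewise for $\hat V$ and for the inversion symmetry. Substituting $\widetilde M(k)$ into \eqref{INTM}, these relations convert the integral equation for $\widetilde M$ into the integral equation for $M$ at the transformed argument; the only remaining point is to match the entrywise integration limits, i.e.\ to verify that the permutation induced by $\Gamma$ carries $\infty_{ij}$ to $\infty_{\sigma(i)\sigma(j)}$. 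Because $\infty_{ij}$ is determined purely by the sign of $\operatorname{Re}(\lambda_i-\lambda_j)$ and these permutations preserve the sign data (using $\operatorname{Re}\overline{\lambda_i(\bar k)}=\operatorname{Re}\lambda_{-i}(k)$ for the conjugation maps), the limits are consistent and uniqueness yields each identity.

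I expect the main obstacle to be precisely this bookkeeping of the piecewise integration limits together with the behaviour of the analytic continuation across the six rays $l_j$, where the candidate $\widetilde M$ must be shown to have jumps compatible with those of $M$. The inversion symmetry $M(k)=\overline{M(\bar k^{-1})}$ is the most delicate, since $k\mapsto\bar k^{-1}$ interchanges the regimes $k\to0$ and $k\to\infty$ and so requires control of $M$ near the singular points $\varkappa_j$ and preservation of the normalization under this exchange. Once the cyclic relation (with $\Gamma_4$), one conjugation relation (with $\Gamma_1$), and the inversion relation are established, the remaining two relations (with $\Gamma_2,\Gamma_3$) follow by composition, since the rotation and a single conjugation generate the full $S_3$-action on $\{\lambda_1,\lambda_2,\lambda_3\}$; for instance $\Gamma_1\Gamma_4^2\Gamma_2=I$ shows that the $\Gamma_2$-relation is the product of the $\Gamma_4$- and $\Gamma_1$-relations.
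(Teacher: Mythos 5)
The paper itself gives no proof of Proposition \ref{msym}: the symmetries \eqref{S1} are stated as a known fact and quoted directly from the reference \cite{AD2}, so there is no internal argument to compare yours against. Your proposal is correct, and it is essentially the standard argument used in that reference: the eigenvalue symmetries $\lambda_j(\omega k)=\lambda_{j+1}(k)$, $\overline{\lambda_j(\bar k)}=\lambda_{-j}(k)$, $\lambda_j(k^{-1})=\lambda_{-j}(k)$ induce an $S_3$-action matching conjugation by the matrices in \eqref{Gamma}; the coefficients $\hat U,\hat V,Q$ transform covariantly because $U,V$ are real and depend on $k$ only through $z^{\pm3}$, which is invariant under $k\mapsto\omega k$ and $k\mapsto k^{-1}$ and conjugates under $k\mapsto\bar k$; and since the induced permutations preserve the sign data $\operatorname{Re}(\lambda_i-\lambda_j)$, each candidate $\widetilde M$ solves the same system \eqref{INTM} with the same entrywise limits $\infty_{ij}$, whence equality by uniqueness. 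Your group-theoretic bookkeeping also checks out ($\Gamma_4\Gamma_1=\Gamma_2$, $\Gamma_4^2\Gamma_1=\Gamma_3$, so the $\Gamma_2$- and $\Gamma_3$-relations do follow from the $\Gamma_4$- and $\Gamma_1$-relations). One small remark: the inversion relation $M(k)=\overline{M(\bar k^{-1})}$ is less delicate than you anticipate, because uniqueness is invoked at each \emph{fixed} $k$ for the integral equation in $x$ (the normalization is in the spatial variable, via $\infty_{ij}$, not at $k=\infty$); since $\overline{\lambda_j(\bar k^{-1})}=\lambda_j(k)$ fixes every eigenvalue, this relation holds entrywise with no permutation matrix and no separate control near the points $\varkappa_j$ is needed.
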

The limiting values of $M(k)$ satisfies the jump relation
\begin{equation}\label{jump0}
M_+(k)=M_-(k)e^{Q}V_0(k)e^{-Q}, \quad k\in\Sigma,
\end{equation}
where $V_0(k)$ is determined by the initial value $u_0$. Take $k\in\mathbb{R}^{\pm}$ as an example, $V_0(k)$ has  a special matrix structure
\begin{equation*}
V_0(k)=\left(\begin{array}{ccc} 1 & \bar{r}_{\pm}(k) & 0 \\ -r_{\pm}(k) & 1-|r_{\pm}(k)|^2 & 0 \\ 0 & 0 & 1\end{array}\right),
\end{equation*}
$r_{\pm}(k)$ are  scalar functions with $r_{\pm}(k)=\mathcal{O}(k^{-1})$ as $k\to\infty$, which together with the symmetry $r_{\pm}(k)=\overline{r_{\pm}(\bar{k}^{-1})}$ leads to $\lim\limits_{k\to 0}r_{\pm}(k)=0$. Naturally, we define the reflection coefficient as
\begin{align*}
	r(k)=\left\{ \begin{array}{ll}
		r_\pm(k),   & k\in \mathbb{R}^\pm,\\
		0  , & k=0.
	\end{array}\right.
\end{align*}
Moreover, a standard approach \cite{pa1} gives that as the initial data $u_0(x) \in \mathcal{S}(\mathbb{R})$, $r(k) \in\mathcal{S}(\mathbb{R})$.
According to \cite{AD2}, $M(k)$ has at most a finite number of simple poles lying in $\{k\in \mathbb{C}: |k|=1\}$. Note that there are two types of poles in $D_1\cap\{k\in \mathbb{C}: |k|=1\}$, we denote them as $k_j
 \ (j=1,\cdots,N_1)$ and $  k_l^A \ (l=1,\cdots,N_1^A)$ respectively. Let $N=N_1+N_1^A$ and define indicator sets as
\begin{equation}\label{index set}
\widetilde{\mathcal{N}}=\{1,\cdots,N_1\}, \quad \widetilde{\mathcal{N}}^A=\{N_1+1,\cdots,N\}, \quad \mathcal{N}=\widetilde{\mathcal{N}}\cup\widetilde{\mathcal{N}}^A.
\end{equation}
 Furthermore, denote
 \begin{align*}
	\left\{\begin{aligned}
		&\zeta_j=k_j, \quad \quad \ \ j\in\widetilde{\mathcal{N}},\\
		&\zeta_{l+N_1}=k_l^A, \quad l\in\widetilde{\mathcal{N}}^A,
	\end{aligned}\right.
\end{align*}
then for $\zeta_n, \ n\in\mathcal{N}$, it is easy to know that $\omega\bar{\zeta}_n, \ \omega\zeta_n, \ \omega^2\bar{\zeta}_n, \ \omega^2\zeta_n, \ \bar{\zeta}_n$ are also poles of $M(k)$ according to the symmetries \eqref{S1}. Correspondingly, denote
\begin{align*}
&\zeta_{n+N}=\omega\bar{\zeta}_n, \ \zeta_{n+2N}=\omega\zeta_n, \ \zeta_{n+3N}=\omega^2\bar{\zeta}_n, \ \zeta_{n+4N}=\omega^2\zeta_n, \ \zeta_{n+5N}=\bar{\zeta}_n.
\end{align*}
To sum up, the discrete spectrum can be defined as
\begin{equation}
\mathcal{K}=\{\zeta_n\}_{n=1}^{6N},
\end{equation}
whose distribution on the $k$-plane is shown in Figure \ref{analzone&spectrumsdis}.

Now, to establish the associated RH problem, we consider the jump relation \eqref{jump0}. For $i,j=1,2,3$, define
\begin{equation}
\theta_{ij}(k):=\theta_{ij}(k;\hat{\xi})=-\mathrm{i}\left[\hat{\xi}\left(\lambda_{i}(k)-\lambda_{j}(k)\right)+\left(\frac{1}{\lambda_{i}(k)}-\frac{1}{\lambda_{j}(k)}\right)\right],
\end{equation}
where $\hat{\xi}:=\frac{y}{t}$ and in Subsection \ref{proof1} we will prove
$$\hat{\xi} \sim\frac{x}{t}:=\xi,\  \ {\rm as}\ \  t \to \infty. $$
Specifically,
\begin{equation}\label{oritheta12}
\theta_{12}(k)=\left(k-\frac{1}{k}\right)\left[\hat{\xi}-\frac{3}{k^{2}-1+k^{-2}}\right],
\end{equation}
and
\begin{equation}
\theta_{13}(k)=-\theta_{12}(\omega^2k), \quad \theta_{23}(k)=\theta_{12}(\omega k).
\end{equation}
Therefore, we have the following RH problem.
\begin{RHP}\label{rhpM}
Find a $3\times 3$ matrix-valued function $M(k):= M(k;y,t)$ such that
\begin{itemize}
\item Analyticity: $M(k)$ is meromorphic in $\mathbb{C}\setminus\Sigma$, where $\Sigma=\mathbb{R}\cup\omega\mathbb{R}\cup\omega^2\mathbb{R}$.
\item Jump relation: $M_{+}(k)=M_{-}(k)V(k)$, $k\in \Sigma$, where
\begin{align}\label{jumpp1}
   V(k)=\left\{
   \begin{aligned}
    &\left(\begin{array}{ccc} 1 & \bar{r}(k) e^{\mathrm{i}t\theta_{12}(k)} & 0 \\ -r(k)e^{-\mathrm{i}t\theta_{12}(k)} & 1-|r(k)|^2 & 0 \\ 0 & 0 & 1 \end{array}\right):= V_0(k), \quad k\in\mathbb{R},\\
    &\Gamma_4^2V_0(\omega^2k)\Gamma_4^{-2}, \quad k\in\omega\mathbb{R},\\
    &\Gamma_4V_0(\omega k)\Gamma_4^{-1}, \quad k\in\omega^2 \mathbb{R}.\\
   \end{aligned}
        \right.
\end{align}
\item Asymptotic behaviors:
\begin{equation}
M(k)=I+\mathcal{O}(k^{-1}), \quad  k\rightarrow\infty.
\end{equation}
\item Singularities: $M(k)$ has singularity at $\varkappa_{\nu}, \ \nu=1,\cdots,6$ with
\begin{align}\label{resp1}
   M(k)=\left\{
   \begin{aligned}
    &M_{\pm 1}(k)+\mathcal{O}(1), \quad k\rightarrow\pm1,\\
    &\Gamma_4^2M_{\pm 1}(\omega^2k)\Gamma_4^{-2}+\mathcal{O}(1), \quad k\rightarrow\pm\omega,\\
    &\Gamma_4M_{\pm 1}(\omega k)\Gamma_4^{-1}+\mathcal{O}(1), \quad k\rightarrow\pm\omega^2,\\
   \end{aligned}
        \right.
\end{align}
where
\begin{equation*}
M_{\pm 1}(k)=\frac{1}{k\mp 1}\left(\begin{array}{ccc} \alpha_{\pm} & \alpha_{\pm}  & \beta_{\pm} \\ -\alpha_{\pm} & -\alpha_{\pm}  & -\beta_{\pm} \\ 0 & 0 & 0 \end{array}\right),
\end{equation*}
and $\alpha_{\pm}=-\bar{\alpha}_{\pm}, \ \beta_{\pm}=-\bar{\beta}_{\pm}$.
\item Residue conditions: For $\zeta_n\in D_1\cap\mathcal{K}$,
\begin{equation}\label{resM11}
\begin{aligned}
&\res\limits_{k=\zeta_n} M(k)=\lim_{k\rightarrow \zeta_n}M(k)B_n,\\
&\res\limits_{k=\omega \bar{\zeta}_n}M(k)=\lim_{k\rightarrow \omega\bar{\zeta}_n}M(k)\Gamma_3(\omega \bar{B}_n)\Gamma_3:=\lim_{k\rightarrow \omega\bar{\zeta}_n}M(k)B_{n+N},\\
&\res\limits_{k=\omega \zeta_n}M(k)=\lim_{k\rightarrow \omega \zeta_n}M(k)\Gamma_4^2(\omega B_n)\Gamma_4^{-2}:=\lim_{k\rightarrow \omega \zeta_n}M(k)B_{n+2N},\\
&\res\limits_{k=\omega^2\bar{\zeta}_n}M(k)=\lim_{k\rightarrow\omega^2\bar{\zeta}_n}M(k)\Gamma_2(\omega^2 \bar{B}_n)\Gamma_2:=\lim_{k\rightarrow \omega^2\bar{\zeta}_n}M(k)B_{n+3N},\\
&\res\limits_{k=\omega^2\zeta_n}M(k)=\lim_{k\rightarrow\omega^2\zeta_n}M(k)\Gamma_4(\omega^2 B_n)\Gamma_4^{-1}:=\lim_{k\rightarrow \omega^2\zeta_n}M(k)B_{n+4N},\\
&\res\limits_{k=\bar{\zeta}_n} M(k)=\lim_{k\rightarrow \bar{\zeta}_n}M(k)\Gamma_1\bar{B}_n\Gamma_1:=\lim_{k\rightarrow \bar{\zeta}_n}M(k)B_{n+5N},
\end{aligned}
\end{equation}
where
\begin{equation}\label{res22}
   B_n=\left\{
   \begin{aligned}
    &\left(\begin{array}{ccc} 0 & -c_ne^{\mathrm{i}t\theta_{12}(\zeta_n)} & 0 \\ 0 & 0 & 0 \\ 0 & 0 & 0 \end{array}\right), \quad n=1,\cdots,N_1,\\
    &\left(\begin{array}{ccc} 0 & 0 & 0 \\ 0 & 0 & -c_ne^{\mathrm{i}t\theta_{23}(\zeta_n)} \\ 0 & 0 & 0 \end{array}\right), \quad n=N_1+1,\cdots,N.\\
   \end{aligned}
        \right.
\end{equation}
\end{itemize}
\end{RHP}

Particularly, introduce a vector-valued function
\begin{equation}\label{mtov}
m(k):=\begin{pmatrix} m_1(k)& m_2(k)& m_3(k) \end{pmatrix}= \begin{pmatrix} 1& 1& 1 \end{pmatrix}M(k),
\end{equation}
which can be viewed as a transformation from the $3\times 3$ matrix RH problem to the $1\times 3$ vector RH problem,
which suppresses the singularities at $\varkappa_j, \ j=1,\cdots,6$, and leads to the following vector-valued RH problem.
\begin{RHP}\label{rhpvm}
Find a  row vector-valued function $m(k):= m(k;y,t)$ such that
\begin{itemize}
\item  $m(k)$ is meromorphic in $\mathbb{C}\setminus\Sigma$.
\item For  $k\in \Sigma$, we have $m_{+}(k)=m_{-}(k)V(k)$, where $V(k)$ is given by \eqref{jumpp1}.
\item  As $k \to \infty$ in $\mathbb{C}\setminus \Sigma$, $m(k)=\begin{pmatrix}1& 1& 1\end{pmatrix}+\mathcal{O}(k^{-1})$.
\item $m(k)$ has the same form of residue conditions as $M(k)$ in RH problem \ref{rhpM}.
\end{itemize}
\end{RHP}
It follows that the solution  for the DP equation \eqref{DP} can be expressed in the following parametric form:
\begin{equation}\label{rescon}
\begin{aligned}
&u(y,t)=\frac{\partial}{\partial t}\log{\frac{m_2}{m_1}}(e^{\frac{\pi}{6}\mathrm{i}};y,t),\\
&x(y,t)=y+\log{\frac{m_2}{m_1}}(e^{\frac{\pi}{6}\mathrm{i}};y,t).
\end{aligned}
\end{equation}

\subsection{A regular RH problem}\label{conju}

In order to perform the long-time analysis via the $\bar{\partial}$-steepest descent method, we need to construct a regular RH problem by the following  two essential operations:
\begin{itemize}
	\item[(i)] Decompose the jump matrix $V(k)$ into appropriate upper/lower triangular factorizations so that the oscillating factor $e^{\pm2\mathrm{i}\theta_{12}(k)}$ are decaying in the corresponding sectors respectively;
	\item[(ii)]  Interpolate the poles by trading them for jumps along small closed loops enclosing
	each pole \cite{CJ}.
\end{itemize}
The operation (i) is aided by two well known factorizations of the jump matrix
\begin{equation}\label{facjum1}
	V(k)=b(k)^{-\dag}b(k)=B(k)T_0(k)B(k)^{-\dag}, \quad k\in\mathbb{R},
\end{equation}
where
\begin{align*}
	&b(k)^{-\dag}=\left(\begin{array}{ccc}  1 & 0 & 0 \\  -r(k)e^{-\mathrm{i}t\theta_{12}(k)} & 1 & 0 \\ 0 & 0 & 1 \end{array}\right), \quad b(k)=\left(\begin{array}{ccc}  1 & \bar{r}(k)e^{\mathrm{i}t\theta_{12}(k)} & 0 \\  0 & 1 & 0 \\ 0 & 0 &1 \end{array}\right),\\
	&B(k)^{-\dag}=\left(\begin{array}{ccc}  1 & 0 & 0 \\ -\frac{r(k)}{1-|r(k)|^2}e^{-\mathrm{i}t\theta_{12}(k)} & 1 & 0 \\ 0 & 0 & 1 \end{array}\right), \quad B(k)=\left(\begin{array}{ccc}  1 & \frac{\bar{r}(k)}{1-|r(k)|^2}e^{\mathrm{i}t\theta_{12}(k)} & 0 \\  0 & 1 & 0 \\ 0 & 0 & 1 \end{array}\right),\\
	&T_0(k)=\left(\begin{array}{ccc}  \frac{1}{1-|r(k)|^2} & 0 & 0 \\ 0 & 1-|r(k)|^2 & 0 \\ 0 & 0 & 1 \end{array}\right).
\end{align*}

To remove the diagonal matrix in the middle of the second factorization,  on $\mathbb{R}$, we  define
\begin{align}
&I(\hat{\xi})=\left\{
\begin{aligned}
	&\mathbb{R}, \quad  \hat{\xi} \in \mathcal{T}_1,\\
	&\emptyset, \quad \hat{\xi} \in \mathcal{T}_2,
\end{aligned}
\right.
\end{align}
then on $\omega \mathbb{R}, \ \omega^2 \mathbb{R}$,
\begin{align}
&\omega I(\hat{\xi})=\left\{\omega k: k\in I(\hat{\xi})\right\}, \quad \omega^2 I(\hat{\xi})=\left\{\omega^2 k: k\in I(\hat{\xi})\right\}.
\end{align}
Further, we introduce a scalar RH problem, which satisfies
\begin{RHP}\label{scalarrhp}
	Find a function $\delta(k):=\delta(k;\hat{\xi})$ satisfying the following properties:
	\begin{itemize}
		\item  $\delta(k)$ is analytical in $\mathbb{C}\setminus \mathbb{R}$.
		\item For $k \in \mathbb{R}$, we have
		\begin{equation}
		\begin{split}
		& \delta_{+}(k)=\delta_{-}(k)(1-|r(k)|^2), \quad k\in I(\hat{\xi}),\\
		& \delta_{+}(k)=\delta_{-}(k), \quad k\in\mathbb{R}\setminus I(\hat{\xi}).
		\end{split}
		\end{equation}
		\item As $k \to \infty$ in $\mathbb{C}\setminus \mathbb{R}$, $\delta(k)\rightarrow 1$.
	\end{itemize}
\end{RHP}
Utilizing the Plemelj's formula,  RH problem \ref{scalarrhp} admits a unique  solution
\begin{equation}
	\delta(k)=\exp\left(-\mathrm{i}\int_{I(\hat{\xi})}\frac{\nu(s)}{s-k}ds\right),
\end{equation}
with $\nu(k)=-\frac{1}{2\pi}\log{(1-|r(k)|^2)}$.

Now, we focus on operation (ii), our method for dealing with the poles in the RH problem follows the ideas in \cite{CJ}. We observe that on the unit circle the phase function appearing in the residue conditions \eqref{resM11} satisfies
\begin{equation}\label{cri}
	\operatorname{Im}\theta_{12}(\zeta_n)=2\sin{\phi_n}\left(\hat{\xi}-\frac{3}{4\cos^2{\phi_n}-3}\right)
\end{equation}
with $\zeta_n=e^{\mathrm{i}\phi_n}$.

Denote
the critical line
$$\operatorname{Re}k=L(\hat{\xi}):=\frac{\sqrt{3}}{2}\sqrt{1+1/\hat{\xi}},$$
then define
\begin{align}
&\Delta_{1}=\left\{j\in\widetilde{\mathcal{N}}: {\rm Re}{\zeta_j}<L(\hat{\xi})\right\}, \ \Delta_{2}=\left\{l\in\widetilde{\mathcal{N}}^A: {\rm Re}{\zeta_l}<L(\hat{\xi})\right\},
 \ \Delta=\Delta_1\cup\Delta_2,\nonumber\\
&\nabla_{1}=\left\{j\in\widetilde{\mathcal{N}}: {\rm Re}{\zeta_j}>L(\hat{\xi})\right\},
\ \nabla_{2}=\left\{l\in\widetilde{\mathcal{N}}^A: {\rm Re}{\zeta_l}>L(\hat{\xi})\right\},\ \nabla=\nabla_1\cup\nabla_2.\nonumber
\end{align}

To carry out operation (ii), define
\begin{equation}\label{T}
	\begin{aligned}
		& T_1(k)=\frac{H(\omega^2k)}{H(k)}, \quad T_2(k)=\frac{H(k)}{H(\omega k)},  \quad T_3(k)=\frac{H(\omega k)}{H(\omega^2k)},
	\end{aligned}
\end{equation}
where
\begin{equation}
	H(k)=\underset{j\in\Delta_1}\prod\frac{k-\zeta_j}{k-\bar{\zeta}_j}\underset{l\in\Delta_2}\prod\frac{k-\omega\zeta_l}{k-\omega^2\bar{\zeta}_l}\delta(k;\hat{\xi})^{-1},
\end{equation}
and
\begin{align}
	\varrho=\frac{1}{4}\min&\left\lbrace \min_{n\in \mathcal{N}} |\operatorname{Im}\zeta_n| ,\min_{n\in \mathcal{N},\arg k=\frac{\pi }{3}\mathrm{i}}|\zeta_n-k|,\min_{n\in
		\mathcal{N}\setminus\Lambda,\operatorname{Im}\theta_{12}(k)=0}|\zeta_n-k|,\right.\nonumber\\
	&\left. \min_{n\in\mathcal{N}}|\zeta_n-e^{\frac{\pi}{6}\mathrm{i}}|,\min_{n\neq m\in \mathcal{N}}|\zeta_n-\zeta_m|\right\rbrace .
\end{align}
Then the small disks $\mathbb{D}_n:=\mathbb{D}(\zeta_n,\varrho)$ are pairwise disjoint, also disjoint with critical lines and the contours. Moreover, $e^{\frac{\pi}{6}\mathrm{i}}\notin \mathbb{D}_n$.

Let
\begin{equation}\label{defT}
	T(k)=\text{diag}\{T_1(k),T_2(k),T_3(k)\},
\end{equation}
and for $n=1,\cdots,6N$, define
\begin{align}\label{G}
	G(k)=\left\{
	\begin{aligned}
		&I-\frac{B_n}{k-\zeta_n}, \quad k\in\mathbb{D}_n, \ n-k_0N\in\nabla, \ k_0\in\{0,\cdots,5\},\\
		&\begin{pmatrix} 1& 0& 0 \\-\frac{k-\zeta_n}{C_{n}e^{\mathrm{i}t\theta_{12}(\zeta_{n})}}& 1 & 0 \\ 0&	0 & 1 \end{pmatrix}, \quad k\in\mathbb{D}_n,n\in\Delta_1 \text{ or }n-2N\in\Delta_2,\\
		&\begin{pmatrix} 1&	0 & 0 \\ 0& 1 & 0\\ 0& -\frac{k-\zeta_n}{C_{n}e^{\mathrm{i}t\theta_{13}(\zeta_{n})}} & 1 \end{pmatrix}, \quad k\in\mathbb{D}_n,n-N\in\Delta_1 \text{ or }n-5N\in\Delta_2,\\
		&\begin{pmatrix} 1&	0 & -\frac{k-\zeta_n}{C_{n}e^{-\mathrm{i}t\theta_{13}(\zeta_{n})}} \\ 0& 1 & 0\\ 0&	0 & 1\end{pmatrix}, \quad k\in\mathbb{D}_n,n-2N\in\Delta_1 \text{ or }n-4N\in\Delta_2,\\
		&\begin{pmatrix} 1&	0 &  0\\ 0& 1 &  -\frac{k-\zeta_n}{C_{n}e^{-\mathrm{i}t\theta_{23}(\zeta_{n})}}\\ 0&	0 & 1\end{pmatrix}, \quad k\in\mathbb{D}_n,n-3N\in\Delta_1 \text{ or }n-N\in\Delta_2,\\
		&\begin{pmatrix} 1&	0 & 0 \\ 0& 1 & 0 \\ 0 &-\frac{k-\zeta_n}{C_{n}e^{\mathrm{i}t\theta_{23}(\zeta_{n})}} & 1 \end{pmatrix}, \quad k\in\mathbb{D}_n,n-4N\in\Delta_1 \text{ or }n\in\Delta_2,\\
		&\begin{pmatrix} 1&	-\frac{k-\zeta_n}{C_{n}e^{-\mathrm{i}t\theta_{12}(\zeta_{n})}} & 0 \\ 0& 1 & 0\\ 0&	0 & 1 \end{pmatrix}, \quad k\in\mathbb{D}_n,n-5N\in\Delta_1\text{ or } n-3N\in\Delta_2,\\
		&I, \quad elsewhere.
	\end{aligned}
	\right.
\end{align}
Denote
\begin{equation}
	\Sigma^{(1)}=\Sigma\cup\Sigma^{\mathrm{C}}, \quad \Sigma^{\mathrm{C}}= \mathop{\cup}\limits_{n=1}^{6N}\partial\mathbb{D}_n.
\end{equation}
 Here, $\mathbb{R}$ is oriented left-to-right and the directions on $\omega\mathbb{R}$ and $ \omega^2\mathbb{R}$ are determined by rotations of $\mathbb{R}$. Moreover,
  the disk boundaries are oriented counterclockwise in $\mathrm{D}_{2\nu-1}$ and clockwise in $\mathrm{D}_{2\nu}, \ \nu=1,2,3$.

We make the transformation
\begin{equation}\label{defM1}
	m^{(1)}(k)=m(k)G(k)T(k),
\end{equation}
which satisfies the following RH problem.

\begin{RHP}\label{rhp1}
	Find a $1\times 3$  vector-valued function $m^{(1)}(k):=m^{(1)}(k;y,t)$ such that
	\begin{itemize}
		\item $m^{(1)}(k)$ is holomorphic  in $\mathbb{C}\setminus\Sigma^{(1)}$.
		\item For $k \in \Sigma^{(1)}$, we have
		\begin{equation}
			m_+^{(1)}(k)=m_-^{(1)}(k)V^{(1)}(k),
		\end{equation}
		where
		\begin{align}\label{rhp1jump}
			V^{(1)}(k)=
			& \left\{
			\begin{aligned}
				&T^{-1}(k) \Gamma^j_4 b^{-\dag}(\omega^j k)b(\omega^j k) \Gamma^{-j}_4T(k), \quad k\in  \omega^j \mathbb{R}\setminus  \omega^j I(\hat{\xi}),\ j=0,1,2,\\
				  &T_{-}^{-1}(k) \Gamma^j_4 B(\omega^j k) T_0(\omega^jk)B^{-\dag}(\omega^j k) \Gamma^{-j}_4 T_{+}(k), \quad k\in \omega^j I(\hat{\xi}),\ j=0,1,2,\\
				&T^{-1}(k)G(k)T(k), \quad k\in\partial\mathbb{D}_n\cap\Big(\mathop{\cup}\limits_{\nu=1,2,3} \mathrm{D}_{2\nu-1}\Big),\\
				&T^{-1}(k)G^{-1}(k)T(k), \quad k\in\partial\mathbb{D}_n\cap\Big(\mathop{\cup}\limits_{\nu=1,2,3} \mathrm{D}_{2\nu}\Big).
			\end{aligned}
			\right.
		\end{align}
		\item As $k \to \infty$ in $\mathbb{C}\setminus \Sigma^{(1)}$, $	m^{(1)}(k)=\begin{pmatrix} 1& 1& 1 \end{pmatrix} +\mathcal{O}(k^{-1}).$
	\end{itemize}
\end{RHP}

By \eqref{rhp1jump}, it is readily seen that $V^{(1)}(z) \to I$ as $t \to \infty$ for $z \in \Sigma^{\mathrm{C}}$ exponentially fast. Thus, RH problem
\ref{rhp1} is asymptotically equivalent to the following RH problem. 

\begin{RHP}\label{rhp2}
Find a $1\times 3$  vector-valued function $m^{(2)}(k):=m^{(2)}(k;y,t)$ such that
\begin{itemize}
\item $m^{(2)}(k)$ is  holomorphic  in $\mathbb{C}\setminus \Sigma$.
\item For $k \in \Sigma$, we have
\begin{equation}
m_+^{(2)}(k)=m_-^{(2)}(k)V^{(2)}(k),
\end{equation}
where for $j=0,1,2,$
\begin{align}\label{rhp2jump}
V^{(2)}(k)=
& \left\{
\begin{aligned}
&T^{-1}(k) \Gamma^j_4 b^{-\dag}(\omega^j k)b(\omega^j k) \Gamma^{-j}_4T(k), \quad k\in  \omega^j \mathbb{R}\setminus  \omega^j I(\hat{\xi}),\\
&T_{-}^{-1}(k) \Gamma^j_4 B(\omega^j k) T_0(\omega^jk)B^{-\dag}(\omega^j k) \Gamma^{-j}_4 T_{+}(k), \quad k\in \omega^j I(\hat{\xi}).
\end{aligned}
\right.
\end{align}
\item As $k \to \infty$  in $\mathbb{C}\setminus \Sigma$, $	m^{(2)}(k)=\begin{pmatrix} 1& 1& 1 \end{pmatrix}+\mathcal{O}(k^{-1}).$
\end{itemize}
\end{RHP}

It can be shown that
\begin{equation}\label{defm2}
	m^{(1)}(k) = 	m^{(2)}(k) (I +\mathcal{O}(e^{-ct}) ),
\end{equation}
where $c>0$ is a constant.
Next, we will perform the asymptotic analysis in different transition zones based on RH Problem  \ref{rhp2} and finally  complete  the proof of Theorem \ref{th1}.

\section{Painlev\'e Asymptotics  in Transition Zone $\mathcal{T}_{1}$}\label{7}


In this section,  we  study the Painlev\'e asymptotics in  the transition  zone   $\mathcal{T}_{1}$  between $\mathcal{S}_1$ (See Figure \ref{figure0}) and $\mathcal{Z}_1$ (See Figure \ref{figurea}),
whose critical case corresponds to  Figure \ref{figureb}.
We only provide a detailed analysis in
 $\mathcal{T}_1^R=\mathcal{T}_1\cap\{(y,t):  \hat{\xi}\geq -\frac{3}{8}\}$
in this section, as the discussion for the other half zone is similar.

\begin{figure}[htbp]
\centering
\subfigure[$\hat{\xi}<-\frac{3}{8}$]{\label{figure0}
	\begin{minipage}[t]{0.32\linewidth}
	\centering
	\includegraphics[width=1.52in]{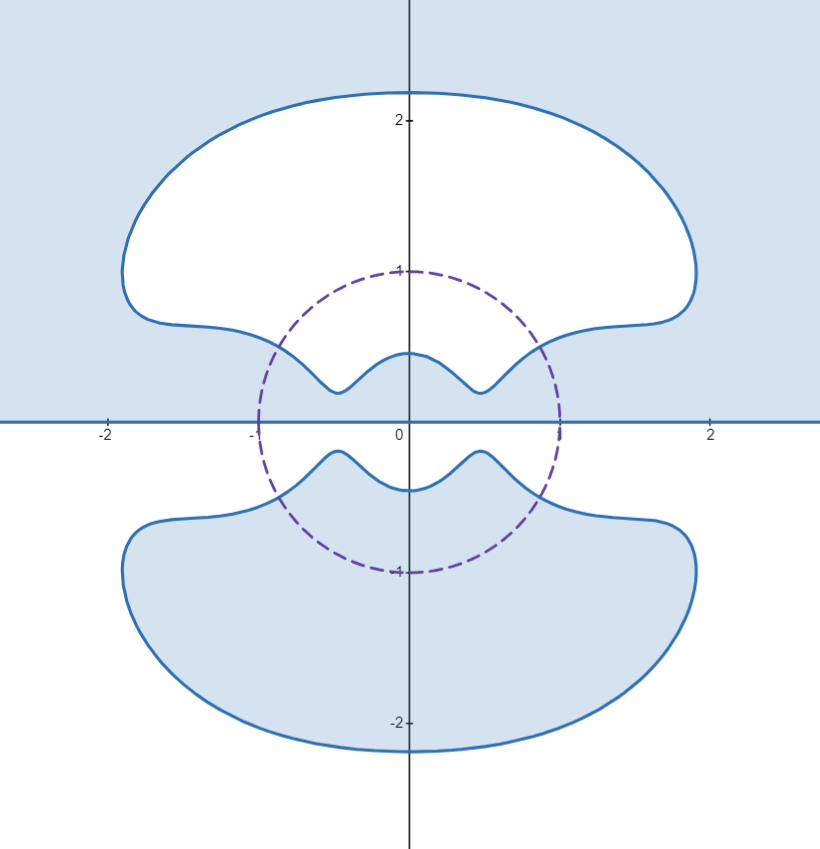}
	\end{minipage}%
}%
\subfigure[$\hat{\xi}=\frac{3}{8}$]{\label{figurea}
	\begin{minipage}[t]{0.32\linewidth}
	\centering
	\includegraphics[width=1.52in]{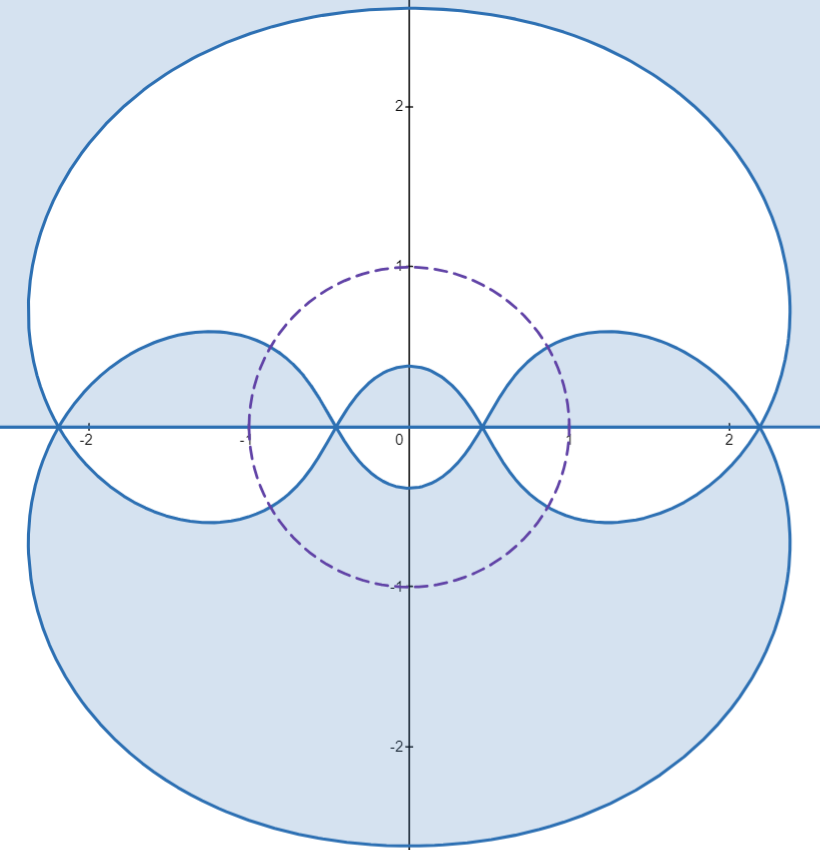}
	\end{minipage}%
}%
\subfigure[$-\frac{3}{8}<\hat{\xi}<0$]{\label{figureb}
	\begin{minipage}[t]{0.32\linewidth}
	\centering
	\includegraphics[width=1.52in]{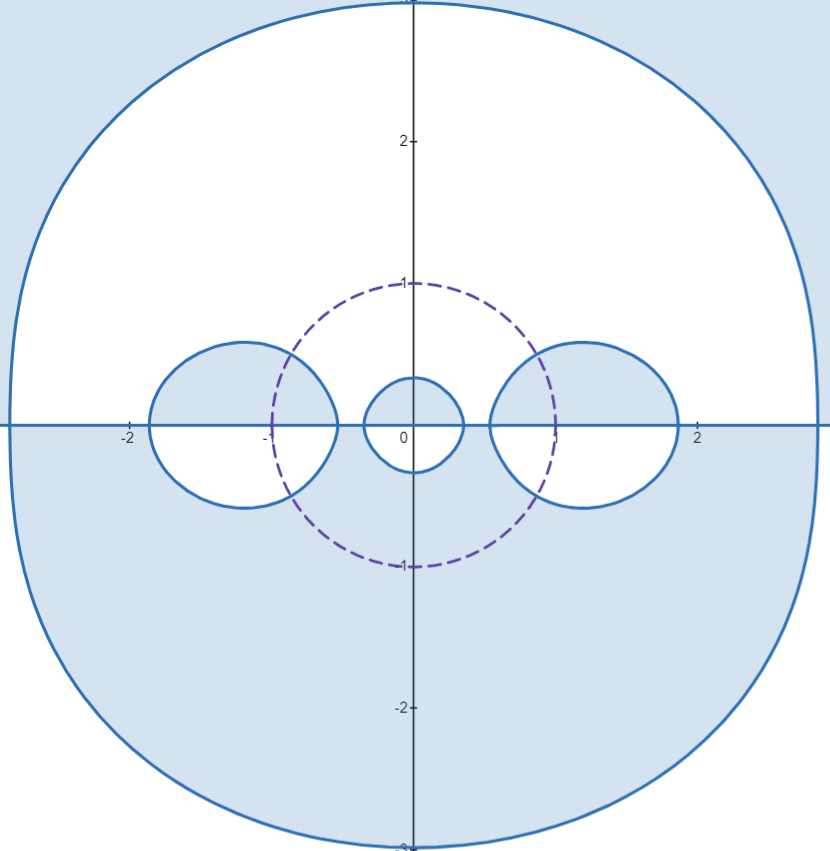}
	\end{minipage}
}%

\caption{\footnotesize Signature table of ${\rm Im}\theta_{12}(k)$ with different $\hat{\xi}$:
	$\textbf{(a)}$ $\hat{\xi}<-\frac{3}{8}$,
	$\textbf{(b)}$ $\hat{\xi}=-\frac{3}{8}$,
	$\textbf{(c)}$ $-\frac{3}{8}<\hat{\xi}<0$.
The blue and white sectors are
 ${\rm Im}\theta_{12}(k)<0$ and   ${\rm Im}\theta_{12}(k)>0$, respectively.
	Moreover, the  purple dotted line  stands for the unit circle.}
\label{figtheta}
\end{figure}

In the region $\mathcal{T}_1^R$,  there are 24 saddle points on three contours $\omega^l \mathbb{R}$, $l=0,1,2$, among them 8 saddle points are on $\mathbb{R}$, which are
\begin{align}
&k_1 = -k_8 = \frac{\sqrt{2}}{4} \left( \sqrt{s_1 +s_2 } + \sqrt{s_1+8 +s_2}\right),\label{pha01}\\
& k_2 = -k_7 = \frac{\sqrt{2}}{4} \left( \sqrt{s_1 -s_2} + \sqrt{s_1+8 -s_2}\right),\label{pha02}\\
&k_3 = -k_6= \frac{1}{k_2},  \quad k_4 = -k_5 = \frac{1}{k_1}, \label{pha03}
\end{align}
where
\begin{equation}\label{phas1s2}
s_1 = -\frac{3}{\hat{\xi}} -2,\quad  s_2 =- \frac{\sqrt{3}\sqrt{3+8\hat{\xi}}}{\hat{\xi}}.
\end{equation}
Moreover, in $\mathcal{T}_{1}^R$, as $t\to\infty$, we have
 $\hat{\xi} \to -\frac{3}{8}$, $s_1 \to 6$, $s_2 \to 0$, and then
\begin{align}
&k_1, k_2 \to k_a = \frac{\sqrt{7}+\sqrt{3}}{2}, \quad k_3, k_4 \to k_b= \frac{\sqrt{7}-\sqrt{3}}{2},\label{hjfd1}\\
&k_5, k_6 \to k_c =-\frac{\sqrt{7}-\sqrt{3}}{2}, \quad k_7, k_8 \to k_d =- \frac{\sqrt{7}+\sqrt{3}}{2},\label{hjfd2}
\end{align}
from which we obtain  the corresponding limit points $\omega^l k_j$, $j=a,b,c,d,\ l=1,2$  on  other two   contours $\omega^l \mathbb{R}, l=1,2$.

By \eqref{rhp2jump}, the jump matrix  $V^{(2)}(k)$ reads
\begin{equation}
	V^{(2)}(k)= \left(\begin{array}{ccc} 1 &   - \bar{d}(k)  e^{\mathrm{i}t\theta_{12}(k) } &0 \\ 0&1& 0\\0&0&1 \end{array}  \right)
	\left(\begin{array}{ccc} 1 & 0&0\\
		d(k) e^{-\mathrm{i}t\theta_{12}(k) } &1&0 \\ 0 & 0&1\end{array}  \right),\quad k \in \mathbb{R},
\end{equation}
where
\begin{equation}\label{p1d1}
	d(k) := -\frac{r(k)}{1-|r(k)|^2}\left(\frac{T_1}{T_2}\right)_+ (k).
\end{equation}
The factorization of  $V^{(2)}(k)$ on $\omega \mathbb{R}$ and $\omega^2 \mathbb{R}$  can be given by the symmetries.

\subsection{Hybrid $\bar{\partial}$-RH problem } \label{subsec31}

Define some intervals
\begin{align*}
& I_{1} = (k_1,\infty),\, I_{2}=((k_2+k_3)/2,k_2),\, I_3 =(k_3, (k_2+k_3)/2 ),\, I_4 =(k_0,k_4),\\
&I_5=( k_5,k_0),\, I_6 = (( k_6+ k_7)/2, k_6),\, I_7 = ( k_7,( k_6+ k_7)/2 ),\, I_8=(-\infty, k_8),
\end{align*}
where $k_0:=0 $ and $\omega I_j=\{k\in I_j:\omega k \}, \ \omega^2 I_j=\{k \in I_j: \omega^2 k \},\ j =1,\cdots, 8$. Further denote
\begin{align*}
I = \mathop{\cup}\limits_{j=1}^8 I_j, \quad  \omega I = \{\omega k:k\in I\},\quad \omega^2 I = \{\omega^2 k: k\in I \}.
\end{align*}
The signature table of $\rm{Im} \theta_{12}(k)$ illustrated in  Figure \ref{figureb}, implies opening $\bar\partial$-lenses  around the intervals $ \omega^lI, l=0,1,2 $ as follows.

First, we open  the interval $ I$   as   depicted    in  Figure \ref{pfv4}  with  a sufficiently small  fixed  angle $0<\varphi<\pi/4$
  such that  the following conditions hold
\begin{itemize}
\item  all opened sectors $\Omega_j \cup \Omega_j^*$  and their boundaries  $\Sigma_{j}\cup \Sigma_{j}^* $  fall within their  decaying regions;

\item  each $\Omega_j \cup \Omega_j^*$  does't   intersect any  critical line ${\rm Im} \theta_{12}(k)=0$;

\item  each $\Omega_j \cup \Omega_j^*$ does't   intersect  any small disk $\mathbb{D}_n$ and $\mathbb{D}_n^*$, $n=1,\cdots,N$.

\end{itemize}
Also, let $\Sigma_{ij}$ be the boundary between $\Omega_i$ and $\Omega_j$,   and  denote
$$\Omega = \mathop{\cup}\limits_{j=0}^8\Omega_{j}, \ \ \Sigma=\mathop{\cup}\limits_{j=0}^8\Sigma_j,
\ \ \tilde\Sigma= \Sigma_{23}\mathop{\cup}\Sigma_{04}\mathop{\cup}\Sigma_{05}\mathop{\cup}\Sigma_{67}.$$


	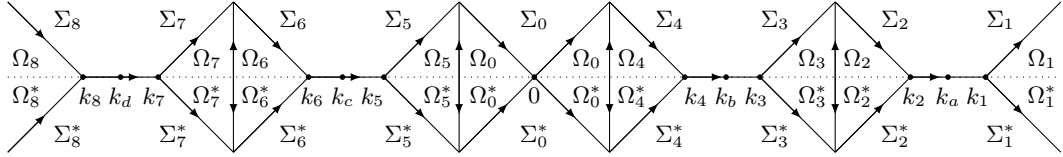
\begin{figure}[http]
\vspace{2mm}
	\centering
	\begin{tikzpicture}[scale=1]
		\draw[dotted](-7,0)--(7,0);
		
		\draw [  ] (-2,0)--(-1,1);
	    \draw [  ](2,0)-- (1,1);
	    \draw [  ] (0,0)--(-1,1);
	    \draw [  ](0,0)-- (1,1);
	    \draw[-latex,  ] (-2,0)--(-1.45,0.55);
	    \draw[-latex,  ] (-1, 1)--(-0.35, 0.35);
	     \draw[-latex,  ] (0,0)--(0.55,0.55);	
		\draw[-latex,  ] (1, 1)--(1.65, 0.35);	    	
	   	\draw [  ](-2,0) -- (-1,-1);
		\draw [  ](2,0)-- (1,-1);
		 \draw [  ] (0,0)--(-1,-1);
		 \draw [  ](0,0)-- (1,-1);
        \draw[-latex,  ](-2,0)--(-1.45,-0.55);
        \draw[-latex,  ](-1,-1)--(-0.35,-0.35);
        \draw[-latex,  ](0,0)--(0.55,-0.55);
        \draw[-latex,  ](1,-1)--(1.65,-0.35);

		\draw [  ](-3,0) -- (-4,1);
		\draw [  ](-5,0) -- (-4,1);		
		\draw[-latex,  ] (-5,0)--(-4.45,0.55);
		\draw[-latex,  ] (-4,1)--(-3.35,0.35);
		\draw [  ](-3,0) -- (-4,-1);
		\draw [  ](-5,0) -- (-4,-1);		
		\draw[-latex,  ] (-5,0)--(-4.45,-0.55);
		\draw[-latex,  ] (-4,-1)--(-3.35,-0.35);

		\draw [  ](-6,0) -- (-7,1);
		\draw [  ](6,0) -- (7,1);		
		\draw [-latex,   ](-7,1) -- (-6.5,0.5);
        \draw [-latex,   ](6,0) -- (6.5,0.5);

		\draw [  ](-6,0) -- (-7,-1);
     	\draw [  ](6,0) -- (7,-1);		
    	\draw [-latex,   ](-7,-1) -- (-6.5,-0.5);
     	\draw [-latex,   ](6,0) -- (6.5,-0.5);
			
 	\draw [  ](3,0) -- (4,1);
	\draw [  ](5,0) -- (4,1);		
	\draw[-latex,  ] (4,1)--(4.65,0.35);
	\draw[-latex,  ] (3,0)--(3.55,0.55);
	\draw [  ](3,0) -- (4,-1);
	\draw [  ](5,0) -- (4,-1);		
	\draw[-latex,  ] (4,-1)--(4.65,-0.35);
	\draw[-latex,  ] (3,-0)--(3.55,-0.55);

	\draw[  ](1,1)--(1,0);
	\draw[  ](1,-1)--(1,0);
	\draw[-latex,  ](1,0)--(1,0.5);
	\draw[-latex,  ](1,0)--(1,-0.5);

	\draw[  ](-1,1)--(-1,0);
	\draw[  ](-1,-1)--(-1,0);
	\draw[-latex,  ](-1,0)--(-1,0.5);
	\draw[-latex,  ](-1,0)--(-1,-0.5);
	
		\draw[  ](4,1)--(4,0);
		\draw[  ](4,-1)--(4,0);
		\draw[-latex,  ](4,0)--(4,0.5);
		\draw[-latex,  ](4,0)--(4,-0.5);
		
		\draw[  ](-4,1)--(-4,0);
        \draw[  ](-4,-1)--(-4,0);
        \draw[-latex,  ](-4,0)--(-4,0.5);
        \draw[-latex,  ](-4,0)--(-4,-0.5);

		 \draw[  ](-3,0)--(-2,0);
	   	 \draw[-latex,  ](-3,0)--(-2.3,0);
	     \draw[  ](-6,0)--(-5,0);
		\draw[-latex,  ](-6,0)--(-5.25,0);
	     \draw[  ](3,0)--(2,0);
      	\draw[-latex,  ](2,0)--(2.5,0);
   	    \draw[  ](6,0)--(5,0);
     	\draw[-latex,  ](5,0)--(5.4,0);
   		
		\filldraw [](0,0) circle [radius=0.03];
 	\node  [below]  at (0,0) {\footnotesize$0$};
		\filldraw [](3,0) circle [radius=0.03];
		\filldraw [](5,0) circle [radius=0.03];
		\filldraw [](2,0) circle [radius=0.03];	
		\filldraw [](6,0) circle [radius=0.03];
		
		\filldraw [](-3,0) circle [radius=0.03];
		\filldraw [](-5,0) circle [radius=0.03];
		\filldraw [](-2,0) circle [radius=0.03];	
		\filldraw [](-6,0) circle [radius=0.03];
		
		\node  [below]  at (2.15,0) {\footnotesize$k_{4}$};
		\node  [below]  at (2.95,0) {\footnotesize$k_{3}$};	
		\node  [below]  at (5.05,0) {\footnotesize$k_{2}$};
		\node  [below]  at (5.9,0) {\footnotesize$k_{1}$};	
		
		\filldraw [](5.5,0) circle [radius=0.03];
		\node  [below]  at (5.5,0) {\footnotesize$k_{a}$};	
		\filldraw [](2.55,0) circle [radius=0.03];
		\node  [below]  at (2.55,0) {\footnotesize$k_{b}$};		
		\filldraw [](-5.5,0) circle [radius=0.03];
		\node  [below]  at (-5.5,0) {\footnotesize$k_{d}$};	
		\filldraw [](-2.55,0) circle [radius=0.03];
		\node  [below]  at (-2.55,0) {\footnotesize$k_{c}$};		
		
		\node  [below]  at (-2.15,0) {\footnotesize$k_{5}$};
		\node  [below]  at (-2.95,0) {\footnotesize$k_{6}$};	
		\node  [below]  at (-5.05,0) {\footnotesize$k_{7}$};
		\node  [below]  at (-5.9,0) {\footnotesize$k_{8}$};		
		
		\node  [above]  at (0,0.5) {\footnotesize$\Sigma_{0}$};
		\node  [below]  at (0,-0.5) {\footnotesize$\Sigma_{0}^*$};	
		
		\node  [above]  at (1.8,0.5) {\footnotesize$\Sigma_{4}$};
		\node  [below]  at (1.8,-0.5) {\footnotesize$\Sigma_{4}^*$};
		\node  [above]  at (3.2,0.5) {\footnotesize$\Sigma_{3}$};
		\node  [below]  at (3.2,-0.5) {\footnotesize$\Sigma_{3}^*$};
		\node  [above]  at (4.8,0.5) {\footnotesize$\Sigma_{2}$};
		\node  [below]  at (4.8,-0.5) {\footnotesize$\Sigma_{2}^*$};	
		\node  [above]  at (6.2,0.5) {\footnotesize$\Sigma_{1}$};
		\node  [below]  at (6.2,-0.5) {\footnotesize$\Sigma_{1}^*$};	
		
		\node  [above]  at (-1.8,0.5) {\footnotesize$\Sigma_{5}$};
		\node  [below]  at (-1.8,-0.5) {\footnotesize$\Sigma_{5}^*$};
		\node  [above]  at (-3.2,0.5) {\footnotesize$\Sigma_{6}$};
		\node  [below]  at (-3.2,-0.5) {\footnotesize$\Sigma_{6}^*$};
		\node  [above]  at (-4.8,0.5) {\footnotesize$\Sigma_{7}$};
		\node  [below]  at (-4.8,-0.5) {\footnotesize$\Sigma_{7}^*$};	
		\node  [above]  at (-6.2,0.5) {\footnotesize$\Sigma_{8}$};
		\node  [below]  at (-6.2,-0.5) {\footnotesize$\Sigma_{8}^*$};

	 \node  [above]  at (0.7,-0.02) {\footnotesize$\Omega_{0}$};
	\node  [below]  at (0.7,0.02) {\footnotesize$\Omega_{0}^*$};
	
		 \node  [above]  at (-0.67,-0.02) {\footnotesize$\Omega_{0}$};
	\node  [below]  at (-0.67,0.02) {\footnotesize$\Omega_{0}^*$};
		
		\node  [above]  at (1.3,-0.02) {\footnotesize$\Omega_{4}$};
		\node  [below]  at (1.3,0.02) {\footnotesize$\Omega_{4}^*$};
		\node  [above]  at (3.7,-0.02) {\footnotesize$\Omega_{3}$};
		\node  [below]  at (3.7,0.02) {\footnotesize$\Omega_{3}^*$};
		\node  [above]  at (4.3,-0.02) {\footnotesize$\Omega_{2}$};
		\node  [below]  at (4.3,0.02) {\footnotesize$\Omega_{2}^*$};
		\node  [above]  at (6.75,-0.02) {\footnotesize$\Omega_{1}$};
		\node  [below]  at (6.75,0.02) {\footnotesize$\Omega_{1}^*$};
		
		\node  [above]  at (-1.28,-0.02) {\footnotesize$\Omega_{5}$};
		\node  [below]  at (-1.28,0.02) {\footnotesize$\Omega_{5}^*$};
		\node  [above]  at (-3.7,-0.02) {\footnotesize$\Omega_{6}$};
		\node  [below]  at (-3.7,0.02) {\footnotesize$\Omega_{6}^*$};
		\node  [above]  at (-4.35,-0.02) {\footnotesize$\Omega_{7}$};
		\node  [below]  at (-4.35,0.02) {\footnotesize$\Omega_{7}^*$};
		\node  [above]  at (-6.75,-0.02) {\footnotesize$\Omega_{8}$};
		\node  [below]  at (-6.75,0.02) {\footnotesize$\Omega_{8}^*$};
	
	\end{tikzpicture}
	\caption{\footnotesize The  contour obtained after opening a part  contour  $I$ of     $\mathbb{R}$.}
	\label{pfv4}
\vspace{2mm}
\end{figure}

Then, we  open   other  intervals  $ \omega  I $  and  $ \omega^2  I$  by rotating  $ I $ according to  the symmetries,
corresponding opened sectors $\omega^l(\Omega_j \cup \Omega_j^*), l=1,2$,  boundaries  $\omega^l(\Sigma_{j}\cup \Sigma_{j}^*), l=1,2 $ and  $\omega^l(\tilde \Sigma_{j}\cup \tilde \Sigma_{j}^*), l=1,2$.
Thus, the whole contour  $ \Sigma^{(3)}$  is given by
\begin{align*}
&\Sigma^{(3)} =   \mathop{\cup}\limits_{l=0}^2 \omega^l  \left(  (\mathbb{R}\setminus  I ) \cup (\Sigma \cup \Sigma^*) \cup (\tilde \Sigma\cup \tilde \Sigma^*) \right).
\end{align*}
See Figure  \ref{pfv3}.

\begin{figure}[http]
	\centering
	\begin{tikzpicture}[scale=0.7]
		\draw[dotted](-7,0)--(7,0);
\node  [right]  at (7.5,0) {\footnotesize$\mathbb{R}$};

\draw [  ] (-2,0)--(-1.5,0.866);
\draw [  ](2,0)-- (1.5,0.866);
\draw[-latex,  ] (-2,0)--(-1.6,0.6928);
\draw[-latex,  ] (1.5,0.866)--(1.85,0.2598);			
\draw [  ] (0,0)--(-1.5,0.866);
\draw [  ] (0,0)--(1.5,0.866);
\draw[-latex,  ] (-1.5,0.866)--(-0.65,0.3753);
\draw[-latex,  ] (0, 0)--(1, 0.5773);

\draw [  ](-2,0) -- (-1.5,-0.866);
\draw [  ](2,0)-- (1.5,-0.866);
\draw[-latex,  ](-2,0)--(-1.6,-0.6928);
\draw[-latex,  ](1.5,-0.866)--(1.85,-0.2598);	
\draw [  ](0,0) -- (-1.5,-0.866);
\draw [  ](0,0)-- (1.5,-0.866);
\draw[-latex,  ](-1.5,-0.866)--(-0.65,-0.3753);
\draw[-latex,  ](0,0)--(1,-0.5773);	
	
	       \draw[  ](-1.5,0.866)--(-1.5,0);
	\draw[  ](-1.5,-0.866)--(-1.5,0);
	\draw[-latex,  ](-1.5,0)--(-1.5,0.5);
	\draw[-latex,  ](-1.5,0)--(-1.5,-0.5);

	\draw[  ](1.5,0.866)--(1.5,0);
	\draw[  ](1.5,-0.866)--(1.5,0);
	\draw[-latex,  ](1.5,0)--(1.5,0.5);
	\draw[-latex,  ](1.5,0)--(1.5,-0.5);

	\draw [  ](-3,0) -- (-4,1);
	\draw [  ](-5,0) -- (-4,1);		
	\draw[-latex,  ] (-5,0)--(-4.45,0.55);
	\draw[-latex,  ] (-4,1)--(-3.35,0.35);
	\draw [  ](-3,0) -- (-4,-1);
	\draw [  ](-5,0) -- (-4,-1);		
	\draw[-latex,  ] (-5,0)--(-4.45,-0.55);
	\draw[-latex,  ] (-4,-1)--(-3.35,-0.35);

	\draw [  ](-6,0) -- (-7,1);
	\draw [  ](6,0) -- (7,1);		
	\draw [-latex,   ](-7,1) -- (-6.5,0.5);
	\draw [-latex,   ](6,0) -- (6.5,0.5);

	\draw [  ](-6,0) -- (-7,-1);
	\draw [  ](6,0) -- (7,-1);		
	\draw [-latex,   ](-7,-1) -- (-6.5,-0.5);
	\draw [-latex,   ](6,0) -- (6.5,-0.5);
	
	\draw [  ](3,0) -- (4,1);
	\draw [  ](5,0) -- (4,1);		
	\draw[-latex,  ] (4,1)--(4.65,0.35);
	\draw[-latex,  ] (3,0)--(3.55,0.55);
	\draw [  ](3,0) -- (4,-1);
	\draw [  ](5,0) -- (4,-1);		
	\draw[-latex,  ] (4,-1)--(4.65,-0.35);
	\draw[-latex,  ] (3,-0)--(3.55,-0.55);

	\draw[  ](4,1)--(4,0);
	\draw[  ](4,-1)--(4,0);
	\draw[-latex,  ](4,0)--(4,0.5);
	\draw[-latex,  ](4,0)--(4,-0.5);
	
	\draw[  ](-4,1)--(-4,0);
	\draw[  ](-4,-1)--(-4,0);
	\draw[-latex,  ](-4,0)--(-4,0.5);
	\draw[-latex,  ](-4,0)--(-4,-0.5);

	\draw[  ](-3,0)--(-2,0);
	\draw[-latex,  ](-3,0)--(-2.3,0);
	\draw[  ](-6,0)--(-5,0);
	\draw[-latex,  ](-6,0)--(-5.25,0);
	\draw[  ](3,0)--(2,0);
	\draw[-latex,  ](2,0)--(2.5,0);
	\draw[  ](6,0)--(5,0);
	\draw[-latex,  ](5,0)--(5.4,0);

\draw[dotted,rotate=60](-7,0)--(7,0);
\node    at (4.1,6.7) {\footnotesize$\omega\mathbb{R}$};	

\draw [  ,rotate=60] (-2,0)--(-1.5,0.866);
\draw [  ,rotate=60](2,0)-- (1.5,0.866);
\draw[-latex,  ,rotate=60] (-2,0)--(-1.6,0.6928);
\draw[-latex,  ,rotate=60] (1.5,0.866)--(1.85,0.2598);			
\draw [  ,rotate=60] (0,0)--(-1.5,0.866);
\draw [  ,rotate=60] (0,0)--(1.5,0.866);
\draw[-latex,  ,rotate=60] (-1.5,0.866)--(-0.65,0.3753);
\draw[-latex,  ,rotate=60] (0, 0)--(1, 0.5773);

\draw [  ,rotate=60](-2,0) -- (-1.5,-0.866);
\draw [  ,rotate=60](2,0)-- (1.5,-0.866);
\draw[-latex,  ,rotate=60](-2,0)--(-1.6,-0.6928);
\draw[-latex,  ,rotate=60](1.5,-0.866)--(1.85,-0.2598);	
\draw [  ,rotate=60](0,0) -- (-1.5,-0.866);
\draw [  ,rotate=60](0,0)-- (1.5,-0.866);
\draw[-latex,  ,rotate=60](-1.5,-0.866)--(-0.65,-0.3753);
\draw[-latex,  ,rotate=60](0,0)--(1,-0.5773);	

\draw[  ,rotate=60](-1.5,0.866)--(-1.5,0);
\draw[  ,rotate=60](-1.5,-0.866)--(-1.5,0);
\draw[-latex,  ,rotate=60](-1.5,0)--(-1.5,0.5);
\draw[-latex,  ,rotate=60](-1.5,0)--(-1.5,-0.5);

\draw[  ,rotate=60](1.5,0.866)--(1.5,0);
\draw[  ,rotate=60](1.5,-0.866)--(1.5,0);
\draw[-latex,  ,rotate=60](1.5,0)--(1.5,0.5);
\draw[-latex,  ,rotate=60](1.5,0)--(1.5,-0.5);

\draw [  ,rotate=60](-3,0) -- (-4,1);
\draw [  ,rotate=60](-5,0) -- (-4,1);		
\draw[-latex,  ,rotate=60] (-5,0)--(-4.45,0.55);
\draw[-latex,  ,rotate=60] (-4,1)--(-3.35,0.35);
\draw [  ,rotate=60](-3,0) -- (-4,-1);
\draw [  ,rotate=60](-5,0) -- (-4,-1);		
\draw[-latex,  ,rotate=60] (-5,0)--(-4.45,-0.55);
\draw[-latex,  ,rotate=60] (-4,-1)--(-3.35,-0.35);

\draw [  ,rotate=60](-6,0) -- (-7,1);
\draw [  ,rotate=60](6,0) -- (7,1);		
\draw [-latex,   ,rotate=60](-7,1) -- (-6.5,0.5);
\draw [-latex,   ,rotate=60](6,0) -- (6.5,0.5);

\draw [  ,rotate=60](-6,0) -- (-7,-1);
\draw [  ,rotate=60](6,0) -- (7,-1);		
\draw [-latex,   ,rotate=60](-7,-1) -- (-6.5,-0.5);
\draw [-latex,   ,rotate=60](6,0) -- (6.5,-0.5);

\draw [  ,rotate=60](3,0) -- (4,1);
\draw [  ,rotate=60](5,0) -- (4,1);		
\draw[-latex,  ,rotate=60] (4,1)--(4.65,0.35);
\draw[-latex,  ,rotate=60] (3,0)--(3.55,0.55);
\draw [  ,rotate=60](3,0) -- (4,-1);
\draw [  ,rotate=60](5,0) -- (4,-1);		
\draw[-latex,  ,rotate=60] (4,-1)--(4.65,-0.35);
\draw[-latex,  ,rotate=60] (3,-0)--(3.55,-0.55);

\draw[  ,rotate=60](4,1)--(4,0);
\draw[  ,rotate=60](4,-1)--(4,0);
\draw[-latex,  ,rotate=60](4,0)--(4,0.5);
\draw[-latex,  ,rotate=60](4,0)--(4,-0.5);

\draw[  ,rotate=60](-4,1)--(-4,0);
\draw[  ,rotate=60](-4,-1)--(-4,0);
\draw[-latex,  ,rotate=60](-4,0)--(-4,0.5);
\draw[-latex,  ,rotate=60](-4,0)--(-4,-0.5);

\draw[  ,rotate=60](-3,0)--(-2,0);
\draw[-latex,  ,rotate=60](-3,0)--(-2.3,0);
\draw[  ,rotate=60](-6,0)--(-5,0);
\draw[-latex,  ,rotate=60](-6,0)--(-5.25,0);
\draw[  ,rotate=60](3,0)--(2,0);
\draw[-latex,  ,rotate=60](2,0)--(2.5,0);
\draw[  ,rotate=60](6,0)--(5,0);
\draw[-latex,  ,rotate=60](5,0)--(5.4,0);

\draw[dotted,rotate=120](-7,0)--(7,0);
\node    at (-4.1,6.7) {\footnotesize$\omega^2\mathbb{R}$};

\draw [  ,rotate=120] (-2,0)--(-1.5,0.866);
\draw [  ,rotate=120](2,0)-- (1.5,0.866);
\draw[-latex,  ,rotate=120] (-2,0)--(-1.6,0.6928);
\draw[-latex,  ,rotate=120] (1.5,0.866)--(1.85,0.2598);			
\draw [  ,rotate=120] (0,0)--(-1.5,0.866);
\draw [  ,rotate=120] (0,0)--(1.5,0.866);
\draw[-latex,  ,rotate=120] (-1.5,0.866)--(-0.65,0.3753);

\draw [  ,rotate=120](-2,0) -- (-1.5,-0.866);
\draw [  ,rotate=120](2,0)-- (1.5,-0.866);
\draw[-latex,  ,rotate=120](-2,0)--(-1.6,-0.6928);
\draw[-latex,  ,rotate=120](1.5,-0.866)--(1.85,-0.2598);	
\draw [  ,rotate=120](0,0) -- (-1.5,-0.866);
\draw [  ,rotate=120](0,0)-- (1.5,-0.866);
\draw[-latex,  ,rotate=120](0,0)--(1,-0.5773);	

\draw[  ,rotate=120](-1.5,0.866)--(-1.5,0);
\draw[  ,rotate=120](-1.5,-0.866)--(-1.5,0);
\draw[-latex,  ,rotate=120](-1.5,0)--(-1.5,0.5);
\draw[-latex,  ,rotate=120](-1.5,0)--(-1.5,-0.5);

\draw[  ,rotate=120](1.5,0.866)--(1.5,0);
\draw[  ,rotate=120](1.5,-0.866)--(1.5,0);
\draw[-latex,  ,rotate=120](1.5,0)--(1.5,0.5);
\draw[-latex,  ,rotate=120](1.5,0)--(1.5,-0.5);

\draw [  ,rotate=120](-3,0) -- (-4,1);
\draw [  ,rotate=120](-5,0) -- (-4,1);		
\draw[-latex,  ,rotate=120] (-5,0)--(-4.45,0.55);
\draw[-latex,  ,rotate=120] (-4,1)--(-3.35,0.35);
\draw [  ,rotate=120](-3,0) -- (-4,-1);
\draw [  ,rotate=120](-5,0) -- (-4,-1);		
\draw[-latex,  ,rotate=120] (-5,0)--(-4.45,-0.55);
\draw[-latex,  ,rotate=120] (-4,-1)--(-3.35,-0.35);

\draw [  ,rotate=120](-6,0) -- (-7,1);
\draw [  ,rotate=120](6,0) -- (7,1);		
\draw [-latex,   ,rotate=120](-7,1) -- (-6.5,0.5);
\draw [-latex,   ,rotate=120](6,0) -- (6.5,0.5);

\draw [  ,rotate=120](-6,0) -- (-7,-1);
\draw [  ,rotate=120](6,0) -- (7,-1);		
\draw [-latex,   ,rotate=120](-7,-1) -- (-6.5,-0.5);
\draw [-latex,   ,rotate=120](6,0) -- (6.5,-0.5);

\draw [  ,rotate=120](3,0) -- (4,1);
\draw [  ,rotate=120](5,0) -- (4,1);		
\draw[-latex,  ,rotate=120] (4,1)--(4.65,0.35);
\draw[-latex,  ,rotate=120] (3,0)--(3.55,0.55);
\draw [  ,rotate=120](3,0) -- (4,-1);
\draw [  ,rotate=120](5,0) -- (4,-1);		
\draw[-latex,  ,rotate=120] (4,-1)--(4.65,-0.35);
\draw[-latex,  ,rotate=120] (3,-0)--(3.55,-0.55);

\draw[  ,rotate=120](4,1)--(4,0);
\draw[  ,rotate=120](4,-1)--(4,0);
\draw[-latex,  ,rotate=120](4,0)--(4,0.5);
\draw[-latex,  ,rotate=120](4,0)--(4,-0.5);

\draw[  ,rotate=120](-4,1)--(-4,0);
\draw[  ,rotate=120](-4,-1)--(-4,0);
\draw[-latex,  ,rotate=120](-4,0)--(-4,0.5);
\draw[-latex,  ,rotate=120](-4,0)--(-4,-0.5);

\draw[  ,rotate=120](-3,0)--(-2,0);
\draw[-latex,  ,rotate=120](-3,0)--(-2.3,0);
\draw[  ,rotate=120](-6,0)--(-5,0);
\draw[-latex,  ,rotate=120](-6,0)--(-5.25,0);
\draw[  ,rotate=120](3,0)--(2,0);
\draw[-latex,  ,rotate=120](2,0)--(2.5,0);
\draw[  ,rotate=120](6,0)--(5,0);
\draw[-latex,  ,rotate=120](5,0)--(5.4,0);

	\node  [below]  at (2.15,0) {\footnotesize$k_{4}$};
\node  [below]  at (2.95,0) {\footnotesize$k_{3}$};	
\node  [below]  at (5.05,0) {\footnotesize$k_{2}$};
\node  [below]  at (5.9,0) {\footnotesize$k_{1}$};

\node  [below]  at (-2.15,0) {\footnotesize$k_{5}$};
\node  [below]  at (-2.95,0) {\footnotesize$k_{6}$};	
\node  [below]  at (-5.05,0) {\footnotesize$k_{7}$};
\node  [below]  at (-5.9,0) {\footnotesize$k_{8}$};		
\end{tikzpicture}
\caption{\footnotesize   The whole jump contour $ \Sigma^{(3)}$ for the RH problem \ref{p1rhpM2}, which is obtained by  opening three  contours  $\omega^l I, \ l=0, 1, 2$.}
\label{pfv3}
\end{figure}
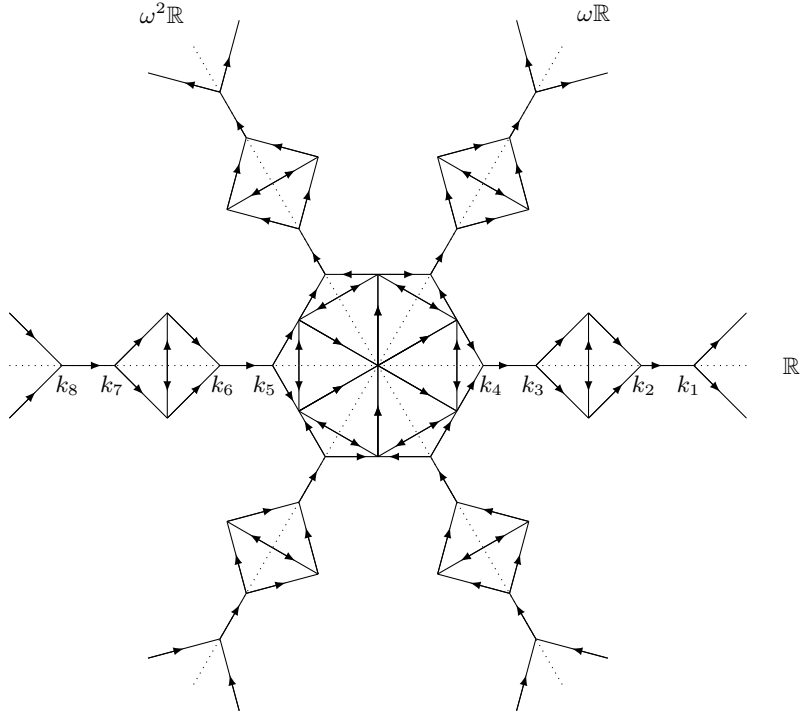

The  continuous extension functions $R_j(k),\ j =0, \cdots,8$   on $\Omega $ are defined as follows. Others
on $\Omega^*$ and $\omega^l(\Omega_j \cup \Omega_j^*), \ l=1,2$  can be obtained by symmetries.
\begin{lemma}
There exist functions $R_{j}(k): \ \bar{\Omega}_{j}\rightarrow\mathbb{C}, \ j=0,\cdots,8,$ continuous on $\bar{\Omega}_{j}$, with continuous first partial derivative
on $\Omega_j$, and boundary values
  \begin{align}
  &R_{j}(k)=\Bigg\{\begin{array}{ll}
    d(k) , \ k\in I_{j},\\
    d(k_{j}), \ k\in\Sigma_{j},
    \end{array}\label{R212}
  \end{align}
and
for $k\in\Omega_{j}, \ j=0,\cdots,8$, admit the estimates
\begin{align}
  |\bar{\partial}R_{j}(k)|& \leq c_1  +c_2 |\re k-k_j|^{-1/2}, \label{p1est0}\\
|\bar{\partial}R_{j}(k)|& \leq c_3    |\re k-k_j|^{1/2},\label{p1est1}\\
|\bar{\partial}R_{j}(k)|&\leq c_4. \label{p1est2}
\end{align}
Setting $R: \Omega \to \mathbb{C}$ by $R(k)|_{k\in\Omega_j} =R_{j}(k)$ and $R(k)|_{k\in\Omega_{j}^*} =R_{j}^*(k)$, the extension can preserve the symmetry  $R(k)=\overline{R(\bar{k}^{-1})}$.
\end{lemma}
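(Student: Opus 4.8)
The plan is to build each $R_j$ by an explicit non-analytic interpolation between its two prescribed boundary values in \eqref{R212}, sacrificing analyticity in a controlled way while keeping the $\bar\partial$-derivative small. Writing $k = k_j + \rho e^{\mathrm{i}\psi}$ in the sector $\Omega_j$, with $\psi = 0$ on the real interval $I_j$ and $\psi = \varphi$ on the ray $\Sigma_j$, I would set
\[
R_j(k) = d(k_j) + \bigl(d(\re k) - d(k_j)\bigr)\,\mathcal{C}(\psi),
\]
where $\mathcal{C}$ is a fixed smooth cutoff with $\mathcal{C}(0)=1$ and $\mathcal{C}(\varphi)=0$ (for instance $\mathcal{C}(\psi) = \cos(\pi\psi/2\varphi)$). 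By construction $R_j = d$ on $I_j$ (since $\re k = k$ there) and $R_j = d(k_j)$ on $\Sigma_j$, while $R_j$ is continuous on $\bar\Omega_j$ and $C^1$ on $\Omega_j$ because $r\in\mathcal{S}(\mathbb{R})$ makes $d$ smooth away from the saddle and $\mathcal{C}$ is smooth. The extensions on $\Omega_j^*$ and on the rotated sectors would then be forced by the symmetries, taking $R_j^*(k) := \overline{R_j(\bar k^{-1})}$; since $d$ inherits the symmetry $d(k) = \overline{d(\bar k^{-1})}$ from $r_\pm(k) = \overline{r_\pm(\bar k^{-1})}$ together with the matching symmetries of $\delta$ and of the $T_i$ appearing in \eqref{p1d1}, the relation $R(k) = \overline{R(\bar k^{-1})}$ holds by design.

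The second step is the $\bar\partial$-computation. Differentiating the formula gives two contributions,
\[
\bar\partial R_j(k) = \tfrac12 d'(\re k)\,\mathcal{C}(\psi) + \bigl(d(\re k) - d(k_j)\bigr)\,\mathcal{C}'(\psi)\,\bar\partial\psi,
\]
and a direct calculation yields $\bar\partial\psi = \tfrac{\mathrm{i}}{2}\,\overline{(k-k_j)}^{-1}$, so $|\bar\partial\psi| = (2|k - k_j|)^{-1}$. The first term is uniformly bounded by $\|d'\|_\infty$. For the second term the entire game is to weigh the increment $d(\re k) - d(k_j)$ against the singular factor $|k-k_j|^{-1}$, using the elementary comparison $|\re k - k_j| \le |k - k_j|$. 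Bounding the increment by Cauchy--Schwarz, $|d(\re k) - d(k_j)| \le \|d'\|_{L^2}|\re k - k_j|^{1/2}$, and combining with $|k-k_j|^{-1} \le |\re k - k_j|^{-1}$ produces the $|\re k - k_j|^{-1/2}$ behaviour, giving \eqref{p1est0}; bounding it instead by the Lipschitz estimate $|d(\re k) - d(k_j)| \le \|d'\|_\infty|\re k - k_j|$ and using $|\re k - k_j|/|k-k_j| \le 1$ leaves a uniformly bounded quantity, giving \eqref{p1est2}.

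The remaining estimate \eqref{p1est1}, which supplies the decaying bound $|\bar\partial R_j|\lesssim |\re k - k_j|^{1/2}$ needed to control the $\bar\partial$-contribution under the double-scaling limit, is the delicate point and the main obstacle. It cannot come from the generic interpolation above, since the term $\tfrac12 d'(\re k)\mathcal{C}(\psi)$ does not vanish as $k\to k_j$; getting it requires exploiting the extra structure at the relevant pivots — that $d$ (equivalently $r$) vanishes at the endpoint $k_0 = 0$, and that on the sectors attached to the coalescing saddles one compares $d$ with the constant value $d(k_a)$ (resp. $d(k_b)$) feeding the Painlev\'e parametrix, so that the increment is genuinely higher order there. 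The hard part will be keeping the constants $c_1,\dots,c_4$ uniform as $\hat\xi \to -\tfrac38$, when pairs of saddle points coalesce ($k_1,k_2\to k_a$, $k_3,k_4\to k_b$, and so on), since the $|\re k - k_j|^{-1/2}$ singularities of neighbouring sectors could otherwise interact; this forces a careful, sector-by-sector choice of the opening angle $\varphi$ and of the pivot used in each $\mathcal{C}$, together with control of the derivatives of the $\delta$- and $T_i$-factors in $d$, which carry the $(k-k_j)^{\pm\mathrm{i}\nu}$ behaviour near the saddles.
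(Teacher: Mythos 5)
Your construction and your handling of \eqref{p1est0} and \eqref{p1est2} are essentially the paper's own proof. The paper defines, for the representative case $j=1$,
\begin{equation*}
R_{1}(k) = \bigl(d(\re k)-d(k_1)\bigr)\cos\Bigl(\tfrac{\pi \arg(k-k_1)\,\chi(\arg(k-k_1))}{2\varphi}\Bigr)+d(k_1),
\end{equation*}
which is your $\mathcal{C}(\psi)=\cos(\pi\psi/2\varphi)$ interpolation up to an inner cutoff $\chi$ from \eqref{chi} that changes nothing essential; it then computes $\bar\partial R_1$ in polar coordinates exactly as you do (the two terms in \eqref{ieow} are your two terms), bounds the increment by Cauchy--Schwarz as in \eqref{r21} to obtain \eqref{p1est0}, and uses the Lipschitz bound $|d(u)-d(k_1)|\le\|d'\|_{L^\infty}|u-k_1|$ to obtain \eqref{p1est2}. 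Up to that point the two arguments coincide, including the symmetry remark, which the paper also only asserts.

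The genuine gap is \eqref{p1est1}, which you explicitly leave unproven, and the mechanism that closes it in the paper is not the one you guess at. The paper proves \eqref{p1est1} by invoking the vanishing $d'(k_1)=0$ at the saddle point: once $d'(k_j)=0$, H\"older's inequality applied to $d''$ gives
\begin{equation*}
|d'(u)|\le\|d''\|_{L^2(\mathbb{R})}|u-k_j|^{1/2},\qquad |d(u)-d(k_j)|\le\|d''\|_{L^2(\mathbb{R})}|u-k_j|^{3/2},
\end{equation*}
and inserting these into the expression for $\bar\partial R_j$ makes both the $d'(\re k)$-term and the singular term $(d(\re k)-d(k_j))\cdot\mathcal{O}(|k-k_j|^{-1})$ of order $|\re k-k_j|^{1/2}$, since $|\re k-k_j|\le|k-k_j|$. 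This is precisely the input that kills the obstruction you yourself identified (the non-vanishing of $\tfrac12 d'(\re k)\mathcal{C}(\psi)$ at $k_j$), and your proposed substitutes do not supply it: the vanishing of $r$ at $k_0=0$ is relevant only for the sector attached to the origin, and replacing the pivot $d(k_j)$ by the Painlev\'e value $d(k_a)$ would violate the required boundary condition \eqref{R212} (the extension must equal $d(k_j)$ on $\Sigma_j$) while still leaving the $d'(\re k)$-term uncontrolled. So your proposal establishes two of the three estimates; the third needs the additional property $d'(k_j)=0$ of the function $d$ in \eqref{p1d1}, which you neither state nor use --- and which, it should be said, the paper itself introduces only with the unproved phrase ``Noting $d'(k_1)=0$''. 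Your closing concern about uniformity of the constants as the saddle points coalesce is reasonable, but it is not addressed in the paper's proof either and is not part of the lemma as stated.
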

\begin{proof}
	Without loss of generality, we give the brief proof for  $R_{1}(k)$ and other cases for $R_j(k),\ j =0, 2,\cdots,8$, can be given similarly.

We define the function
\begin{equation}\label{r11}
 R_{1}(k) := (d(\re k)-d(k_1)) \cos \left( \frac{ \pi \arg(k-k_1) \chi\left(\arg(k-k_1)\right)}{2 \varphi}\right) +d (k_1),
\end{equation}
where $\chi \in C_{0}^\infty (\mathbb{R})$ is a  cut-off function satisfying
\begin{equation}\label{chi}
\chi(k) := \begin{cases}
0, \quad k <  \frac{\varphi}{3},\\
1, \quad k > \frac{2\varphi}{3}.
\end{cases}
\end{equation}
Then   $R_{1}(k) $ satisfies the boundary condition (\ref{R212}) for $j=1$.
Next, we show that  $R_{1}(k) $ admits the estimates (\ref{p1est0})-(\ref{p1est2}).

Note that for $k=k_1+ l e^{\mathrm{i} \varphi_0}=u+   \mathrm{i} v  \in \Omega_{1}, \ \varphi_0 < \varphi$, we have
$$\bar\partial = \frac{e^{\mathrm{i} \varphi_0}}{2} (\partial_l + \mathrm{i} l^{-1} \partial_{\varphi_0}),$$
which acts   on  $ R_{1}(k)$ in \eqref{r11} yields
\begin{equation}
 \bar \partial R_{1}(k) = \frac{1}{2} d'(u) \cos \left( \frac{\pi \varphi_0 \chi(\varphi_0)}{2 \varphi}\right) - \frac{\mathrm{i} e^{\mathrm{i} \varphi_0} }{2 l}(d(u)-d(k_1))  \frac{\pi \chi'(\varphi_0)}{2\varphi} \sin \left(\frac{\pi \varphi_0\chi(\varphi_0)}{2\varphi} \right). \label{ieow}
\end{equation}

By H\"{o}lder's inequality,   we   have
\begin{equation}\label{r21}
|d(u)-d(k_1)|  =\bigg| \int_{k_1}^u d'(s)ds  \bigg|  \le ||d'||_{L^2(\mathbb{R})} |u-k_1|^{1/2},
\end{equation}
which together with (\ref{ieow})  leads to \eqref{p1est0}.

Noting $d'(k_1)=0$,  and using \eqref{r21} along with H\"{o}lder's inequality once again,  we obtain
\begin{align}
&|d(u)-d(k_1)|  \le ||d'' ||_{L^2(\mathbb{R})} |u-k_1|^{3/2},\\
& |d'(u)|   =\bigg| \int_{k_1}^u d''(s)ds  \bigg|  \le ||d''||_{L^2(\mathbb{R})} |u-k_1|^{1/2}.
\end{align}
Substituting this into  (\ref{ieow})  gives \eqref{p1est1}.

Moreover, direct calculations show that
\begin{equation}
|d(u)-d(k_1)|  =\bigg| \int_{k_1}^u d'(s)ds  \bigg| \le ||d' ||_{L^\infty(\mathbb{R})} |u-k_1|,
\end{equation}
which together with (\ref{ieow}) gives \eqref{p1est2}.

\end{proof}

Using $R_{j}(k), \ j=0, \cdots,8$, we define a matrix function
\begin{align}
\mathcal{R}^{(2)}(k)=\left\{
\begin{aligned}
    &\left(\begin{array}{ccc}1 & 0 & 0 \\ - R_{j}(k)e^{-\mathrm{i}t\theta_{12}(k)} & 1 & 0 \\ 0 & 0 & 1 \end{array}  \right), \quad k\in\Omega_j, \\
    &\left(\begin{array}{ccc}1 &  -R_j^*(k)e^{\mathrm{i}t\theta_{12}(k)} & 0 \\ 0 & 1 & 0 \\ 0 & 0 & 1 \end{array}  \right), \quad k\in\Omega_j^*,\\
    &\left(\begin{array}{ccc} 1 & 0 & -R_{j}(\omega k)e^{\mathrm{i}t\theta_{13}(k)} \\ 0 & 1 & 0 \\ 0 & 0 & 1 \end{array}  \right), \ k\in \omega\Omega_{j},\\
    &\left(\begin{array}{ccc} 1 & 0 & 0 \\ 0 & 1 & 0 \\-R_j^*(\omega k)e^{-\mathrm{i}t\theta_{13}(k)} & 0 & 1\end{array}  \right), \ k\in \omega\Omega_{j}^{*}, \\
    &\left(\begin{array}{ccc} 1 & 0 & 0 \\ 0 & 1 & 0 \\ 0 & -R_{j}(\omega^2 k)e^{-\mathrm{i}t\theta_{23}(k)} & 1 \end{array}  \right), \ k\in \omega^2\Omega_{j}, \\
    &\left(\begin{array}{ccc} 1 & 0 & 0 \\ 0 & 1 & -R_j^*(\omega^2 k)e^{\mathrm{i}t\theta_{23}(k)} \\ 0 & 0 & 1\end{array}  \right), \quad k\in \omega^2\Omega_{j}^{*},  \\
    &I, \quad elsewhere.
\end{aligned}
\right.
\end{align}
Then, we make a transformation
\begin{equation}\label{p1m1m2}
m^{(3)}(k) = m^{(2)}(k) \mathcal{R}^{(2)}(k),
\end{equation}
which satisfies the following RH problem.

\begin{RHP}\label{p1rhpM2}
Find a  row vector-valued function  $m^{(3)}(k):= m^{(3)}(k;y,t)$ such that
\begin{itemize}
\item $m^{(3)}(k)$ is continuous in
$\mathbb{C}\setminus \Sigma^{(3)}$.

\item $m^{(3)}(k)$ has continuous boundary values $m^{(3)}_\pm(k)$ on $\Sigma^{(3)}$ and
\begin{equation}
	m^{(3)}_+(k)=m^{(3)}_-(k)V^{(3)}(k),\hspace{0.5cm}k \in \Sigma^{(3)},
\end{equation}
where
\begin{align}
&V^{(3)}(k)=\left\{
\begin{aligned}
    & V^{(2)}(k), \quad k \in \mathop{\cup}\limits_{l=0}^{2}    \omega^l (\mathbb{R} \setminus I),\\
      &\mathcal{R}^{(2)}(k)|_{k\in\Omega_{il}}^{-1}\mathcal{R}^{(2)}(k)|_{k\in\Omega_{jl}}, \quad k\in \mathop{\cup}\limits_{l=0}^{2}    \omega^l\tilde{\Sigma}, \\
  &\mathcal{R}^{(2)}(k)|_{k\in\Omega_{jl}}^{-1}\mathcal{R}^{(2)}(k)|_{k\in\Omega_{il}}, \quad k \in   \mathop{\cup}\limits_{l=0}^{2}    \omega^l\tilde{\Sigma}^*,\\
    & \mathcal{R}^{(2)}(k')^{-1}, \quad k\in  \mathop{\cup}\limits_{l=0}^{2}    \omega^l\Sigma,\\
    & \mathcal{R}^{(2)}(k'), \quad k\in\mathop{\cup}\limits_{l=0}^{2}    \omega^l\Sigma^*.\label{p1v2}
\end{aligned}
\right.
\end{align}
\item
	$m^{(3)}(k) =\begin{pmatrix} 1& 1& 1 \end{pmatrix}+\mathcal{O}(k^{-1}),\hspace{0.5cm}k \rightarrow \infty.$
\item For $k\in\mathbb{C}$, we have
\begin{align}
	\bar{\partial}m^{(3)}(k)=m^{(3)}(k)\bar{\partial}\mathcal{R}^{(2)}(k),
\end{align}
where
\begin{align} \label{dbarr2}
\bar{\partial} \mathcal{R}^{(2)}(k)=\left\{
\begin{aligned}
    &\left(\begin{array}{ccc}0 & 0 & 0 \\  -\bar{\partial}R_{j}(k)e^{-\mathrm{i}t\theta_{12}(k)} & 0 & 0 \\ 0 & 0 & 0 \end{array}  \right), \quad k\in\Omega_{j}, \\
    &\left(\begin{array}{ccc}0 & - \bar{\partial}R_{j}^*(k)e^{\mathrm{i}t\theta_{12}(k)} & 0 \\ 0 & 0 & 0 \\ 0 & 0 & 0 \end{array}  \right), \quad k\in\Omega_{j}^*,\\
    &\left(\begin{array}{ccc} 0 & 0 & -\bar{\partial}R_{j}(\omega k)e^{\mathrm{i}t\theta_{13}(k)} \\ 0 & 0 & 0 \\ 0 & 0 & 0 \end{array}  \right), \ k\in \omega\Omega_{j},\\
    &\left(\begin{array}{ccc} 0 & 0 & 0 \\ 0 & 0 & 0 \\ -\bar{\partial}R_{j}^*(\omega k)e^{-\mathrm{i}t\theta_{13}(k)} & 0 & 0\end{array}  \right), \ k\in \omega\Omega_{j}^*, \\
    &\left(\begin{array}{ccc} 0 & 0 & 0 \\ 0 & 0 & 0 \\ 0 & -\bar{\partial}R_{j}(\omega^2 k)e^{-\mathrm{i}t\theta_{23}(k)} & 0 \end{array}  \right), \ k\in\omega^2\Omega_{j}, \\
    &\left(\begin{array}{ccc} 0 & 0 & 0 \\ 0 & 0 & -\bar{\partial}R_{j}^*(\omega^2 k)e^{\mathrm{i}t\theta_{23}(k)} \\ 0 & 0 & 0\end{array}  \right), \quad k\in \omega^2\Omega_{j}^*, \\
    &0, \quad elsewhere.
\end{aligned}
\right.
\end{align}

\end{itemize}	
\end{RHP}

We decompose the hybrid $\bar{\partial}$-RH problem \ref{p1rhpM2} as follows:
\begin{equation}\label{p1m3m2}
  m^{(3)}(k)= m^{(4)}(k)M^{rhp}(k),
\end{equation}
where $m^{(4)}(k)$ is the solution of a pure $\bar{\partial}$-problem that will be solved in Subsection \ref{p1pdbar}, and $M^{rhp}(k)$ satisfies the following pure RH problem.
\begin{RHP}\label{p1purerhp}
Find a  matrix-valued function  $M^{rhp}(k):= M^{rhp}(k;y,t)$ such that
\begin{itemize}
\item $M^{rhp}(k)$ is analytic in
$\mathbb{C}\setminus \Sigma^{(3)}$.
\item $M^{rhp}(k)$ has continuous boundary values $M^{rhp}_\pm(k)$ on $\Sigma^{(3)}$ and
\begin{equation}
	M^{rhp}_+(k)=M^{rhp}_-(k)V^{(3)}(k),\hspace{0.5cm}k \in \Sigma^{(3)},
\end{equation}
where $V^{(3)}(k)$ is defined by \eqref{p1v2}.
\item
	$M^{rhp}(k) =I+\mathcal{O}(k^{-1}),\hspace{0.5cm}k \rightarrow \infty.$
\end{itemize}	
\end{RHP}

\subsection{Asymptotic analysis on a pure  RH problem} \label{p1prh}


Define   small disks   
$$\mathrm{U}_{jl} := \{k \in \mathbb{C}: | k- \omega^l k_j| \le c_0  \}, \, j\in \{a,b,c,d\},\, l=0, 1,2$$
 around    critical points   $ \omega^l k_j$, where
\begin{equation}\label{p1c0}
c_0:= \min \left\{\frac{\sqrt{7}-\sqrt{3}}{4},  2|k_1-k_a| t^{\delta_1}, 2|k_3-k_c|t^{\delta_1} \right\},
\end{equation}
with  $\frac{1}{27}<\delta_1<\frac{1}{12}$.

In the transition zone $\mathcal{T}_1^R$,  it follows from (\ref{hjfd1})-(\ref{hjfd2}) that,  for   $t$  large enough,  the saddle points $w^l k_j,\ j=1,\cdots, 8$  fall  in
$\mathrm{U} :=\mathop{\cup}\limits_{j \in \{a,b,c,d\}} (\mathrm{U}_{j0} \cup \mathrm{U}_{j1} \cup \mathrm{U}_{j2})$.
Moreover,   we have
	\begin{align*}
	&|k_1 - k_a|, |k_2 -  k_a|, |k_3 - k_b|,|k_4 -  k_b| \le \sqrt{2C} t^{-1/3}, \\
	&| k_5 -  k_c|, | k_6 -  k_c|, | k_7 -  k_d|,| k_8 - k_d| \le \sqrt{2C} t^{-1/3},
\end{align*}
which  reveals that
\begin{align}
c_0 \lesssim t^{\delta_1-1/3} \to 0, \ t \to \infty. \label{cc1}
\end{align}

Now we construct the solution $M^{rhp}(k)$ as follows:
\begin{align}\label{p1pdesM2RHP}
M^{rhp}(k)=\left\{
\begin{aligned}
    &E(k), \quad k\notin \mathrm{U},\\
    &E(k)M^{loc}(k), \quad k\in \mathrm{U},
\end{aligned}
\right.
\end{align}
where  $M^{loc}(k)$ is the solution of a local model, and the error function $E(k)$ is the solution of a small-norm RH problem.
Using  (\ref{p1v2}) and  \eqref{p1c0},  also recalling the definition \eqref{oritheta12} of $\theta_{12}(k)$, we have
\begin{equation}
 \| V^{(3)}(k)-I\|_{L^\infty(\Sigma^{(3)} \setminus \mathrm{U})} =\mathcal{O}(e^{-ct}),
\end{equation}
where $c$ is a positive constant.
This estimate implies the necessity of constructing a local model  within $\mathrm{U}$.

\subsubsection{Local models}\label{p1lomod}

Denote the local jump contour $\Sigma^{loc} := \Sigma^{(3)}\cap \mathrm{U}$.
Then  the solution $M^{loc}(k)$ is approximated by the sum of the separate local models in the neighborhood
 of $\mathrm{U}_{jl}, \ j\in \{a,b,c,d\}, l=0,1,2$.
On each contour  $\Sigma_{jl} := \Sigma^{(3)} \cap \mathrm{U}_{jl},\ l=0,1,2$, we define the local models  $M_{jl}(k), \ j\in \{a,b,c,d\},\, l=0,1,2$. Taking RH problem for $M_{j0}(k)$, $j=a,b,c,d$,  as an example, the other cases can be given similarly.

\begin{RHP}\label{p1mlolj}
Find a $3\times 3$ matrix-valued function $M_{j0}(k):=M_{j0}(k;y,t)$ such that
\begin{itemize}
\item $M_{j0}(k)$ is analytic in $\mathbb{C} \setminus \Sigma_{j0}$.
\item For $k \in \Sigma_{j0}$, $M_{j0,+}(k)=M_{j0,-}(k)V_{j0}(k)$
where $V_{j0}(k) = V^{(3)}(k)|_{k \in \Sigma_j }$.
\item As  $k \to \infty$  in $\mathbb{C} \setminus \Sigma_j$,
$M_{j0}(k)=I+\mathcal{O}(k^{-1}).$
\end{itemize}
\end{RHP}

  Next, we show that   each  local model $M_{j0}(k)$ can asymptotically  match   the model RH problem for  $M^{L}(\hat k)$ in \ref{appx},  which is equivalent to the  Painlev\'e model. For this purpose, we
introduce the following localized scaling variables.
\begin{itemize}
\item For $k$ close to $k_a$,
\begin{equation}\label{ttheta12}
t\theta_{12}(k) = t\theta_{12}(k_a) - \frac{8}{3}\hat{k}^3 -2 s \hat{k} +\mathcal{O}(\hat{k}^4 t^{-\frac{1}{3}}),
\end{equation}
where
\begin{align}
 & t\theta_{12}(k_a)=\frac{\sqrt{3}}{4}(4y- 3t),\ \hat{k}= c_a t^{ \frac{1}{3}} (k-k_a),\label{scal1}\\
  &s = \frac{ 2^{2/3}(-7+\sqrt{21})}{3^{2/3} (98-21\sqrt{21})^{1/3}} \left( \hat{\xi} +\frac{3}{8}\right) t^{\frac{2}{3}},
\end{align}
with
\begin{equation}\label{p1ca}
c_a =\frac{3^{2/3} }{2^{8/3}}(98-21\sqrt{21})^{1/3}.
\end{equation}

\item For $k$ close to $k_b$,
\begin{equation}\label{p1akb}
t\theta_{12}(k) = t\theta_{12}(k_b) - \frac{8}{3}\check{k}^3 -2 s \check{k} +\mathcal{O}(\check{k}^4 t^{-\frac{1}{3}}),
\end{equation}
where
\begin{equation}\label{p1akb1}
  t\theta(k_{b}) = -t\theta(k_{a}), \quad \check{k}= c_b t^{ \frac{1}{3}} (k-k_b),
\end{equation}
and $s$ is defined as  (\ref{scal1})
with
\begin{equation}\label{p1cb}
c_b= \frac{3^{2/3} }{2^{8/3}}(98+21\sqrt{21})^{1/3}.
\end{equation}
\item For  $k$ close to $k_c$, following the symmetry $k_c=-k_b$,
 \begin{equation}\label{p1akc1}
t\theta_{12}(k) = t\theta_{12}(k_c) - \frac{8}{3}\tilde{k}^3 -2 s \tilde{k} +\mathcal{O}(\tilde{k}^4 t^{-\frac{1}{3}}),
\end{equation}
where $s$ is defined as  (\ref{scal1}) and
\begin{equation}\label{p1akc2}
 \tilde{k}= c_c t^{ \frac{1}{3}} (k-k_c),\quad \theta_{12}(k_c) =- \theta_{12}(k_b), \quad c_c= c_b.
\end{equation}

\item For  $k$ close to $k_d$, following the symmetry $k_d=-k_a$,
 \begin{equation}\label{p1akd1}
t\theta_{12}(k) = t\theta_{12}(k_d) - \frac{8}{3}\breve{k}^3 -2 s\breve{k} +\mathcal{O}(\breve{k}^4 t^{-\frac{1}{3}}),
\end{equation}
where $s$ is defined as  (\ref{scal1}) and
\begin{equation}\label{p1akd2}
  \breve{k}= c_d t^{ \frac{1}{3}} (k-k_d),\quad \theta_{12}(k_d) =- \theta_{12}(k_a), \quad c_d = c_a.
\end{equation}

\end{itemize}

 As an illustrative example,  we use the  local model RH problem  \ref{p1mlolj}  for $M_{a0}(k)$  to match  the model RH problem in \ref{appx}. Other local models can be constructed in a similar manner.


\subparagraph{Step I: Scaling.}
Under   a  new scaled variable $\hat{k}  = c_a t^{\frac{1}{3}}(k-k_a),$
 the contour $\Sigma_{a0}$ is changed into
 a contour $\hat{\Sigma}_{a}$ in the $\hat{k}$-plane
\begin{equation*}
  \hat{\Sigma}_{a} := \left( \mathop{\cup}\limits_{j=1}^2 (\hat{\Sigma}_{j} \cup \hat{\Sigma}_{j}^*) \right) \cup (\hat{k}_1, \hat{k}_2),
\end{equation*}
where
\begin{align*}
&\hat{k}_j = c_a t^{\frac{1}{3}}(k_j-k_a),\ j=1,2; \ \ \hat{\Sigma}_{1} = \{\hat{k}: \hat{k}-\hat{k}_1 = l e^{\mathrm{i}  \varphi  }, 0\le l \le c_0  c_a t^{\frac{1}{3}}\},\\
   & \hat{\Sigma}_{2} = \{\hat{k}: \hat{k}-\hat{k}_2 = l e^{\mathrm{i} (\pi-\varphi) }, 0\le l \le c_0 c_a t^{\frac{1}{3}}\}.
\end{align*}

Further,   RH problem  \ref{p1mlolj}   becomes   the following RH problem in the $\hat{k}$-plane.

\begin{RHP}\label{p1mloc2}
Find a $3\times 3$ matrix-valued function $ {M}_{a0}(\hat{k}):=  {M}_{a0}(\hat{k}; y,t)$ such that
\begin{itemize}
\item  $ {M}_{a0}(\hat{k})$ is analytic in $\mathbb{C} \setminus \hat{\Sigma}_{a}$.
\item  For  $\hat{k}\in \hat{\Sigma}_{a}$, $ {M}_{a0,+}(\hat{k})= {M}_{a0,-}(\hat{k}) {V}_{a}(\hat{k})$, where
\begin{equation}
 {V}_a(\hat{k}) = \begin{cases}
            \left(\begin{array}{ccc} 1&0&0 \\ d(k_j)e^{-\mathrm{i} t\theta_{12}(k_a)} e^{-\mathrm{i}t\theta_{12} (c_a^{-1} t^{-\frac{1}{3}}\hat{k} +k_a) }&1&0 \\ 0&0&1\end{array}  \right), \, \hat{k} \in \hat{\Sigma}_{j}, \, j=1,2,\\
             \left(\begin{array}{ccc} 1& -\bar{d}(k_j) e^{\mathrm{i} t\theta_{12}(k_a)} e^{\mathrm{i}t\theta_{12} (c_a^{-1} t^{-\frac{1}{3}}\hat{k} +k_a) }&0 \\ 0&1&0 \\ 0&0&1\end{array}  \right), \,  \hat{k} \in \hat{\Sigma}_{j}^*, \, j=1,2,\\
             V^{(3)}(c_a^{-1} t^{-\frac{1}{3}}\hat{k} +k_a), \,  \hat{k} \in (\hat{k}_1, \hat{k}_2).
           \end{cases}
\end{equation}
\item  $ {M}_{a0}(\hat{k})=I+\mathcal{O}(\hat{k}^{-1}), \quad  \hat{k}\rightarrow\infty.$
\end{itemize}
\end{RHP}

\subparagraph{Step \uppercase\expandafter{\romannumeral2}: Matching   the model RH problem.}

 According to (\ref{ttheta12}),  we show the following proposition.
\begin{proposition} \label{p1mathch}
As $t \to \infty$,
\begin{equation}\label{mp1}
  {M}_{a0}(\hat{k}) =    \mathcal{A}  M^\mathrm{L}(\hat{k})  \mathcal{A}^{-1} + \mathcal{O}(t^{-\frac{1}{3}+4 \delta_1}),
\end{equation}
where $M^{\mathrm{L}}(\hat{k})$ is the solution of RH problem \ref{modelp2} with  $c_1=\mathrm{i}|d(k_a)|,$
and
\begin{equation}\label{hA}
\mathcal{A} =   \left(\begin{array}{ccc}
e^{-\mathrm{i} \left(\frac{\varphi_a}{2} -\frac{\pi}{4} \right)} & 0 &0 \\
0 & e^{\mathrm{i} \left(\frac{\varphi_a}{2} -\frac{\pi}{4} \right)} & 0 \\
0 & 0 & 1
\end{array} \right)
\end{equation}
with $\varphi_a = \arg d(k_a)-t\theta_{12}(k_a)$.
\end{proposition}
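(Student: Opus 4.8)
The plan is to freeze the coefficients of the local jump matrix $V_a(\hat k)$ in RH problem \ref{p1mloc2}, extract the universal cubic phase, and thereby identify the leading model with the conjugated Painlev\'e II parametrix $\mathcal{A}M^{\mathrm{L}}(\hat k)\mathcal{A}^{-1}$ of RH problem \ref{modelp2}; the closeness of the solutions then follows from a small-norm argument applied to the quotient $M_{a0}(\mathcal{A}M^{\mathrm{L}}\mathcal{A}^{-1})^{-1}$. Everything rests on the local expansion \eqref{ttheta12} of the phase near $k_a$.

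First I would substitute \eqref{ttheta12} into the off-diagonal exponentials of $V_a(\hat k)$. With $k = c_a^{-1}t^{-1/3}\hat k + k_a$, the oscillatory factor reduces to the universal cubic phase $e^{\mathrm{i}(\frac{8}{3}\hat k^{3} + 2s\hat k)}$ of RH problem \ref{modelp2}, times a constant determined by $t\theta_{12}(k_a)$ and a remainder $1 + \mathcal{O}(\hat k^{4}t^{-1/3})$. Next I would freeze the amplitude, replacing $d(k_j)$ by $d(k_a)$; since $|k_j - k_a| \lesssim t^{-1/3}$ and $d$ is smooth near $k_a$, this incurs only $\mathcal{O}(t^{-1/3})$. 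Writing $d(k_a) = |d(k_a)|e^{\mathrm{i}\arg d(k_a)}$ and collecting the constant phases, the frozen $(2,1)$-entry takes the form $|d(k_a)|e^{\mathrm{i}\varphi_a}e^{\mathrm{i}(\frac{8}{3}\hat k^{3}+2s\hat k)}$, where $\varphi_a = \arg d(k_a) - t\theta_{12}(k_a)$ is exactly the residual constant phase.

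The diagonal conjugation by $\mathcal{A}$ in \eqref{hA} is tailored to remove this residual phase. Since $\mathcal{A} = \mathrm{diag}\big(e^{-\mathrm{i}(\varphi_a/2 - \pi/4)}, e^{\mathrm{i}(\varphi_a/2 - \pi/4)}, 1\big)$, the conjugated $(2,1)$-entry of $\mathcal{A}^{-1}V_a\mathcal{A}$ acquires the factor $e^{-2\mathrm{i}(\varphi_a/2 - \pi/4)} = \mathrm{i}\,e^{-\mathrm{i}\varphi_a}$, which turns $|d(k_a)|e^{\mathrm{i}\varphi_a}$ into $\mathrm{i}|d(k_a)| = c_1$; the factor $\mathrm{i}$ is produced by the $\pi/4$ shift in the exponents of $\mathcal{A}$. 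Hence $\mathcal{A}^{-1}V_a\mathcal{A}$ coincides, up to the two error sources above, with the jump of RH problem \ref{modelp2} for $c_1 = \mathrm{i}|d(k_a)|$, and the remaining entries follow by the symmetries of $V^{(3)}$. Equivalently, $\mathcal{A}M^{\mathrm{L}}\mathcal{A}^{-1}$ solves an RH problem whose jump matches that of $M_{a0}$ to leading order.

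To conclude I would set $\mathcal{E}(\hat k) := M_{a0}(\hat k)\big(\mathcal{A}M^{\mathrm{L}}(\hat k)\mathcal{A}^{-1}\big)^{-1}$, which satisfies a small-norm RH problem whose jump differs from the identity by the frozen-amplitude error $\mathcal{O}(t^{-1/3})$ and the cubic-phase remainder $\mathcal{O}(\hat k^{4}t^{-1/3})$. The second is the crucial one: from \eqref{p1c0} together with $|k_j - k_a| \lesssim t^{-1/3}$ one gets $c_0 \lesssim t^{\delta_1 - 1/3}$, so on the truncated contour $|\hat k| = c_a t^{1/3}|k - k_a| \lesssim t^{\delta_1}$ and the remainder is $\mathcal{O}(t^{4\delta_1 - 1/3})$, which tends to zero precisely because $\delta_1 < 1/12$. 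I would also account for the truncation of $\hat\Sigma_a$ at length $\sim t^{\delta_1}$ against the infinite contour of RH problem \ref{modelp2}: along the tails the rays lie in the decay sectors of $\mathrm{Im}\,\theta_{12}$, so the extension is exponentially small. Combining the pointwise bounds with the corresponding $L^{1}\cap L^{2}\cap L^{\infty}$ estimates and invoking the standard small-norm theory yields $\mathcal{E} = I + \mathcal{O}(t^{-1/3+4\delta_1})$, which is \eqref{mp1}. The main obstacle is ensuring that all errors are uniform in $\hat k$ over the growing contour, and it is exactly this uniformity that forces the restriction $\delta_1 < 1/12$.
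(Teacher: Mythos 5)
Your proposal is correct and follows essentially the same route as the paper: conjugate by $\mathcal{A}$ so that the frozen entry $|d(k_a)|e^{\mathrm{i}\varphi_a}$ becomes $c_1=\mathrm{i}|d(k_a)|$, bound the jump discrepancy by the amplitude-freezing error $\mathcal{O}(t^{-1/3}\hat{k})$ and the cubic-phase remainder $\mathcal{O}(\hat{k}^4t^{-1/3})\lesssim t^{-1/3+4\delta_1}$ using $|\hat{k}|\lesssim t^{\delta_1}$, and conclude by the small-norm theorem; whether one phrases this via the conjugated unknown $\mathcal{A}^{-1}M_{a0}\mathcal{A}$ (as the paper does) or via the quotient $M_{a0}\bigl(\mathcal{A}M^{\mathrm{L}}\mathcal{A}^{-1}\bigr)^{-1}$ (as you do) is immaterial. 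Your explicit treatment of the truncated contour versus the infinite model contour is a point the paper leaves implicit, and it is handled correctly by the exponential decay along the tails.
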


\begin{proof}
Let
 $\hat{M}(\hat{k}) = \mathcal{A}^{-1} {M}_{a0}(\hat{k}) \mathcal{A}$, which satisfies the jump relation
 $$\hat{M}_+(\hat{k})=\hat{M}_-(\hat{k}) \hat{V}(\hat{k}), $$
 where   the jump matrix is  $ \hat{V}(\hat{k})=  \mathcal{A}^{-1}  {V}_a(\hat{k}) \mathcal{A}$.
 Next, we show that $\hat{M}(\hat{k})$ can be approximated by $M^\mathrm{L}(\hat{k}) $  of  the model RH problem \ref{modelp2}.
It is enough to estimate $\hat{V}(\hat{k}) - V^{\mathrm{L}}(\hat{k})$.

 For $\hat{k} \in  [\hat k_2, \hat k_1]$,  noticing that  $|e^{\mathrm{i}(\frac{8\hat{k}^3}{3} +2 s \hat{k})}|=|e^{-\mathrm{i}t\theta_{12}(k)}| = 1$,  we get
 \begin{align}
& \left| \hat{V}(\hat{k}) - V^{\mathrm{L}}(\hat{k}) \right| \le
  \left| \hat{d}(\hat{k})e^{-\mathrm{i}t\theta_{12}}-d(k_a)e^{\mathrm{i}(\frac{8\hat{k}^3}{3} + 2s \hat{k})} \right| \lesssim \left| \hat{d}(\hat{k}) - d(k_a) \right|\nonumber\\
  & \le  \Vert d'(k_a)\Vert_{L^\infty(\Sigma^{\mathrm{L}}_{5})} \left|   c_a^{-1} t^{-\frac{1}{3}}\hat{k} \right| \lesssim  t^{-\frac{1}{3}}\hat{k}. \label{pier}
 \end{align}
Further by  (\ref{cc1}) and (\ref{scal1}), we have
\begin{equation}
|\hat{k}| =|c_a t^{ \frac{1}{3}} (k-k_a)| \lesssim t^{\delta_1}, \label{ooero}
\end{equation}
 which together with (\ref{pier})   gives the estimate
\begin{equation}
   \left| \hat{V}(\hat{k}) - V^{\mathrm{L}} (\hat{k})\right|\lesssim t^{-\frac{1}{3}+\delta_1}.\label{gg1}
\end{equation}
For $\hat{k} \in  \hat\Sigma_{1}$,    since   $ {\rm{Re}} \left( \mathrm{i}(\frac{8\hat{k}^3}{3} +2 s \hat{k})\right)<0  $,
 by using  (\ref{ooero}),  we have
\begin{align*}
	\left|e^{-\mathrm{i}t\theta_{12}} -e^{\mathrm{i}(\frac{8\hat{k}^3}{3} + 2s \hat{k})} \right| \le  \left| e^{\mathcal{O}(\hat{k}^4t^{-1/3})} -1 \right| \lesssim  t^{-\frac{1}{3}+4\delta_1},
	\end{align*}
by which we  obtain that
\begin{equation}
 \left|\hat{V}(\hat{k}) - V^{\mathrm{L}}(\hat{k}) \right| \le \left| |d(k_1)|e^{-\mathrm{i}t\theta_{12}}-|d(k_a)|e^{\mathrm{i}(\frac{8\hat{k}^3}{3} +2 s \hat{k})} \right| \lesssim t^{-\frac{1}{3}+4\delta_1}. \label{gg2}
\end{equation}
 In a similar way, we have the estimate
\begin{equation*}
 \left|\hat{V}(\hat{k}) - V^{\mathrm{L}}(\hat{k}) \right|   \lesssim t^{-\frac{1}{3}+4\delta_1}, \  \hat k\in \hat\Sigma_2\cup \hat\Sigma_1^*\cup\hat\Sigma_2^*,
\end{equation*}
 which together with (\ref{gg1}) and (\ref{gg2})  gives
\begin{equation*}
 \left|\hat{V}(\hat{k}) - V^{\mathrm{L}}(\hat{k}) \right|   \lesssim t^{-\frac{1}{3}+4\delta_1}, \  \hat k\in \hat\Sigma_a.
\end{equation*}
By using the small-norm theorem, we obtain the   relation  (\ref{mp1}).

\end{proof}

As a corollary of Proposition \ref{p1mathch}, we have the following result.

\begin{corollary} As $\hat{k} \to \infty$,
\begin{equation}
  {M}_{a0}(\hat{k}) =  I + \frac{ {M}_{a0}^{(1)} (s)}{\hat{k}} + \mathcal{O}(\hat{k}^{-2}),
 \end{equation}
 where
 \begin{equation}\label{p1ma1}
 {M}_{a0}^{(1)} (s) = \frac{\mathrm{i}}{2}    \left(\begin{array}{ccc} -\int_s^\infty v^2(\varsigma) \mathrm{d} \varsigma & v(s)e^{-\mathrm{i}\varphi_a}&0 \\ -v(s)e^{\mathrm{i}\varphi_a} &\int_s^\infty v^2(\varsigma) \mathrm{d}\varsigma&0 \\ 0&0&0\end{array}  \right) + \mathcal{O}(t^{-\frac{1}{3}+4 \delta_1}),
 \end{equation}
 with $v(s)$ be the unique solution of Painlev\'{e} \uppercase\expandafter{\romannumeral2} equation \eqref{p23}, fixed by the boundary condition
 \begin{equation}
 v(s) \sim - |r(k_a)| \mathrm{Ai}(s), \quad s \to +\infty.
 \end{equation}
\end{corollary}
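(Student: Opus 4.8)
The plan is to read off the $\hat k^{-1}$ coefficient directly from the matching identity \eqref{mp1} of Proposition \ref{p1mathch}, so that the corollary is essentially the large-$\hat k$ limit of that estimate. First I would recall from \ref{appx} the large-$\hat k$ expansion of the model solution $M^{\mathrm L}(\hat k)$ of RH problem \ref{modelp2}: since that problem is the one equivalent to the Flaschka--Newell formulation of the Painlev\'e \uppercase\expandafter{\romannumeral2} equation, its solution admits an expansion $M^{\mathrm L}(\hat k)=I+M^{\mathrm L,(1)}(s)\hat k^{-1}+\mathcal O(\hat k^{-2})$ whose coefficient is built from the transcendent $v(s)$, with purely imaginary diagonal entries $\mp\tfrac{\mathrm i}{2}\int_s^\infty v^2(\varsigma)\,\mathrm d\varsigma$ and real off-diagonal entries $\tfrac12 v(s)$ (the third row and column being trivial). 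This is the only external input. The parameter value $c_1=\mathrm i|d(k_a)|$ imposed in Proposition \ref{p1mathch} then fixes, through the connection formula for Painlev\'e \uppercase\expandafter{\romannumeral2} recorded in \ref{appx}, the boundary behaviour $v(s)\sim -|r(k_a)|\,\mathrm{Ai}(s)$ as $s\to+\infty$, after invoking the identity $|d(k_a)|=|r(k_a)|$ that follows from the modulus of the factor $(T_1/T_2)_+$ at the stationary point.

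Next I would conjugate. Because $\mathcal A$ in \eqref{hA} is diagonal and independent of $\hat k$, substituting the expansion of $M^{\mathrm L}$ into \eqref{mp1} yields
\begin{equation*}
M_{a0}(\hat k)=I+\frac{\mathcal A\,M^{\mathrm L,(1)}(s)\,\mathcal A^{-1}}{\hat k}+\mathcal O(\hat k^{-2})+\mathcal O(t^{-\frac13+4\delta_1}).
\end{equation*}
Writing $\mathcal A=\mathrm{diag}\left(e^{-\mathrm i\beta},e^{\mathrm i\beta},1\right)$ with $\beta=\tfrac{\varphi_a}{2}-\tfrac{\pi}{4}$, conjugation leaves the diagonal entries of $M^{\mathrm L,(1)}$ untouched, while it multiplies the $(1,2)$ and $(2,1)$ entries by $e^{-2\mathrm i\beta}=\mathrm i\,e^{-\mathrm i\varphi_a}$ and $e^{2\mathrm i\beta}=-\mathrm i\,e^{\mathrm i\varphi_a}$, respectively. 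The extra phases $\pm\mathrm i$ produced by the $\pi/4$ shift in $\beta$ convert the real entries $\tfrac12 v(s)$ into $\pm\tfrac{\mathrm i}{2}v(s)e^{\mp\mathrm i\varphi_a}$, which is exactly the claimed form of $M_{a0}^{(1)}(s)$ in \eqref{p1ma1}. Thus the whole corollary reduces to this elementary diagonal-conjugation bookkeeping once the model expansion is in hand.

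The one point that genuinely needs care is the error term, since \eqref{mp1} is an estimate on the matrices whereas the corollary asserts a bound on their $\hat k^{-1}$ moments. To carry the $\mathcal O(t^{-1/3+4\delta_1})$ control down to the coefficient I would argue at the level of the small-norm problem already used in Proposition \ref{p1mathch}: the ratio $\hat M(\hat k)\bigl(M^{\mathrm L}(\hat k)\bigr)^{-1}$ solves an RH problem whose jump differs from the identity by $\mathcal O(t^{-1/3+4\delta_1})$ in $L^1\cap L^2\cap L^\infty(\hat\Sigma_a)$, which is precisely the content of the estimates \eqref{gg1}--\eqref{gg2}. The associated Cauchy/Beals--Coifman representation then bounds its first moment by the same order, so the difference between the $\hat k^{-1}$ coefficient of $M_{a0}$ and that of $\mathcal A M^{\mathrm L}\mathcal A^{-1}$ is again $\mathcal O(t^{-1/3+4\delta_1})$; this accounts for the error displayed inside \eqref{p1ma1}. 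I expect this moment-level transfer of the small-norm bound to be the main (though essentially technical) obstacle, everything else being the algebra of the diagonal conjugation.
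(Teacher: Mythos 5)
Your proposal is correct and takes essentially the same route as the paper, which states this corollary as an immediate consequence of Proposition \ref{p1mathch} combined with the appendix formulas \eqref{posee} and the boundary condition $v(s)\sim -\im c_1\,\mathrm{Ai}(s)$: that is exactly your diagonal-conjugation computation together with the identity $|d(k_a)|=|r(k_a)|$ (which indeed holds, since the Blaschke-type factors have unit modulus on $\mathbb{R}$ and $|\delta_+|^2=1-|r|^2$, so $|(T_1/T_2)_+(k_a)|=1-|r(k_a)|^2$). Your small-norm/Beals--Coifman argument transferring the $\mathcal{O}(t^{-1/3+4\delta_1})$ bound from the uniform matrix estimate \eqref{mp1} to the $\hat{k}^{-1}$ coefficient fills in a step the paper leaves implicit, and it is the correct way to justify the error term inside \eqref{p1ma1}.
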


In a similar way to $ {M}_{a0} (\hat{k})$,  with the help of \eqref{p1akb}-\eqref{p1cb},  we obtain
\begin{equation}
  {M}_{b0}(\check{k}) =  I + \frac{ {M}_{b0}^{(1)}(s)}{\check{k}} + \mathcal{O}(\check{k}^{-2}), \quad \check{k} \to \infty
 \end{equation}
where
\begin{equation}\label{p1mb1}
 {M}_{b0}^{(1)}(s)= \frac{\mathrm{i}}{2}    \left(\begin{array}{ccc} -\int_s^\infty v^2(\varsigma) \mathrm{d} \varsigma & v(s)e^{-\mathrm{i}\varphi_b}&0 \\ -v(s)e^{\mathrm{i}\varphi_b} &\int_s^\infty v^2(\varsigma) \mathrm{d}\varsigma&0 \\ 0&0&0\end{array}  \right) + \mathcal{O}(t^{-\frac{1}{3}+4 \delta_1}),
 \end{equation}
with  the argument $\varphi_b =  \arg d(k_b)-t \theta(k_b)$.
With the help of \eqref{p1akc1}-\eqref{p1akc2}, we have
\begin{equation}
 {M}_{c0}(\tilde{k}) =  I + \frac{ {M}_{c0}^{(1)}(s)}{\tilde{k}} + \mathcal{O}(\tilde{k}^{-2}), \quad \tilde{k} \to \infty
\end{equation}
where
\begin{equation}\label{p1mc1}
 {M}_{c0}^{(1)}(s)= \frac{\mathrm{i}}{2}    \left(\begin{array}{ccc} -\int_s^\infty v^2(\varsigma) \mathrm{d} \varsigma & v(s)e^{-\mathrm{i}\varphi_c}&0 \\ -v(s)e^{\mathrm{i}\varphi_c} &\int_s^\infty v^2(\varsigma) \mathrm{d}\varsigma&0 \\ 0&0&0\end{array}  \right) + \mathcal{O}(t^{-\frac{1}{3}+4 \delta_1}),
\end{equation}
with  the argument $\varphi_c =  \arg d(k_c)-t \theta(k_c)$.
With the help of \eqref{p1akd1}-\eqref{p1akd2}, we have
\begin{equation}
  {M}_{d0} ( \breve{k}) =  I + \frac{ {M}_{d0}^{(1)} (s)}{ \breve{k}} + \mathcal{O}( \breve{k}^{-2}), \quad  \breve{k} \to \infty
\end{equation}
where
\begin{equation}\label{p1mc3}
 {M}_{d0}^{(1)}(s)= \frac{\mathrm{i}}{2}    \left(\begin{array}{ccc} -\int_s^\infty v^2(\varsigma) \mathrm{d} \varsigma & v(s)e^{-\mathrm{i}\varphi_d}&0 \\ -v(s)e^{\mathrm{i}\varphi_d} &\int_s^\infty v^2(\varsigma) \mathrm{d}\varsigma&0 \\ 0&0&0\end{array}  \right) + \mathcal{O}(t^{-\frac{1}{3}+4 \delta_1}),
\end{equation}
with  the argument $\varphi_d =  \arg d(k_d)-t \theta(k_d)$.

Following the above steps, we also can obtain other solutions $M_{jl}(k), \ j \in \{ a,b,c,d\},\ l=1,2$.
Finally, the solution $M^{loc}(k)$ for the local model is  reconstructed as follows.
\begin{proposition}\label{promloc}As $t \to \infty$,
\begin{equation}\label{p1mlo}
  M^{loc}(k) = I + t^{-1/3} M^{loc}_1(k,s)+\mathcal{O}(t^{-2/3+4 \delta_1}),
\end{equation}
where
\begin{equation}
 M^{loc}_1(k,s)=	   \sum\limits_{j=a,b,c,d}  \left( \frac{M_{j }^{(1)}(s)}{c_j(k-   k_j)} +     \frac{\omega \Gamma_3  \overline{M_{j }^{(1)}(s)}  \Gamma_3 }{c_j(k- \omega  k_j)} +
 \frac{   \omega ^2 \Gamma_2  \overline{M_{j }^{(1)}(s)}  \Gamma_2}{c_j(k- \omega^2 k_j)}  \right),
\end{equation}
in which  $c_j$ are given by \eqref{p1ca} and \eqref{p1cb} respectively,  while for $j=a,b,c,d$,
 \begin{equation}\label{p1mj01}
  M_{j }^{(1)}(s) = \frac{\mathrm{i}}{2}    \left(\begin{array}{ccc} -\int_s^\infty v^2(\varsigma) \mathrm{d} \varsigma & v(s)e^{-\mathrm{i}\varphi_j}&0 \\ -v(s)e^{\mathrm{i}\varphi_j} &\int_s^\infty v^2(\varsigma) \mathrm{d}\varsigma&0 \\ 0&0&0\end{array}  \right).
  \end{equation}

\end{proposition}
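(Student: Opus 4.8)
The plan is to assemble $M^{loc}(k)$ from the twelve individual local parametrices $M_{jl}(k)$, $j\in\{a,b,c,d\}$, $l=0,1,2$, each solving its own RH problem of the type \ref{p1mlolj} in the disk $\mathrm{U}_{jl}$. Since \eqref{cc1} together with the choice \eqref{p1c0} of $c_0$ guarantees that the disks $\mathrm{U}_{jl}$ are mutually disjoint and separated by an $\mathcal{O}(1)$ distance, and since each parametrix differs from $I$ only by $\mathcal{O}(t^{-1/3})$ away from its own center, the global local model is, to the required order, the identity plus the sum of the individual deviations. The rational pole structure appearing in $M^{loc}_1(k,s)$ is then nothing but the large-argument tails of the $M_{jl}$ transported back to the $k$-plane.

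First I would treat the four base models $M_{j0}$, $j\in\{a,b,c,d\}$. Proposition \ref{p1mathch} and the corollary following it already furnish, in the scaled variable $\hat{k}=c_j t^{1/3}(k-k_j)$, the expansion $M_{j0}(\hat{k})=I+M_{j0}^{(1)}(s)/\hat{k}+\mathcal{O}(\hat{k}^{-2})$ with $M_{j0}^{(1)}(s)=M_j^{(1)}(s)+\mathcal{O}(t^{-1/3+4\delta_1})$ and $M_j^{(1)}(s)$ the clean Painlev\'e matrix \eqref{p1mj01}. Undoing the scaling, so that $\hat{k}^{-1}=t^{-1/3}/(c_j(k-k_j))$, yields for $k$ at fixed distance from $k_j$ the local expansion
\begin{equation*}
M_{j0}(k)=I+\frac{t^{-1/3}}{c_j(k-k_j)}\,M_j^{(1)}(s)+\mathcal{O}(t^{-2/3+4\delta_1}),
\end{equation*}
where the matching error $\mathcal{O}(t^{-1/3+4\delta_1})$ has been absorbed after multiplication by the $t^{-1/3}$ prefactor and the $\mathcal{O}(\hat{k}^{-2})$ tail has become $\mathcal{O}(t^{-2/3})$.

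Second, the remaining eight parametrices require no new analysis: they are fixed by the symmetries \eqref{S1}, which are inherited by $M^{rhp}$ and hence by the local models. Applying $M(k)=\Gamma_3\overline{M(\omega\bar{k})}\Gamma_3$ at $k=\omega k_j$ maps the offset $k-\omega k_j$ to its conjugate, and therefore, after using $\bar\omega=\omega^2$ and $\omega^{-2}=\omega$, produces precisely the contribution $\omega\Gamma_3\overline{M_j^{(1)}(s)}\Gamma_3/(c_j(k-\omega k_j))$; likewise $M(k)=\Gamma_2\overline{M(\omega^2\bar{k})}\Gamma_2$ gives the term $\omega^2\Gamma_2\overline{M_j^{(1)}(s)}\Gamma_2/(c_j(k-\omega^2 k_j))$ at $\omega^2 k_j$. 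Summing all twelve pieces reproduces $M^{loc}_1(k,s)$ exactly as stated; since every pole is evaluated at $\mathcal{O}(1)$ distance, the cross-interactions between distinct disks enter only at order $t^{-2/3}$.

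The main obstacle is the error bookkeeping: one must verify that the three sources of error---the matching error of Proposition \ref{p1mathch} rescaled by $t^{-1/3}$, the $\mathcal{O}(\hat{k}^{-2})$ subleading tail, and the inter-disk interactions---all collapse into the single stated remainder $\mathcal{O}(t^{-2/3+4\delta_1})$. This is exactly where the constraint $\tfrac{1}{27}<\delta_1<\tfrac{1}{12}$ is used: the upper bound $\delta_1<\tfrac{1}{12}$ ensures $t^{-2/3+4\delta_1}\ll t^{-1/3}$, so the correction is genuinely subleading to $t^{-1/3}M^{loc}_1$, while the lower bound together with \eqref{p1c0} guarantees that all saddle points $\omega^l k_j$ actually lie inside $\mathrm{U}$, so that the Painlev\'e matching remains valid on each $\Sigma_{jl}$.
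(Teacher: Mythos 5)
Your proposal is correct and follows essentially the same route as the paper, which states Proposition \ref{promloc} as the summary of its local-model construction: the expansions of $M_{j0}$ from Proposition \ref{p1mathch} and its corollary, rewritten in the $k$-variable via $\hat{k}^{-1}=t^{-1/3}/(c_j(k-k_j))$, plus the symmetry relations \eqref{S1} (with $\Gamma_3$, $\Gamma_2$ and the factors $\omega$, $\omega^2$ arising exactly as in your computation) to generate the rotated contributions, and a sum over the twelve well-separated disks. Your explicit error bookkeeping—absorbing the matching error after the $t^{-1/3}$ prefactor, the $\mathcal{O}(\hat{k}^{-2})$ tail at fixed distance, and the $\mathcal{O}(t^{-2/3})$ cross terms—is more detailed than what the paper records, and is consistent with how \eqref{p1mlo} is subsequently used in Proposition \ref{p1este1}; the only slight misattribution is the role of the lower bound $\delta_1>\tfrac{1}{27}$, which is needed later to dominate the $\mathcal{O}(t^{-1/3-5\delta_1})$ remainder in the small-norm analysis rather than to place the saddle points inside $\mathrm{U}$ (that is automatic from the definition \eqref{p1c0} of $c_0$).
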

\subsubsection{Small-norm RH problem} \label{p1sne}

Denote the contour
\begin{equation}
\Sigma^{E} :=\Big(\Sigma^{(3)}\setminus \mathrm{U} \Big)\cup\partial \mathrm{U}.
\end{equation}
See Figure \ref{p2fige}. Then, the error function $E(k)$ defined by
\eqref{p1pdesM2RHP} satisfies the following
RH problem.

\begin{figure}[http]
	\centering
	\begin{tikzpicture}[scale=0.7]
	\draw[dotted](-7,0)--(7,0);
	\node  [right]  at (7.5,0) {\footnotesize$\mathbb{R}$};	
\node    at (4.1,6.7) {\footnotesize$\omega^2\mathbb{R}$};
\node    at (-4.1,6.7) {\footnotesize$\omega^2\mathbb{R}$};

	\draw [  ] (-2,0)--(-1.5,0.866);
	\draw [  ](2,0)-- (1.5,0.866);
	\draw[-latex,  ] (-2,0)--(-1.6,0.6928);
	\draw[-latex,  ] (1.5,0.866)--(1.85,0.2598);			
	\draw [  ] (0,0)--(-1.5,0.866);
	\draw [  ] (0,0)--(1.5,0.866);
	\draw[-latex,  ] (-1.5,0.866)--(-0.65,0.3753);
	\draw[-latex,  ] (0, 0)--(1, 0.5773);

	\draw [  ](-2,0) -- (-1.5,-0.866);
	\draw [  ](2,0)-- (1.5,-0.866);
	\draw[-latex,  ](-2,0)--(-1.6,-0.6928);
	\draw[-latex,  ](1.5,-0.866)--(1.85,-0.2598);	
	\draw [  ](0,0) -- (-1.5,-0.866);
	\draw [  ](0,0)-- (1.5,-0.866);
	\draw[-latex,  ](-1.5,-0.866)--(-0.65,-0.3753);
	\draw[-latex,  ](0,0)--(1,-0.5773);	
	
	\draw[  ](-1.5,0.866)--(-1.5,0);
	\draw[  ](-1.5,-0.866)--(-1.5,0);
	\draw[-latex,  ](-1.5,0)--(-1.5,0.5);
	\draw[-latex,  ](-1.5,0)--(-1.5,-0.5);

	\draw[  ](1.5,0.866)--(1.5,0);
	\draw[  ](1.5,-0.866)--(1.5,0);
	\draw[-latex,  ](1.5,0)--(1.5,0.5);
	\draw[-latex,  ](1.5,0)--(1.5,-0.5);

	\draw [  ](-3,0) -- (-4,1);
	\draw [  ](-5,0) -- (-4,1);		
	\draw[-latex,  ] (-5,0)--(-4.45,0.55);
	\draw[-latex,  ] (-4,1)--(-3.35,0.35);
	\draw [  ](-3,0) -- (-4,-1);
	\draw [  ](-5,0) -- (-4,-1);		
	\draw[-latex,  ] (-5,0)--(-4.45,-0.55);
	\draw[-latex,  ] (-4,-1)--(-3.35,-0.35);

	\draw [  ](-6,0) -- (-7,1);
	\draw [  ](6,0) -- (7,1);		
	\draw [-latex,   ](-7,1) -- (-6.5,0.5);
	\draw [-latex,   ](6,0) -- (6.5,0.5);

	\draw [  ](-6,0) -- (-7,-1);
	\draw [  ](6,0) -- (7,-1);		
	\draw [-latex,   ](-7,-1) -- (-6.5,-0.5);
	\draw [-latex,   ](6,0) -- (6.5,-0.5);
	
	\draw [  ](3,0) -- (4,1);
	\draw [  ](5,0) -- (4,1);		
	\draw[-latex,  ] (4,1)--(4.65,0.35);
	\draw[-latex,  ] (3,0)--(3.55,0.55);
	\draw [  ](3,0) -- (4,-1);
	\draw [  ](5,0) -- (4,-1);		
	\draw[-latex,  ] (4,-1)--(4.65,-0.35);
	\draw[-latex,  ] (3,-0)--(3.55,-0.55);

	\draw[  ](4,1)--(4,0);
	\draw[  ](4,-1)--(4,0);
	\draw[-latex,  ](4,0)--(4,0.5);
	\draw[-latex,  ](4,0)--(4,-0.5);
	
	\draw[  ](-4,1)--(-4,0);
	\draw[  ](-4,-1)--(-4,0);
	\draw[-latex,  ](-4,0)--(-4,0.5);
	\draw[-latex,  ](-4,0)--(-4,-0.5);

	\draw[  ](-3,0)--(-2,0);
	\draw[-latex,  ](-3,0)--(-2.3,0);
	\draw[  ](-6,0)--(-5,0);
	\draw[-latex,  ](-6,0)--(-5.25,0);
	\draw[  ](3,0)--(2,0);
	\draw[-latex,  ](2,0)--(2.5,0);
	\draw[  ](6,0)--(5,0);
	\draw[-latex,  ](5,0)--(5.4,0);

	\draw[dotted,rotate=60](-7,0)--(7,0);
	
	\draw [  ,rotate=60] (-2,0)--(-1.5,0.866);
	\draw [  ,rotate=60](2,0)-- (1.5,0.866);
	\draw[-latex,  ,rotate=60] (-2,0)--(-1.6,0.6928);
	\draw[-latex,  ,rotate=60] (1.5,0.866)--(1.85,0.2598);			
	\draw [  ,rotate=60] (0,0)--(-1.5,0.866);
	\draw [  ,rotate=60] (0,0)--(1.5,0.866);
	\draw[-latex,  ,rotate=60] (-1.5,0.866)--(-0.65,0.3753);
	\draw[-latex,  ,rotate=60] (0, 0)--(1, 0.5773);

	\draw [  ,rotate=60](-2,0) -- (-1.5,-0.866);
	\draw [  ,rotate=60](2,0)-- (1.5,-0.866);
	\draw[-latex,  ,rotate=60](-2,0)--(-1.6,-0.6928);
	\draw[-latex,  ,rotate=60](1.5,-0.866)--(1.85,-0.2598);	
	\draw [  ,rotate=60](0,0) -- (-1.5,-0.866);
	\draw [  ,rotate=60](0,0)-- (1.5,-0.866);
	\draw[-latex,  ,rotate=60](-1.5,-0.866)--(-0.65,-0.3753);
	\draw[-latex,  ,rotate=60](0,0)--(1,-0.5773);	
	
	\draw[  ,rotate=60](-1.5,0.866)--(-1.5,0);
	\draw[  ,rotate=60](-1.5,-0.866)--(-1.5,0);
	\draw[-latex,  ,rotate=60](-1.5,0)--(-1.5,0.5);
	\draw[-latex,  ,rotate=60](-1.5,0)--(-1.5,-0.5);

	\draw[  ,rotate=60](1.5,0.866)--(1.5,0);
	\draw[  ,rotate=60](1.5,-0.866)--(1.5,0);
	\draw[-latex,  ,rotate=60](1.5,0)--(1.5,0.5);
	\draw[-latex,  ,rotate=60](1.5,0)--(1.5,-0.5);

	\draw [  ,rotate=60](-3,0) -- (-4,1);
	\draw [  ,rotate=60](-5,0) -- (-4,1);		
	\draw[-latex,  ,rotate=60] (-5,0)--(-4.45,0.55);
	\draw[-latex,  ,rotate=60] (-4,1)--(-3.35,0.35);
	\draw [  ,rotate=60](-3,0) -- (-4,-1);
	\draw [  ,rotate=60](-5,0) -- (-4,-1);		
	\draw[-latex,  ,rotate=60] (-5,0)--(-4.45,-0.55);
	\draw[-latex,  ,rotate=60] (-4,-1)--(-3.35,-0.35);

	\draw [  ,rotate=60](-6,0) -- (-7,1);
	\draw [  ,rotate=60](6,0) -- (7,1);		
	\draw [-latex,   ,rotate=60](-7,1) -- (-6.5,0.5);
	\draw [-latex,   ,rotate=60](6,0) -- (6.5,0.5);

	\draw [  ,rotate=60](-6,0) -- (-7,-1);
	\draw [  ,rotate=60](6,0) -- (7,-1);		
	\draw [-latex,   ,rotate=60](-7,-1) -- (-6.5,-0.5);
	\draw [-latex,   ,rotate=60](6,0) -- (6.5,-0.5);
	
	\draw [  ,rotate=60](3,0) -- (4,1);
	\draw [  ,rotate=60](5,0) -- (4,1);		
	\draw[-latex,  ,rotate=60] (4,1)--(4.65,0.35);
	\draw[-latex,  ,rotate=60] (3,0)--(3.55,0.55);
	\draw [  ,rotate=60](3,0) -- (4,-1);
	\draw [  ,rotate=60](5,0) -- (4,-1);		
	\draw[-latex,  ,rotate=60] (4,-1)--(4.65,-0.35);
	\draw[-latex,  ,rotate=60] (3,-0)--(3.55,-0.55);

	\draw[  ,rotate=60](4,1)--(4,0);
	\draw[  ,rotate=60](4,-1)--(4,0);
	\draw[-latex,  ,rotate=60](4,0)--(4,0.5);
	\draw[-latex,  ,rotate=60](4,0)--(4,-0.5);
	
	\draw[  ,rotate=60](-4,1)--(-4,0);
	\draw[  ,rotate=60](-4,-1)--(-4,0);
	\draw[-latex,  ,rotate=60](-4,0)--(-4,0.5);
	\draw[-latex,  ,rotate=60](-4,0)--(-4,-0.5);

	\draw[  ,rotate=60](-3,0)--(-2,0);
	\draw[-latex,  ,rotate=60](-3,0)--(-2.3,0);
	\draw[  ,rotate=60](-6,0)--(-5,0);
	\draw[-latex,  ,rotate=60](-6,0)--(-5.25,0);
	\draw[  ,rotate=60](3,0)--(2,0);
	\draw[-latex,  ,rotate=60](2,0)--(2.5,0);
	\draw[  ,rotate=60](6,0)--(5,0);
	\draw[-latex,  ,rotate=60](5,0)--(5.4,0);

	\draw[dotted,rotate=120](-7,0)--(7,0);
	
	\draw [  ,rotate=120] (-2,0)--(-1.5,0.866);
	\draw [  ,rotate=120](2,0)-- (1.5,0.866);
	\draw[-latex,  ,rotate=120] (-2,0)--(-1.6,0.6928);
	\draw[-latex,  ,rotate=120] (1.5,0.866)--(1.85,0.2598);			
	\draw [  ,rotate=120] (0,0)--(-1.5,0.866);
	\draw [  ,rotate=120] (0,0)--(1.5,0.866);
	\draw[-latex,  ,rotate=120] (-1.5,0.866)--(-0.65,0.3753);

	\draw [  ,rotate=120](-2,0) -- (-1.5,-0.866);
	\draw [  ,rotate=120](2,0)-- (1.5,-0.866);
	\draw[-latex,  ,rotate=120](-2,0)--(-1.6,-0.6928);
	\draw[-latex,  ,rotate=120](1.5,-0.866)--(1.85,-0.2598);	
	\draw [  ,rotate=120](0,0) -- (-1.5,-0.866);
	\draw [  ,rotate=120](0,0)-- (1.5,-0.866);
	\draw[-latex,  ,rotate=120](0,0)--(1,-0.5773);	
	
	\draw[  ,rotate=120](-1.5,0.866)--(-1.5,0);
	\draw[  ,rotate=120](-1.5,-0.866)--(-1.5,0);
	\draw[-latex,  ,rotate=120](-1.5,0)--(-1.5,0.5);
	\draw[-latex,  ,rotate=120](-1.5,0)--(-1.5,-0.5);

	\draw[  ,rotate=120](1.5,0.866)--(1.5,0);
	\draw[  ,rotate=120](1.5,-0.866)--(1.5,0);
	\draw[-latex,  ,rotate=120](1.5,0)--(1.5,0.5);
	\draw[-latex,  ,rotate=120](1.5,0)--(1.5,-0.5);

	\draw [  ,rotate=120](-3,0) -- (-4,1);
	\draw [  ,rotate=120](-5,0) -- (-4,1);		
	\draw[-latex,  ,rotate=120] (-5,0)--(-4.45,0.55);
	\draw[-latex,  ,rotate=120] (-4,1)--(-3.35,0.35);
	\draw [  ,rotate=120](-3,0) -- (-4,-1);
	\draw [  ,rotate=120](-5,0) -- (-4,-1);		
	\draw[-latex,  ,rotate=120] (-5,0)--(-4.45,-0.55);
	\draw[-latex,  ,rotate=120] (-4,-1)--(-3.35,-0.35);

	\draw [  ,rotate=120](-6,0) -- (-7,1);
	\draw [  ,rotate=120](6,0) -- (7,1);		
	\draw [-latex,   ,rotate=120](-7,1) -- (-6.5,0.5);
	\draw [-latex,   ,rotate=120](6,0) -- (6.5,0.5);

	\draw [  ,rotate=120](-6,0) -- (-7,-1);
	\draw [  ,rotate=120](6,0) -- (7,-1);		
	\draw [-latex,   ,rotate=120](-7,-1) -- (-6.5,-0.5);
	\draw [-latex,   ,rotate=120](6,0) -- (6.5,-0.5);
	
	\draw [  ,rotate=120](3,0) -- (4,1);
	\draw [  ,rotate=120](5,0) -- (4,1);		
	\draw[-latex,  ,rotate=120] (4,1)--(4.65,0.35);
	\draw[-latex,  ,rotate=120] (3,0)--(3.55,0.55);
	\draw [  ,rotate=120](3,0) -- (4,-1);
	\draw [  ,rotate=120](5,0) -- (4,-1);		
	\draw[-latex,  ,rotate=120] (4,-1)--(4.65,-0.35);
	\draw[-latex,  ,rotate=120] (3,-0)--(3.55,-0.55);

	\draw[  ,rotate=120](4,1)--(4,0);
	\draw[  ,rotate=120](4,-1)--(4,0);
	\draw[-latex,  ,rotate=120](4,0)--(4,0.5);
	\draw[-latex,  ,rotate=120](4,0)--(4,-0.5);
	
	\draw[  ,rotate=120](-4,1)--(-4,0);
	\draw[  ,rotate=120](-4,-1)--(-4,0);
	\draw[-latex,  ,rotate=120](-4,0)--(-4,0.5);
	\draw[-latex,  ,rotate=120](-4,0)--(-4,-0.5);

	\draw[  ,rotate=120](-3,0)--(-2,0);
	\draw[-latex,  ,rotate=120](-3,0)--(-2.3,0);
	\draw[  ,rotate=120](-6,0)--(-5,0);
	\draw[-latex,  ,rotate=120](-6,0)--(-5.25,0);
	\draw[  ,rotate=120](3,0)--(2,0);
	\draw[-latex,  ,rotate=120](2,0)--(2.5,0);
	\draw[  ,rotate=120](6,0)--(5,0);
	\draw[-latex,  ,rotate=120](5,0)--(5.4,0);

				\filldraw[fill=white, draw=PineGreen, thick] (2.5,0) circle (0.6cm);		\filldraw[fill=white, draw=PineGreen, thick] (-2.5,0) circle (0.6cm);		\draw[-latex,PineGreen ] (2.4,0.6)--(2.6,0.6);		
	\draw[-latex,PineGreen ] (-2.6,0.6)--(-2.4,0.6);
	\filldraw[fill=white, draw=PineGreen, thick] (5.5,0) circle (0.6cm);		\filldraw[fill=white, draw=PineGreen, thick] (-5.5,0) circle (0.6cm);		\draw[-latex,PineGreen ] (5.4,0.6)--(5.6,0.6);		
	\draw[-latex,PineGreen ] (-5.6,0.6)--(-5.4,0.6);

					\filldraw[fill=white, draw=PineGreen, thick,rotate=60] (2.5,0) circle (0.6cm);	
						\filldraw[fill=white, draw=PineGreen, thick,rotate=60] (-2.5,0) circle (0.6cm);	
							\draw[-latex,PineGreen ,rotate=60] (2.4,0.6)--(2.6,0.6);		
	\draw[-latex,PineGreen ,rotate=60] (-2.6,0.6)--(-2.4,0.6);
	\filldraw[fill=white, draw=PineGreen, thick,rotate=60] (5.5,0) circle (0.6cm);		\filldraw[fill=white, draw=PineGreen, thick,rotate=60] (-5.5,0) circle (0.6cm);		\draw[-latex,PineGreen ,rotate=60] (5.4,0.6)--(5.6,0.6);		
	\draw[-latex,PineGreen ,rotate=60] (-5.6,0.6)--(-5.4,0.6);
	
				\filldraw[fill=white, draw=PineGreen, thick,rotate=120] (2.5,0) circle (0.6cm);	
					\filldraw[fill=white, draw=PineGreen, thick,rotate=120] (-2.5,0) circle (0.6cm);	
						\draw[-latex,PineGreen ,rotate=120] (2.4,0.6)--(2.6,0.6);		
	\draw[-latex,PineGreen ,rotate=120] (-2.6,0.6)--(-2.4,0.6);
	\filldraw[fill=white, draw=PineGreen, thick,rotate=120] (5.5,0) circle (0.6cm);		\filldraw[fill=white, draw=PineGreen, thick,rotate=120] (-5.5,0) circle (0.6cm);		\draw[-latex,PineGreen ,rotate=120] (5.4,0.6)--(5.6,0.6);		
	\draw[-latex,PineGreen ,rotate=120] (-5.6,0.6)--(-5.4,0.6);

	\end{tikzpicture}
	\caption{\footnotesize  The jump contour $ \Sigma^{E}$ of the RH problem \ref{p1E} for the error function  $E(k)$.}
	\label{p2fige}
\end{figure}

\begin{RHP}\label{p1E}
Find a  matrix-valued function $E(k):=E(k;y,t)$ such that
\begin{itemize}
\item $E(k)$ is analytic in $\mathbb{C}\setminus \Sigma^{E}$.
\item $E_+(k)=E_-(k)V^{E}(k)$ with
\begin{align}\label{pVE}
V^{E}(k)=\left\{
\begin{aligned}
   &V^{(3)}(k), \quad k\in\Sigma^{(3)}\setminus \mathrm{U},\\
   &M^{loc}(k), \quad k\in\partial\mathrm{U}.
\end{aligned}
\right.
\end{align}
\item $E(k)=I+\mathcal{O}(k^{-1}), \quad k\rightarrow \infty $.
\end{itemize}
\end{RHP}

It is readily seen that for $1\le p\le \infty$,
\begin{equation}
  \| V^{E}(k) -I \|_{L^p(\Sigma^{E})} = \begin{cases}
  \mathcal{O}(e^{-ct^{3\delta_1}}), \quad k \in \Sigma^{E} \setminus \mathrm{U},\\
  \mathcal{O}(t^{-\kappa_p}), \quad k \in \partial  \mathrm{U},
  \end{cases}
\end{equation}
where $c$ is a positive constant, and $\kappa_p = \frac{p-1}{p} \delta_1 +\frac{1}{3p}$.

It follows from the small-norm RH problems arguments  \cite{RN10} that there exists a unique solution to RH problem \ref{p1E} for large
$t$. In fact, according to Beals-Coifman's theorem \cite{BC1984}, the solution of the RH problem \ref{p1E} can be expressed as
\begin{equation}\label{p1e2}
E(k)=I+\frac{1}{2\pi \mathrm{i}}\int_{\Sigma^{E}}\dfrac{\left( I+\varpi(\varsigma)\right) (V^{E}(\varsigma)-I)}{\varsigma-k}d\varsigma,
\end{equation}
where $\varpi \in L^2(\Sigma^{E})$ is the unique solution of the following equation
\begin{equation} \label{p1en}
(1-C_{E}) \varpi = C_{E}I,
\end{equation}
with $C_{E}$ being the Cauchy projection operator on $\Sigma^{E}$. Further, we have the following  estimate
\begin{equation}
\| C_{E} \|_{L^2(\Sigma^{E})}  \le \|C_-\|_{L^2(\Sigma^{E})} \|V^{E} -I \|_{L^\infty(\Sigma^{E})} \lesssim t^{-\delta_1}. \nonumber
\end{equation}
Therefore,  the equation \eqref{p1en}  admits  a unique solution
\begin{equation}\label{p1var}
\varpi = (1-C_{E})^{-1} (C_{E} I),
\end{equation}
which  can be rewritten as
\begin{equation}
\varpi =  \sum_{j=1}^4 C_{E}^j I +(1-C_E)^{-1} (C_E^5 I),\nonumber
\end{equation}
where for $j=1,\cdots,4$, we have
\begin{align}
\| C_{E}^j I\|_{L^2(\Sigma^{E})} \lesssim t^{-\frac{1}{6}-j\delta_1+\frac{1}{2}\delta_1}, \quad
\| \varpi -  \sum_{j=1}^4 C_{E}^j I\|_{L^2(\Sigma^{E})} \lesssim t^{-\frac{1}{6}- \frac{9}{2}\delta_1 }.
\end{align}

For later use, we   evaluate the value  of  $E(k)$ at $k = e^{\frac{\pi}{6} \mathrm{i}}$.

\begin{proposition}\label{p1este1}
As $t\to\infty$,
\begin{equation}\label{p1e2e}
E(e^{\frac{\pi}{6}\mathrm{i}}) =  I + t^{-1/3} E_1(e^{\frac{\pi}{6}\mathrm{i}})
+ \mathcal{O}(t^{-2/3+4\delta_1}),
\end{equation}
in which    $E_1(e^{\frac{\pi}{6}\mathrm{i}})$ is the value of the following function at $k=e^{\frac{\pi}{6}\mathrm{i}}$,
\begin{align}
E_1(k)=    \sum\limits_{j=a,b,c,d}  \left( \frac{M_{j }^{(1)}(s)}{c_j(k-   k_j)} +     \frac{\omega \Gamma_3  \overline{M_{j }^{(1)}(s)}  \Gamma_3 }{c_j(k- \omega  k_j)} +
 \frac{   \omega ^2 \Gamma_2  \overline{M_{j }^{(1)}(s)}  \Gamma_2}{c_j(k- \omega^2 k_j)}  \right) \label{p1p1}
 \end{align}
with $c_j$ and $M_{j}^{(1)}(s)$, $j=a,b,c,d  $ being given by  \eqref{p1ca}, \eqref{p1cb} and   \eqref{p1mj01}   respectively.

\end{proposition}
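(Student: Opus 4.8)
The plan is to evaluate the Beals--Coifman representation \eqref{p1e2} of $E(k)$ at the fixed point $k=e^{\frac{\pi}{6}\mathrm{i}}$, which sits at a positive distance from the whole jump contour $\Sigma^E$: indeed $e^{\frac{\pi}{6}\mathrm{i}}\notin\mathbb{D}_n$ by construction, and the disks $\mathrm{U}_{jl}$ shrink onto the critical rays as $t\to\infty$ by \eqref{cc1}, so for large $t$ the point $e^{\frac{\pi}{6}\mathrm{i}}$ lies outside $\mathrm{U}$ and off $\Sigma^{(3)}$. First I would write $\Sigma^E=(\Sigma^{(3)}\setminus\mathrm{U})\cup\partial\mathrm{U}$ in \eqref{p1e2} and discard the part over $\Sigma^{(3)}\setminus\mathrm{U}$: there $V^E-I$ is exponentially small, and since $(\varsigma-e^{\frac{\pi}{6}\mathrm{i}})^{-1}$ is bounded on this set, both the $I$-term and, by Cauchy--Schwarz with the bound on $\varpi$ from \eqref{p1var}, the $\varpi$-term are $\mathcal{O}(e^{-ct^{3\delta_1}})$.

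The heart of the proof is the residue evaluation of the remaining integral over $\partial\mathrm{U}$. On $\partial\mathrm{U}$ one has $V^E-I=M^{loc}-I$, and inserting the expansion \eqref{p1mlo}, the leading contribution is $\frac{t^{-1/3}}{2\pi\mathrm{i}}\int_{\partial\mathrm{U}}(\varsigma-e^{\frac{\pi}{6}\mathrm{i}})^{-1}M^{loc}_1(\varsigma,s)\,d\varsigma$. The key observation is that the matrix $M^{loc}_1(\cdot,s)$ coincides exactly with the rational function $E_1$ of \eqref{p1p1}: it is a finite sum of simple-pole terms $M_{j}^{(1)}(s)/[c_j(k-\omega^l k_j)]$ together with their $\Gamma_2,\Gamma_3$-conjugates, all of whose poles lie at the critical points $\omega^l k_j$ inside $\mathrm{U}$, and it decays like $k^{-1}$ as $k\to\infty$. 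Since $e^{\frac{\pi}{6}\mathrm{i}}$ is exterior to every disk and the residue at infinity vanishes, the residue theorem --- with $\partial\mathrm{U}$ carrying the clockwise orientation fixed in Figure \ref{p2fige} --- turns this integral into the value of $M^{loc}_1$ at the exterior point, producing precisely $t^{-1/3}E_1(e^{\frac{\pi}{6}\mathrm{i}})$.

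Then I would account for the two error contributions over $\partial\mathrm{U}$. The first is the uncertainty $\mathcal{O}(t^{-1/3+4\delta_1})$ in each coefficient $M_j^{(1)}(s)$ coming from the matching in Proposition \ref{p1mathch}; carried through the residue sum and evaluated at the fixed far point $e^{\frac{\pi}{6}\mathrm{i}}$ (where every factor $(e^{\frac{\pi}{6}\mathrm{i}}-\omega^l k_j)^{-1}$ is $\mathcal{O}(1)$), this multiplies the prefactor $t^{-1/3}$ to give the announced error $\mathcal{O}(t^{-2/3+4\delta_1})$. The second is the resolvent correction $\frac{1}{2\pi\mathrm{i}}\int_{\partial\mathrm{U}}(\varsigma-e^{\frac{\pi}{6}\mathrm{i}})^{-1}\varpi(\varsigma)(V^E(\varsigma)-I)\,d\varsigma$, which I would bound by Cauchy--Schwarz via $\|\varpi\|_{L^2(\partial\mathrm{U})}\|V^E-I\|_{L^2(\partial\mathrm{U})}$, using $\|V^E-I\|_{L^2(\partial\mathrm{U})}=\mathcal{O}(t^{-\kappa_2})$ with $\kappa_2=\tfrac12\delta_1+\tfrac16$ and the estimate $\|\varpi\|_{L^2}\lesssim\|C_E I\|_{L^2}$ following \eqref{p1en}--\eqref{p1var}.

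The main obstacle is exactly this resolvent estimate and the balancing of exponents. The naive Cauchy--Schwarz bound only yields $\mathcal{O}(t^{-2\kappa_2})=\mathcal{O}(t^{-1/3-\delta_1})$ for the $\varpi$-term, so to absorb it into $\mathcal{O}(t^{-2/3+4\delta_1})$ one must exploit the fact that, to leading order, both $\varpi$ and $V^E-I$ inherit the pole-only structure of $t^{-1/3}M^{loc}_1$, so that their product integrated against the smooth, far-away kernel $(\varsigma-e^{\frac{\pi}{6}\mathrm{i}})^{-1}$ can be reprocessed by residues to gain an extra power of $t^{-1/3}$; otherwise one is restricted to the range of $\delta_1$ in which $t^{-1/3-\delta_1}\le t^{-2/3+4\delta_1}$. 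Verifying the orientation and sign in the residue step, and checking that the $M^{loc}_1=E_1$ identification together with the conjugation symmetries reproduces \eqref{p1p1} verbatim, are the remaining routine but necessary checks.
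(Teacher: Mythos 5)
Your proposal is correct in outline and follows essentially the same route as the paper's own (very terse) proof: evaluate the Beals--Coifman representation \eqref{p1e2} at $k=e^{\frac{\pi}{6}\mathrm{i}}$, drop the exponentially small contribution of $\Sigma^{(3)}\setminus\mathrm{U}$, and compute the remaining $\partial\mathrm{U}$ integral by residues after inserting \eqref{p1mlo}, using that $M^{loc}_1(\cdot,s)$ coincides with the rational function $E_1$ of \eqref{p1p1} (poles inside $\mathrm{U}$, $\mathcal{O}(k^{-1})$ decay, clockwise orientation of $\partial\mathrm{U}$).

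Where you and the paper genuinely differ is the treatment of the resolvent term. The paper simply asserts that everything beyond $\frac{1}{2\pi\mathrm{i}}\oint_{\partial\mathrm{U}}\frac{M^{loc}(\varsigma)-I}{\varsigma-e^{\frac{\pi}{6}\mathrm{i}}}\,d\varsigma$ is $\mathcal{O}(t^{-1/3-5\delta_1})$; this exponent is the size of the Neumann tail $(1-C_E)^{-1}(C_E^5I)$ crossed with $V^E-I$, and it is precisely what the standing hypothesis $\delta_1>1/27$ is tuned to, since $t^{-1/3-5\delta_1}\leq t^{-2/3+4\delta_1}$ exactly when $\delta_1\geq 1/27$. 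Your black-box Cauchy--Schwarz bound $t^{-2\kappa_2}=t^{-1/3-\delta_1}$ is, as you yourself note, sufficient only for $\delta_1\geq 1/15$; to cover the remaining range $\delta_1\in(1/27,1/15)$ one needs the structural input you sketch: the leading part of $C_EI=C_-[V^E-I]$ is the interior (minus-side) boundary value of the Cauchy transform of $t^{-1/3}M^{loc}_1\mathbf{1}_{\partial\mathrm{U}}$, and this vanishes identically because $M^{loc}_1$ is analytic outside the disks and decays at infinity, so each Neumann iterate $C_E^jI$ is genuinely smaller than Cauchy--Schwarz alone indicates. So the ``obstacle'' you identify is real --- the paper silently assumes its resolution inside the unexplained exponent $-1/3-5\delta_1$ --- and your residue/analyticity mechanism is the correct way to close it; writing that step out would in fact supply a justification that the published proof omits.
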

\begin{proof}
 From \eqref{p1e2} and Proposition \ref{promloc}, we have
 \begin{align*}
E(e^{\frac{\pi}{6}\mathrm{i}})
&= I + \frac{1}{2 \pi \mathrm{i}} \oint_{\partial \mathrm{U}} \frac{M^{loc}(\varsigma)-I}{\varsigma - e^{\frac{\pi}{6}\mathrm{i}}} d\varsigma + \mathcal{O}(t^{-1/3-5\delta_1}),
 \end{align*}
which  together with (\ref{p1mlo})  gives \eqref{p1e2e}.
\end{proof}

\subsection{Asymptotic analysis on a pure  $\bar\partial$-problem} \label{p1pdbar}

Based on the results  of the pure RH problem \ref{p1purerhp},  we   consider  a  transformation
\begin{equation}\label{p1m2m3}
	m^{(4)}(k) = m^{(3)}(k) M^{rhp}(k)^{-1},
\end{equation}
which satisfies the following pure $\bar{\partial}$-problem.
\begin{Dbar}\label{p1dbarproblem}
 Find a row vector-valued function  $m^{(4)}(k):=m^{(4)}(k;y,t)$ such that
\begin{itemize}
\item $m^{(4)}(k)$ is continuous  in $\mathbb{C}$.
\item As $k \to \infty$ in $\mathbb{C}$, $m^{(4)}(k)=\begin{pmatrix} 1& 1& 1 \end{pmatrix}+\mathcal{O}(k^{-1})$.
\item $m^{(4)}(k)$ satisfies the $\bar\partial$-equation
\begin{equation}\label{p1Dbar1}
\bar{\partial}m^{(4)}(k)=m^{(4)}(k)W^{(4)}(k),\ \ k\in \mathbb{C}
\end{equation}
with
\begin{equation}\label{p1w4}
W^{(4)}(k)=M^{rhp}(k)\bar{\partial}\mathcal{R}^{(2)}(k)M^{rhp}(k)^{-1},
\end{equation}
where $\bar{\partial}\mathcal{R}^{(2)}(k)$ is defined in \eqref{dbarr2}.
\end{itemize}
\end{Dbar}

The $\bar \partial$-problem \ref{p1dbarproblem} is equivalent to the following integral equation
\begin{equation}
	m^{(4)}(k)  = \begin{pmatrix} 1&1&1 \end{pmatrix} +\frac{1}{\pi} \iint_{\mathbb{C}} \frac{m^{(4)}(\varsigma) W^{(4)}(\varsigma)}{\varsigma-k} dA(\varsigma),
\end{equation}
which  can be rewritten as
\begin{equation}
(I-S)m^{(4)}(k)=\begin{pmatrix} 1& 1& 1 \end{pmatrix},
\end{equation}
where $S$ is left Cauchy-Green integral  operator defined by
\begin{equation}\label{p1m3os2}
S[f](k)=\frac{1}{\pi}\underset{\mathbb{C}}\iint\dfrac{f(\varsigma)W^{(4)} (\varsigma)}{\varsigma-k}dA(\varsigma).
\end{equation}

In order to    prove  the existence of  the operator $(I-S)^{-1}$,    we need the following  estimates on   $\im \theta_{12}(k)$.
\begin{lemma}\label{p2im2}
	In $\mathcal{T}_{1}$, denote $k=|k|e^{\mathrm{i}\phi_0}$.
	Then the following estimates hold:
	\begin{itemize}
		\item Near the phase points $k_j,\ j=1,8$:
		\begin{align}
			& \im \theta_{12}(k)\leq \begin{cases}
				-c_j|\re k -k_j|^2 |\im k|,\quad k\in \Omega_{j} \cap \{ |k|\leq 2 \}, \label{omega1}\\
				-c_j|\im k|, \quad k \in \Omega_{j} \cap \{ |k|>2 \},
			\end{cases}\\
			& \im \theta_{12}(k)\geq  \begin{cases}
				c_j|\re  k -k_j|^2 |\im k|,\quad k\in  {\Omega}_{j}^* \cap \{ |k|\leq 2 \}, \\
				c_j|\im k|, \quad k\in {\Omega}_{j}^* \cap \{ |k|>2 \},
			\end{cases}
		\end{align}
		where $c_j=c_j(k_j, \phi_0,\xi)$ is a constant.
		\item Near the phase points $k_j, \ j=2,\cdots,7$:
		\begin{align}
			&	\im \theta_{12}(k)\leq -c_j|\re k -k_j|^2|\im k| ,\quad k\in \Omega_{j},	\\
			& 	\im \theta_{12}(k)\geq c_j|\re k-k_j|^2|\im k| ,\quad k\in  {\Omega}_{j}^*,
		\end{align}
		where $c_j=c_j(k_j, \phi_0,\xi)$ is a constant.
	\end{itemize}

\end{lemma}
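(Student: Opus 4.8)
The plan is to reduce everything to the explicit rational form of the phase and to exploit the cubic degeneracy that produces the Painlev\'e regime. Writing $p=k-k^{-1}$, the expression \eqref{oritheta12} becomes $\theta_{12}(k)=\hat{\xi}\,p-3p/(p^{2}+1)$ since $k^{2}-1+k^{-2}=p^{2}+1$. With $k=u+\mathrm{i}v$ one has $\im(k-k^{-1})=v(1+|k|^{-2})$, so the linear part contributes $\hat{\xi}\,v(1+|k|^{-2})$ while the rational part $-3p/(p^{2}+1)$ is a bounded analytic correction away from the unit circle. Because $\hat{\xi}<0$ (and close to $-\tfrac38$) throughout $\mathcal{T}_1$, this already pins the sign of $\im\theta_{12}$ for $k$ far from $|k|=1$ and yields the linear bound $\im\theta_{12}(k)\le-c_j|\im k|$ on $\Omega_j\cap\{|k|>2\}$ for $j=1,8$; the mirror estimate on $\Omega_j^{*}$ follows from the Schwarz symmetry $\theta_{12}(\bar k)=\overline{\theta_{12}(k)}$, which flips the sign of the imaginary part.

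For the bounded region I would localize near the coalescing critical points $k_a,k_b,k_c,k_d$ of \eqref{hjfd1}--\eqref{hjfd2}. Inside the disks $\mathrm{U}_{j}$ the cubic expansion \eqref{ttheta12}, $\theta_{12}(k)=\theta_{12}(k_a)-\tfrac{8}{3}c_a^{3}(k-k_a)^{3}-2sc_a t^{-2/3}(k-k_a)+\mathcal{O}\bigl((k-k_a)^{4}\bigr)$, together with the identity $\im(k-k_a)^{3}=\im k\,[\,3(\re k-k_a)^{2}-(\im k)^{2}\,]$, does the job: since the lenses are opened with a fixed small angle $0<\varphi<\pi/4$, on $\Omega_j$ one has $|\im k|\le|\re k-k_a|\tan\varphi$, so the bracket is $\ge(3-\tan^{2}\varphi)(\re k-k_a)^{2}>0$, the leading term carries the correct negative sign, and the bounded $s$ makes the linear term only help. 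Because each opening interval lies on the far side of its saddle from the coalescence point, $|\re k-k_a|\ge|\re k-k_j|$ there, so the bound may be rewritten with reference point $k_j$, giving $\im\theta_{12}\le-c_j|\re k-k_j|^{2}|\im k|$. On the remaining part of $\Omega_j\cap\{|k|\le2\}$ outside the disks, $k$ stays in a fixed compact set away from every saddle and every singularity, where $\im\theta_{12}/\im k$ is continuous, strictly negative and bounded away from $0$ by the signature table of Figure \ref{figureb}; there $(\re k-k_j)^{2}$ is bounded above, so $\im\theta_{12}\le-c'|\im k|\le-c_j|\re k-k_j|^{2}|\im k|$ as well.

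The hard part will be gluing these local estimates into a single uniform bound over the full sector, since the disks $\mathrm{U}_{j}$ have radius $c_0\to0$ while the sectors $\Omega_j$ are of fixed size (indeed unbounded for $j=1,8$). Two issues demand care: controlling the quartic remainder in \eqref{ttheta12} so that it never overtakes the cubic main term, which is exactly where the radius $c_0\lesssim t^{\delta_1-1/3}$ and the constraint $\tfrac{1}{27}<\delta_1<\tfrac{1}{12}$ from \eqref{p1c0} are used; and certifying the sign of $\im\theta_{12}$ globally rather than only asymptotically. The cleanest way to settle the latter is to lean on the exact expression: $\im\theta_{12}$ is harmonic off the singular set, hence attains no interior extremum in $\Omega_j$, and its boundary values on the two rays and on $I_j$ are prescribed by Figure \ref{figureb}; this excludes any spurious sign change in the interior and upgrades the local estimates to the uniform bounds claimed, the lower bounds on the conjugate sectors $\Omega_j^{*}$ following throughout from the Schwarz symmetry.
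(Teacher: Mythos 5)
The paper never actually proves this lemma---its ``proof'' is the single line that the argument ``is similar with Lemma 3.6 in \cite{WF}''---so your proposal has to stand on its own, and it contains a genuine error at the decisive step: the estimate inside the disks around $k_a,k_b,k_c,k_d$. In $\mathcal{T}_1^R$, where the paper carries out the analysis, $\hat\xi\ge-\tfrac{3}{8}$, and the prefactor in the definition of $s$ accompanying \eqref{scal1} is negative (because $-7+\sqrt{21}<0$ while $98-21\sqrt{21}>0$); hence $s\le0$ throughout this half-zone. Your assertion that ``the bounded $s$ makes the linear term only help'' therefore has the sign reversed: on the upper sectors the term $-2sc_at^{-2/3}\im k$ is nonnegative and works \emph{against} the cubic term. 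This is fatal rather than cosmetic, because for $s<0$ the two real saddles satisfy $k_2<k_a<k_1$, and the linear and cubic contributions cancel exactly at the saddles. Writing $\hat k=\hat u+\mathrm{i}\hat v$ for the scaled variable of \eqref{ttheta12},
\begin{equation*}
\im\Bigl(-\tfrac{8}{3}\hat k^3-2s\hat k\Bigr)=-\hat v\Bigl[\,8\bigl(\hat u^2-\hat u_1^2\bigr)-\tfrac{8}{3}\hat v^2\Bigr],
\qquad \hat u_1=\tfrac{1}{2}\sqrt{-s},
\end{equation*}
where $\pm\hat u_1$ are the saddle points of the model phase. On $\Omega_1$ this vanishes like $\hat v(\hat u-\hat u_1)$ as $\hat u\downarrow\hat u_1$, so the bound you claim there, $\im\theta_{12}\le-c\,|\re k-k_a|^2\,|\im k|$, is simply false near $k_1$: dividing by $|\im k|$, your bound would force $\im\theta_{12}/|\im k|\le-c\,(k_1-k_a)^2<0$ uniformly on $\Omega_1$, whereas in fact $\im\theta_{12}/|\im k|\to0$ as $\re k\downarrow k_1$. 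This is precisely why the lemma centers the quadratic factor at the moving saddle points $k_j$---where $\theta_{12}'$ vanishes and where the lens boundaries are attached---and not at the coalescence points; your final replacement $|\re k-k_a|\ge|\re k-k_j|$ merely transfers an inequality that never held. The correct local computation is the display above with the bracket bounded below by $(8-\tfrac{8}{3}\tan^2\varphi)(\hat u-\hat u_1)^2$ on the sector, i.e.\ an expansion about $k_j$, not about $k_a$.

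Two further steps do not withstand scrutiny. For the linear bound on $\Omega_j\cap\{|k|>2\}$, $j=1,8$, you argue that $-3p/(p^2+1)$ is a ``bounded analytic correction,'' so the sign of $\hat\xi\,\im k\,(1+|k|^{-2})$ prevails far from the unit circle; but a bounded additive correction cannot be dominated by a term that vanishes as $\im k\to0$. What makes such statements true is the exact factorization
\begin{equation*}
\im\theta_{12}(k)=\im p\,\Bigl[\hat\xi-\frac{3(1-|p|^2)}{|p^2+1|^2}\Bigr],\qquad p=k-k^{-1},\quad \im p=\bigl(1+|k|^{-2}\bigr)\im k,
\end{equation*}
which shows both contributions are proportional to $\im k$, after which one must bound the bracket away from zero---and the bracket vanishes at the saddles. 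Indeed, just above the segment $(k_2,k_1)$, which lies at distance greater than $1$ from the unit circle, one has $\im\theta_{12}>0$, so ``far from $|k|=1$ with $\hat\xi<0$'' can never pin the sign by itself. Worse, here $k_1\to k_a=\tfrac{\sqrt{7}+\sqrt{3}}{2}>2$, so $\Omega_1$ lies entirely in $\{|k|>2\}$ and the estimate must degrade to the quadratic, saddle-centered form as $\re k\downarrow k_1$ (a point that the statement, transplanted from the NLS geometry of \cite{WF} where the saddles sit inside $|k|\le2$, glosses over); your far-field argument never confronts this. Finally, the harmonicity/maximum-principle gluing can only certify the \emph{sign} of $\im\theta_{12}$ on each sector; it cannot upgrade a sign to the quantitative bounds $c_j|\re k-k_j|^2|\im k|$ that the lemma asserts and that the integral estimates in the proof of Lemma \ref{lp2ests} actually consume. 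Moreover, the ``fixed compact set away from every saddle'' you invoke outside the disks does not exist, since the disks have radius $c_0\lesssim t^{\delta_1-1/3}\to0$ while the saddles $k_j$ themselves move with $t$.
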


\begin{proof}
	The proof  is similar with  Lemma 3.6 in \cite{WF}. Here we omit it.
	\end{proof}

Further with  above   Lemma \ref{p2im2},   we obtain the following estimate.
\begin{lemma}\label{lp2ests}
	The norm of the operator $S$ satisfies
	\begin{equation}\label{p2ests}
		\| S\|_{L^\infty \to L^\infty} \lesssim t^{-1/3}, \quad t \to \infty,
	\end{equation}
	which implies that $(I-S)^{-1} $ exists for large   $t$.
\end{lemma}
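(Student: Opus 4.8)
The plan is to bound the operator norm directly from the integral representation \eqref{p1m3os2}. For any $f\in L^\infty$ one has
\[
\|S[f]\|_{L^\infty}\le \|f\|_{L^\infty}\,\sup_{k\in\mathbb{C}}\frac{1}{\pi}\iint_{\mathbb{C}}\frac{|W^{(4)}(\varsigma)|}{|\varsigma-k|}\,dA(\varsigma),
\]
so the whole statement reduces to a uniform-in-$k$ estimate of this scalar double integral. First I would record that $M^{rhp}(k)$ and its inverse are uniformly bounded on $\mathbb{C}$: outside $\mathrm{U}$ this follows from $M^{rhp}=E$ together with the small-norm control of $E$ in RH problem \ref{p1E}, and inside $\mathrm{U}$ from $M^{rhp}=E\,M^{loc}$ and Proposition \ref{promloc}. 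Hence by \eqref{p1w4} we obtain the pointwise bound $|W^{(4)}(\varsigma)|\lesssim|\bar\partial\mathcal{R}^{(2)}(\varsigma)|$, and everything is reduced to integrating $|\bar\partial\mathcal{R}^{(2)}|/|\varsigma-k|$.

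Next, using the symmetry $R(k)=\overline{R(\bar k^{-1})}$ and the rotational structure of \eqref{dbarr2}, I would reduce to the contribution of $\Omega=\cup_{j=0}^{8}\Omega_j$; the regions $\omega^l\Omega$, $l=1,2$, carry the phases $\theta_{13},\theta_{23}$, which are rotations of $\theta_{12}$ obeying the same bounds and thus contribute identically. On each $\Omega_j$ the matrix $\bar\partial\mathcal{R}^{(2)}$ has a single nonzero entry bounded by $|\bar\partial R_j(\varsigma)|\,|e^{-\mathrm{i}t\theta_{12}(\varsigma)}|$, and since $|e^{-\mathrm{i}t\theta_{12}}|=e^{t\,\im\theta_{12}}$, Lemma \ref{p2im2} supplies the transverse Gaussian-type decay $e^{-c_j t|\re\varsigma-k_j|^2|\im\varsigma|}$. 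Writing $\varsigma=k_j+u+\mathrm{i}v$ and inserting the bounds \eqref{p1est0}--\eqref{p1est2}, I would split each $\iint_{\Omega_j}$ into the part carrying the bounded factor and the part carrying the singular factor $|u|^{-1/2}$. The Cauchy kernel $|\varsigma-k|^{-1}$ is treated by Cauchy--Schwarz in $u$ for the $L^2$ piece (using $\||\varsigma-k|^{-1}\|_{L^2(du)}\lesssim|v-\im k|^{-1/2}$) and by H\"older with an $L^q$, $q<2$, norm for the singular piece; the remaining one-dimensional $v$-integral then produces the negative power of $t$ from the Gaussian decay.

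The hard part, and the main obstacle, is that in $\mathcal{T}_1^R$ the phase points coalesce, $k_1,k_2\to k_a$, $k_3,k_4\to k_b$, and so on, with separations only $\mathcal{O}(t^{-1/3})$ by \eqref{hjfd1}--\eqref{hjfd2} and \eqref{cc1}. Consequently the quadratic constants $c_j$ in Lemma \ref{p2im2} degenerate, and inside $\mathrm{U}$ the decay is governed by the cubic Painlev\'e structure at scale $t^{-1/3}$ rather than by a non-degenerate saddle. I would therefore partition each $\Omega_j$ into its part outside $\mathrm{U}$, where the phase is non-degenerate and the contribution is exponentially small (as already exploited in RH problem \ref{p1E}), and its part inside $\mathrm{U}$, where I rescale by $\hat k=c_a t^{1/3}(k-k_a)$ (and its analogues near $k_b,k_c,k_d$) so that the cubic terms in \eqref{ttheta12} give genuine decay in $\hat k$. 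Tracking the Jacobian $dA=\mathcal{O}(t^{-2/3})\,d\hat A$, the kernel factor $|\varsigma-k|^{-1}=\mathcal{O}(t^{1/3})|\hat\varsigma-\hat k|^{-1}$, and the $\bar\partial R_j$ bounds \eqref{p1est0}--\eqref{p1est2} against the integrable singularities, the balance of these $t$-powers against the rescaling length $t^{-1/3}$ yields the uniform estimate $\|S\|_{L^\infty\to L^\infty}\lesssim t^{-1/3}$. The delicate bookkeeping lies precisely in combining the three bounds region by region, keeping the estimate uniform in $k$ (in particular at $k=e^{\frac{\pi}{6}\mathrm{i}}$) and ensuring no term exceeds $t^{-1/3}$. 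Once this is in place, $\|S\|<1$ for large $t$, so $(I-S)^{-1}=\sum_{n\ge0}S^n$ converges in $L^\infty$, which proves the claim.
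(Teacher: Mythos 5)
Your first two paragraphs coincide with the route the paper actually takes: pass to the scalar bound $\sup_k\frac1\pi\iint|W^{(4)}(\varsigma)||\varsigma-k|^{-1}dA(\varsigma)$, use the boundedness of $M^{rhp}$ and \eqref{p1w4} to replace $W^{(4)}$ by $\bar\partial\mathcal{R}^{(2)}$, reduce by symmetry to the sectors $\Omega_j$, and combine \eqref{p1est0}--\eqref{p1est2} with Lemma \ref{p2im2} and H\"older against the Cauchy kernel. Had you simply finished that computation you would be done: in the paper, on $\Omega_1\cap\{|k|\le 2\}$ Cauchy--Schwarz in $u$ gives the factor $(tv)^{-1/4}|v-y|^{-1/2}$, the sector constraint $u\ge v$ converts the surviving exponential into $e^{-ctv^3}$, and the one-dimensional integral $t^{-1/4}\int_0^{\cdot}v^{-1/4}|v-y|^{-1/2}e^{-ctv^3}\,dv\lesssim t^{-1/4}\cdot t^{-1/12}=t^{-1/3}$, while the $|k|>2$ pieces give $t^{-1/2}$. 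No partition of $\Omega_j$ relative to $\mathrm{U}$ and no rescaling appear anywhere.

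The gap is your third paragraph, which disavows this route and makes the conclusion rest on an unfinished detour built on two incorrect claims. First, the constants in Lemma \ref{p2im2} do not degenerate in $\mathcal{T}_1$: the bound there is of the cubic (merged-saddle) form $|\re k-k_j|^2|\im k|$, not the nondegenerate form $|\re k-k_j|\,|\im k|$, and this is precisely the estimate that remains uniform as $k_1,k_2\to k_a$, because the small quadratic coefficient $\theta_{12}''(k_j)=\mathcal{O}(t^{-1/3})$ is dominated by an order-one cubic term of favorable sign; supplying this uniform bound is the entire purpose of Lemma \ref{p2im2}, which you yourself invoke one paragraph earlier. Second, the portion of $\Omega_j$ outside $\mathrm{U}$ is \emph{not} exponentially small: $\Omega_j$ touches the real axis, where $|e^{-\mathrm{i}t\theta_{12}}|=1$, so its two-dimensional $\bar\partial$-contribution is only polynomially small (one checks it is $\mathcal{O}(t^{-1/3-\delta_1/2})$ by the same H\"older argument); the exponential smallness exploited in RH problem \ref{p1E} concerns one-dimensional jump contours, not the $\bar\partial$-regions. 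Third, the decisive step inside $\mathrm{U}$ is never carried out, and your own power count does not close: with the singular bound $|\bar\partial R_j|\lesssim|\re\varsigma-k_j|^{-1/2}$ the factors you list give $t^{1/6}\cdot t^{1/3}\cdot t^{-2/3}=t^{-1/6}$ multiplying a rescaled integral that is $\mathcal{O}(1)$ (not $\mathcal{O}(t^{-1/6})$) over a domain of diameter $\sim t^{\delta_1}$, so as sketched you obtain only $t^{-1/6}$; reaching $t^{-1/3}$ requires choosing among \eqref{p1est0}--\eqref{p1est2} region by region, which is exactly the bookkeeping you omit. In short, the proposal as written does not establish \eqref{p2ests}, whereas the direct H\"older computation you abandoned is both sufficient and is the paper's proof.
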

\begin{proof}
	We only estimate the operator $S$ defined by (\ref{p1m3os2}) on $\Omega_{1}$.
	Setting
	$$\varsigma = u+k_1 + v\mathrm{i} = |\varsigma|e^{\mathrm{i}w}, \quad k = x+ y\mathrm{i}, \quad u,v,x,y\in \mathbb{R},$$
	further using \eqref{p1est0}-\eqref{p1est2}, \eqref{p1w4} and \eqref{p1m3os2}, it is readily seen that
	\begin{equation*}
		\| S\|_{L^\infty \to L^\infty}   \le  c (I_1+I_2+I_3),
	\end{equation*}
	where
	\begin{align*}
		&I_1  =    \iint_{\Omega_{1} \cap \{|k| \le 2\} }\frac{e^{-c_1tu^2v}}{|\varsigma-k|} dA(\varsigma), \ \  I_2 =  \iint_{\Omega_{1} \cap \{|k| > 2\}}  \frac{e^{-c_1tv}}{|\varsigma-k|}dA(\varsigma),\\
		&I_3  =    \iint_{\Omega_{1} \cap \{|k| > 2\} } \frac{|u|^{-1/2} e^{-c_1tv} }{|\varsigma-k|}dA(\varsigma).
	\end{align*}

 Next, we  estimate the integrals $I_i,\ i=1,2,3$, respectively.
  Through calculations, we have the following basic inequalities
	\begin{equation*}
		\| |\varsigma-k|^{-1} \|_{L^q_u(v,\infty)} \lesssim |v-y|^{-1+1/q}, \quad \| e^{-c_1tu^2v}\|_{L^p_u(v,\infty)} \lesssim (tv)^{-1/(2p)},\ p, q>1.
	\end{equation*}
	Further, using H\"{o}lder's inequality,  we obtain
	\begin{align*}
		I_1  &= \int_{0}^{2 \sin w} \int_{v}^{2\cos w-k_1} \frac{e^{-c_1tu^2v}}{|\varsigma-k|} \mathrm{d}u \mathrm{d}v 	\lesssim t^{-\frac{1}{4}}\int_{0}^{2 \sin w}  |v-y|^{-1/2} v^{-\frac{1}{4}} e^{-c_1 tv^3} \mathrm{d}v   \lesssim t^{-1/3},\\
		I_2  &= \int_{2 \sin w}^\infty \int_{2\cos w -k_1}^{\infty} \frac{e^{-c_1tv}}{|\varsigma-k|} \mathrm{d}u \mathrm{d}v 	\lesssim \int_{2 \sin w}^\infty |v-y|^{-1/2} e^{-c_1 tv} \mathrm{d}v   \lesssim t^{-1/2},\\
 	I_3  &\lesssim  \int_{2 \sin w}^\infty v^{1/p-1/2}|v-y|^{1/q-1}e^{-c_1 tv} \mathrm{d}v \lesssim t^{-1/2},
			\end{align*}
 where $1/p+1/q=1$.
	The estimate of the operator $S$ over other sectors can be estimated in a similar way. Finally, we obtain  \eqref{p2ests}.
\end{proof}

This lemma implies that  the pure $\bar\partial$-problem \ref{p1dbarproblem} admits a unique solution for large   $t$.
For later use, we calculate the value of  $m^{(4)}(k)$ at $k =e^{\frac{\pi \mathrm{i}}{6}}$.

\begin{proposition}
	As $t\to\infty$,  we have  the following estimate
	\begin{equation}\label{p1estm3}
		| m^{(4)}(e^{\frac{\pi \mathrm{i}}{6}}) -\begin{pmatrix} 1&1&1\end{pmatrix}| \lesssim t^{-2/3}.
	\end{equation}
\end{proposition}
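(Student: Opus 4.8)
The plan is to read the estimate straight off the integral representation of $m^{(4)}$. Lemma~\ref{lp2ests} gives $\|S\|_{L^\infty\to L^\infty}\lesssim t^{-1/3}$, so $(I-S)^{-1}$ exists and is uniformly bounded for large $t$; hence $m^{(4)}=(I-S)^{-1}\begin{pmatrix}1&1&1\end{pmatrix}$ is bounded in $L^\infty(\mathbb{C})$ uniformly in $t$. Evaluating the representation at $k=e^{\frac{\pi\mathrm{i}}{6}}$ yields
\begin{equation*}
m^{(4)}(e^{\frac{\pi\mathrm{i}}{6}})-\begin{pmatrix}1&1&1\end{pmatrix}=\frac{1}{\pi}\iint_{\mathbb{C}}\frac{m^{(4)}(\varsigma)W^{(4)}(\varsigma)}{\varsigma-e^{\frac{\pi\mathrm{i}}{6}}}\,dA(\varsigma),
\end{equation*}
and since $M^{rhp}$ and $(M^{rhp})^{-1}$ are bounded, $|W^{(4)}|\lesssim|\bar\partial\mathcal{R}^{(2)}|$ by \eqref{p1w4}. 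The whole problem therefore reduces to bounding the scalar integral $\iint_{\mathbb{C}}|\bar\partial\mathcal{R}^{(2)}(\varsigma)|\,|\varsigma-e^{\frac{\pi\mathrm{i}}{6}}|^{-1}\,dA(\varsigma)$.

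First I would use the geometry of the construction. The matrix $\bar\partial\mathcal{R}^{(2)}$ is supported on the opened sectors $\omega^l(\Omega_j\cup\Omega_j^*)$, and by the choice of opening angle $0<\varphi<\pi/4$ together with the location of the phase points, the point $e^{\frac{\pi\mathrm{i}}{6}}$ stays a fixed positive distance away from this support. Hence $|\varsigma-e^{\frac{\pi\mathrm{i}}{6}}|^{-1}\lesssim 1$ on the domain of integration, so the Cauchy kernel contributes no singularity; by the symmetry $R(k)=\overline{R(\bar k^{-1})}$ it suffices to treat the representative sectors $\Omega_j$, $j=0,\dots,8$, on which $|\bar\partial\mathcal{R}^{(2)}|=|\bar\partial R_j|\,e^{t\operatorname{Im}\theta_{12}}$.

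The main work, and the main obstacle, is the contribution of the sectors attached to the coalescing saddles. Away from the disks $\mathrm{U}_{jl}$ the decay estimates of Lemma~\ref{p2im2} make $e^{t\operatorname{Im}\theta_{12}}$ of size $\mathcal{O}(e^{-ct^{3\delta_1}})$, so that part is negligible. Inside $\mathrm{U}_{a0}$ (and the analogous disks around $k_b,k_c,k_d$ and their $\omega$-rotations) I would pass to the scaled variable $\hat k=c_a t^{1/3}(\varsigma-k_a)$ of \eqref{scal1}, under which $dA(\varsigma)=c_a^{-2}t^{-2/3}\,dA(\hat k)$ and, by \eqref{ttheta12}, $e^{t\operatorname{Im}\theta_{12}}$ equals $e^{-\frac{8}{3}\operatorname{Im}\hat k^3}$ times factors that stay uniformly bounded because $|\hat k|\lesssim t^{\delta_1}$ and $4\delta_1-\tfrac13<0$ for $\delta_1<\tfrac1{12}$. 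Bounding $|\bar\partial R_j|$ by a constant via \eqref{p1est2}, the rescaled integral $\iint e^{-\frac{8}{3}\operatorname{Im}\hat k^3}\,dA(\hat k)$ over the cubically decaying lens cone is $\mathcal{O}(1)$, leaving exactly the Jacobian factor $t^{-2/3}$. Summing the finitely many such contributions, together with the exponentially small remainder from outside the disks, yields \eqref{p1estm3}.

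The delicate point to get right is precisely this scaling estimate: one must check that the cubic phase $\operatorname{Im}\hat k^3$ dominates both the linear term $2s\operatorname{Im}\hat k$ (with $s$ bounded in $\mathcal{T}_1$) and the quartic remainder $\mathcal{O}(\hat k^4 t^{-1/3})$ uniformly over the truncated cone $|\hat k|\lesssim t^{\delta_1}$, so that the rescaled oscillatory integral stays bounded and the leading order $t^{-2/3}$ is not degraded. The constraint $\tfrac{1}{27}<\delta_1<\tfrac1{12}$ is exactly what renders these competing terms harmless, the upper bound controlling the quartic remainder and the lower bound guaranteeing the exponential smallness $e^{-ct^{3\delta_1}}$ off the disks.
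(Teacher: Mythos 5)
Your overall reduction coincides with the paper's: you use the Cauchy--Green representation of $m^{(4)}$, the bound $\|S\|_{L^\infty\to L^\infty}\lesssim t^{-1/3}$ of Lemma \ref{lp2ests} to get uniform boundedness of $m^{(4)}$, the boundedness of $M^{rhp}$ and its inverse to get $|W^{(4)}|\lesssim|\bar\partial\mathcal{R}^{(2)}|$, and the fact that $e^{\frac{\pi}{6}\mathrm{i}}$ is bounded away from the support of $\bar\partial\mathcal{R}^{(2)}$ to discard the Cauchy kernel; all of this matches the paper. The gap is in how you then estimate the area integral. You split at the shrinking disks $\mathrm{U}$ and assert that outside $\mathrm{U}$ the factor $e^{t\operatorname{Im}\theta_{12}}$ is $\mathcal{O}(e^{-ct^{3\delta_1}})$ ``by Lemma \ref{p2im2}''. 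That step fails: Lemma \ref{p2im2} gives $\operatorname{Im}\theta_{12}(k)\le -c_j|\re k-k_j|^2\,|\im k|$ on $\Omega_j$, which provides no decay whatsoever as $\im k\to 0$, and the support of $\bar\partial\mathcal{R}^{(2)}$ reaches the real axis; indeed, by \eqref{ieow}, for $\arg(k-k_1)<\varphi/3$ one has $\bar\partial R_1=\tfrac12 d'(\re k)$, which is generically nonzero. Hence on the part of $\Omega_j\setminus\mathrm{U}$ adjacent to $\mathbb{R}$ the integrand is $O(1)$ pointwise, not exponentially small. The estimate $e^{-ct^{3\delta_1}}$ is valid only on the one-dimensional contours $\Sigma^{(3)}\setminus\mathrm{U}$, where $\im k\gtrsim c_0$ along the lens boundaries; that is where the paper invokes it (for $V^{(3)}-I$ and $V^{E}-I$), and you have imported it into the two-dimensional setting where it does not hold. (Relatedly, the lower bound $\delta_1>\tfrac1{27}$ is not what makes this region harmless; it comes from the small-norm error analysis in Subsection \ref{p1sne}.)

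The off-disk region is in fact still negligible, but by a polynomial estimate you never make: integrating in $v=\im\varsigma$ first, $\int_0^{u\tan\varphi}e^{-ctu^2v}\,dv\lesssim (tu^2)^{-1}$, and then $\int_{u\gtrsim c_0}(tu^2)^{-1}du\lesssim t^{-1}c_0^{-1}\lesssim t^{-2/3-\delta_1}$. The paper avoids the issue entirely by never splitting at $\mathrm{U}$: it bounds $|\bar\partial R_j|$ by a constant on $\Omega_1\cap\{|k|\le2\}$ (and by $1+|\re k-k_j|^{-1/2}$ for $|k|>2$), inserts the uniform bound $e^{-c_1tu^2v}$ (resp.\ $e^{-c_1tv}$), and computes $I_4\lesssim\iint e^{-c_1tu^2v}\,du\,dv\lesssim t^{-2/3}$ together with $I_5,I_6\lesssim t^{-1}$ — the same $u\sim v\sim t^{-1/3}$ scaling that your inside-the-disk computation captures, but applied to the whole sector at once. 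Your rescaling inside $\mathrm{U}$ (Jacobian $t^{-2/3}$, bounded rescaled integral, quartic remainder controlled by $\delta_1<\tfrac1{12}$) is essentially sound modulo routine care with the linear term $2s\operatorname{Im}\hat k$ and the location of the cone vertex; repair the outside-$\mathrm{U}$ step as above and your proof closes.
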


\begin{proof}
	In a similar way to  Lemma \ref{lp2ests}, we only  estimate the following  integral over $\Omega_{1}$.
Let $\varsigma = u+k_1 +v\mathrm{i} =|\varsigma|e^{\mathrm{i}w}$ with $u,v,w \in \mathbb{R}$, it follows that
	\begin{equation*}
		\iint_{\Omega_{1}} \frac{|W^{(4)}(\varsigma)|}{|\varsigma-e^{\frac{\pi \mathrm{i}}{6}}|} dA(\varsigma)  \lesssim I_4 + I_5 + I_6,
	\end{equation*}
	where
	\begin{align*}
		&I_4 = \iint_{\Omega_{1} \cap \{ |k|\le 2\} } \frac{e^{-c_1 t u^2 v}}{|\varsigma-e^{\frac{\pi \mathrm{i}}{6}}|} dA(\varsigma), \quad I_5 = \iint_{\Omega_{1} \cap \{ |k|> 2\} } \frac{e^{-c_1tv}}{|\varsigma-e^{\frac{\pi \mathrm{i}}{6}}|}dA(\varsigma),\\
		&I_6 = \iint_{\Omega_{1} \cap \{ |k|> 2\} }  \frac{|u|^{-1/2}e^{-c_1tv} }{|\varsigma-e^{\frac{\pi \mathrm{i}}{6}}|}dA(\varsigma).
	\end{align*}
	Noticing $|\varsigma - e^{\frac{\pi \mathrm{i}}{6}}|$ is bounded for $\varsigma \in \Omega_{1} \cap \{ |k|\le 2\}$,   direct calculation yields
	\begin{equation}\label{p1i4}
		I_4 \le \int_{0}^{2\sin w} \int_{v}^{2\cos w -k_1} e^{-c_1 t u^2 v} dudv \lesssim t^{-2/3}.
	\end{equation}
Moreover,  it is readily seen that
 $$I_5 \lesssim t^{-1}, \  \  I_6 \lesssim t^{-1},$$
which together with   \eqref{p1i4} gives the estimate \eqref{p1estm3}.

\end{proof}

\subsection{Proof of Theorem \ref{th1} for    $\mathcal{T}_{1}$}\label{proof1}

Inverting the sequence of transformations \eqref{defM1}, \eqref{defm2}, \eqref{p1m1m2}, \eqref{p1pdesM2RHP}, and  \eqref{p1m2m3}, the solution of RH problem  \ref{rhpvm} is given by
\begin{equation}
m(k)=m^{(4)}(k)E(k)\mathcal{R}^{(2)}(k)^{-1}T(k)^{-1}, \quad k \in \mathbb{C}\setminus \mathrm{U}.
\end{equation}
Taking $k =e^{\frac{\pi}{6}\mathrm{i}}$, and using \eqref{p1e2e} and \eqref{p1estm3}, we have
\begin{align}
m(e^{\frac{\pi}{6}\mathrm{i}})&= \begin{pmatrix} 1&1&1 \end{pmatrix} \left( I + t^{-1/3} E_1(e^{\frac{\pi}{6}\mathrm{i}}) \right) T(e^{\frac{\pi}{6}\mathrm{i}})^{-1}+\mathcal{O}(t^{-2/3+4\delta_1}),
\quad t\rightarrow\infty.\label{p1cm}
\end{align}
where $E_1(e^{\frac{\pi}{6}\mathrm{i}})$ is given by   \eqref{p1p1}.

Then, the solution of the DP equation \eqref{DP} can  be recovered from the reconstruction formula  \eqref{rescon} as follows
\begin{align}
u(y,t)
&= t^{-1/3}f_1(e^{\frac{\pi}{6}\mathrm{i}};y,t)+ \mathcal{O}(t^{-2/3+4\delta_1}), \label{p1u1}\\
x(y,t)
& = y+\log{T_{2}(e^{\frac{\pi}{6}\mathrm{i}})}- \log{T_{3}(e^{\frac{\pi}{6}\mathrm{i}})}+t^{-1/3} f_2(e^{\frac{\pi}{6}\mathrm{i}};y,t)  + \mathcal{O}(t^{-2/3+4\delta_1}), \nonumber
\end{align}
where
\begin{align}
    f_1(e^{\frac{\pi}{6}\mathrm{i}};y,t) = \frac{\partial }{\partial t}  f_2(e^{\frac{\pi}{6}\mathrm{i}};y,t),\quad
    f_2(e^{\frac{\pi}{6}\mathrm{i}};y,t) = \sum_{j=1}^3 E_1(e^{\frac{\pi}{6}\mathrm{i}})_{j2} -E_1(e^{\frac{\pi}{6}\mathrm{i}})_{j1},
\end{align}
and
$E_1(e^{\frac{\pi}{6}\mathrm{i}})_{ji}$ represents   the element in the
$j$-th row and
$i$-th column of the matrix  $E_1$.
Taking into account the boundedness of $\log{T_{2}(e^{\frac{\pi}{6}\mathrm{i}})}- \log{T_{3}(e^{\frac{\pi}{6}\mathrm{i}})}$, it is thereby inferred that $x/t-y/t=\mathcal{O}(t^{-1})$. Replacing $y/t$ by $x/t$ in \eqref{p1u1} yields  \eqref{ltp1}.

\section{Painlev\'e Asymptotics  in Transition Zone $\mathcal{T}_{2}$}\label{8}

\begin{figure}[htbp]
	\centering

	\subfigure[$0\le\hat{\xi}<3$]{\label{figured}
		\begin{minipage}[t]{0.32\linewidth}
			\centering
			\includegraphics[width=1.52in]{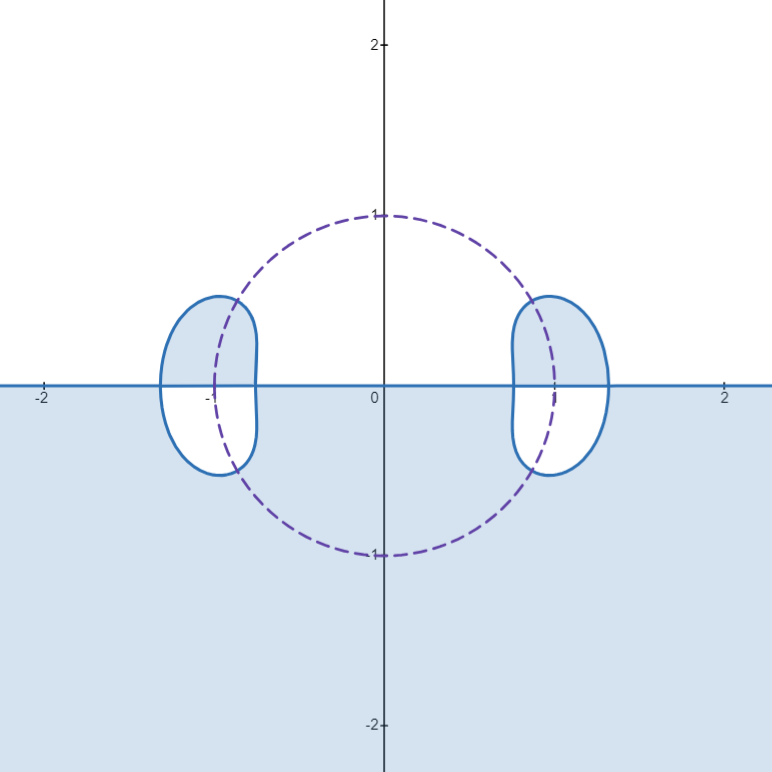}
		\end{minipage}
	}%
	\subfigure[$ \hat{\xi}=3$]{\label{figuree}
		\begin{minipage}[t]{0.32\linewidth}
			\centering
			\includegraphics[width=1.52in]{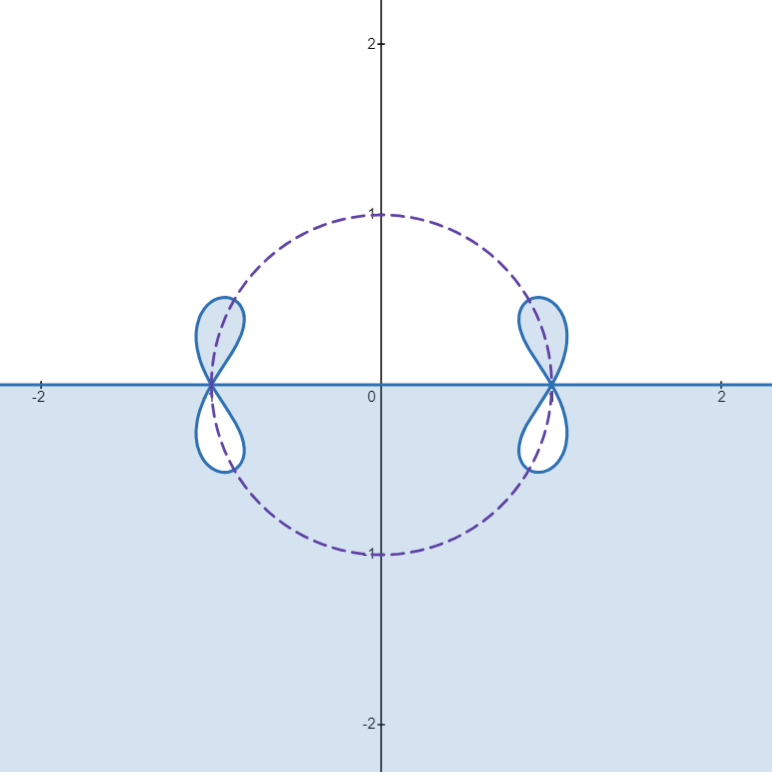}
		\end{minipage}
	}%
	\subfigure[$\hat{\xi}>3$]{\label{figuref}
		\begin{minipage}[t]{0.32\linewidth}
			\centering
			\includegraphics[width=1.52in]{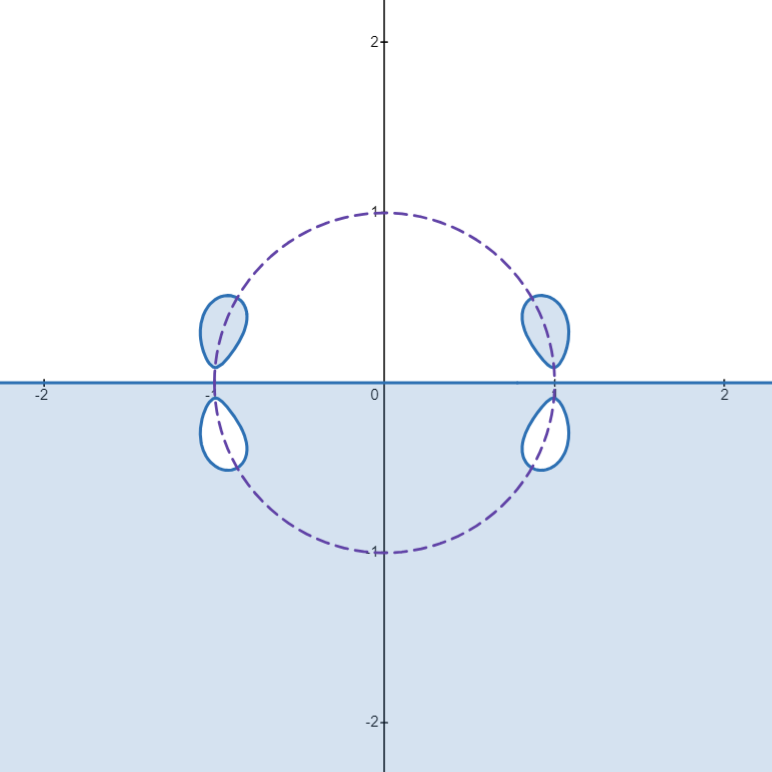}
		\end{minipage}
	}%
	\caption{\footnotesize Signature table of ${\rm Im}\theta_{12}(k)$ with different $\hat{\xi}$:
		$\textbf{(a)}$ $0\le \hat{\xi}<3$,
		$\textbf{(b)}$ $\hat{\xi}=3$,
		$\textbf{(c)}$ $\hat{\xi}>3$.
		The sectors where   ${\rm Im}\theta_{12}(k)<0$ and   ${\rm Im}\theta_{12}(k)>0$ are blue and white, respectively.
		Moreover, the  purple dotted line  stands for the unit circle.}
	\label{figthetb}
\end{figure}

In this section,  we  study the  Painlev\'e  asymptotics in  the transition  zone   $\mathcal{T}_{2}$  which  is  between  $\mathcal{Z}_2$  and
$\mathcal{S}_2$,  see Figures  \ref{figured}-\ref{figuref}.
Without loss of generality, we show that the leading asymptotics is  described by  the Painlev\'e  equation in the left-half
 zone
 $$\mathcal{T}_2^L= \mathcal{T}_2 \cap \{ (y,t): \hat{\xi}<3\}=\{ (y,t):-C<(\hat{\xi}-3)t^{2/3}<0\}.$$
  In this  case, there are 12 saddle points $\omega^l k_j, \,j=1,\cdots,4,\ l=0,1,2,$
 on three contours $\omega^l \mathbb{R}, \ l=0,1,2$,
among them  4 saddle points are  on   $\mathbb{R}$
\begin{equation}\label{sppostat}
 k_1 = - k_4 =  \frac{\sqrt{2}}{4} \left( \sqrt{s_1 +s_2 } + \sqrt{s_1+8 +s_2}\right) ,\quad
 k_2 = - k_3 =  \frac{1}{k_1},
\end{equation}
with $s_1$ and $s_2$ be given by \eqref{phas1s2}.
As $\hat{\xi} \to 3^-$, we have $s_1 \to  -3$ and $s_2 \to 3$, then
\begin{equation*}
k_1, k_2 \to  1=:k_a,\quad  k_3, k_4 \to  -1:=k_b,
\end{equation*}
from which we can have the corresponding critical points $\omega^l k_j$, $j=1,\cdots,4, \ l =1,2$.

The signature table in Figure \ref{figuree} inspires us to use the triangular factorization of the jump matrix 	$V^{(2)}(k)$  associated with RH problem $m^{(2)}(k)$ for $k \in \mathbb{R}$ in the following form
\begin{equation}\label{p2v2f}
	V^{(2)}(k) = \left(\begin{array}{ccc} 1&0&0 \\ -\bar{d}(k) e^{-\mathrm{i}t\theta_{12}(k) }&1&0 \\ 0&0&1\end{array}  \right)\left(\begin{array}{ccc} 1&d(k)e^{\mathrm{i}t\theta_{12}(k) } &0 \\ 0&1&0 \\ 0&0&1\end{array}  \right),
\end{equation}
where
\begin{equation}
	d(k):= \bar{r}(k)\frac{T_2(k)}{T_1(k)}.
\end{equation}
The factorization of $	V^{(2)}(k) $ on $\omega \mathbb{R}$ and $\omega^2 \mathbb{R}$  can be given by the symmetries.
Next, we open the $\bar\partial$ lenses using the factorization \eqref{p2v2f}  to obtain a hybrid $\bar{\partial}$-RH problem.

\subsection{Hybrid $\bar{\partial}$-RH problem }

Define
\begin{align*}
&I_1 = (k_1,\infty),\ I_2 = (k_0,k_2), \ I_3 = (k_3,k_0),\ I_4= (-\infty,k_4),\\
&I = \mathop{\cup}\limits_{i=1}^4 I_i, \ \omega I = \{\omega k:k\in I\}, \ \omega^2 I = \{\omega^2 k: k\in I \}.
\end{align*}
where we denote  $k_0:=0$.  We open the contour $I$ on $\mathbb{R}$ in a way like Subsection \ref{subsec31}, denote
 all opened sectors $\Omega_{j}$  and boundaries $\Sigma_{j}$ with  $  j=0,1,2,3,4$  depicted in Figure \ref{trans2cr}.
 Further, denote $\Sigma_{0j}, j=2,3$  as the boundaries between $\Omega_0$ and $\Omega_j, j=2,3$, respectively.
 The contours  $\omega I$ and $\omega^2 I$ can be opened  by the symmetries, and the whole contour $ \Sigma^{(3)}$  is given in  Figure \ref{pf2v3}.

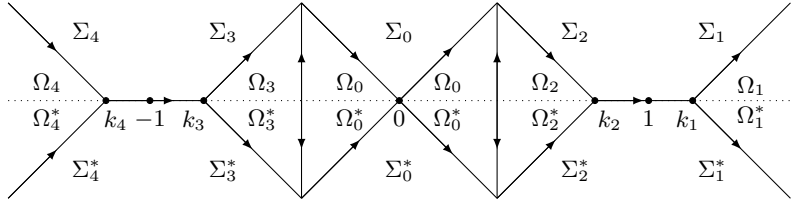
\begin{figure}[http]
	\centering
\vspace{2mm}
	\begin{tikzpicture}[scale=1.3]
	\draw[dotted](-4,0)--(4,0);
	
	\draw [  ](-2,0) -- (-1,1);
    \draw [  ]   (-1,1)--(0,0);	 	
	\draw [  ](2,0) -- (1,1);
	\draw [  ](1,1)--(0,0);
	\draw [-latex,  ](-2,0) -- (-1.5,0.5);
    \draw [-latex,  ](-1,1) -- (-0.4,0.4);	
	\draw [-latex,  ](1,1) -- (1.6,0.4);
    \draw [-latex,  ](0,0) -- (0.55,0.55);

	\draw [  ](-2,0) -- (-1,-1);
    \draw [  ](0,0) -- (-1,-1);
    \draw [  ](2,0) -- (1,-1);
    \draw [  ](0,0) -- (1,-1);
    \draw [-latex,  ](-2,0) -- (-1.5,-0.5);
    \draw [-latex,  ](-1,-1) -- (-0.4,-0.4);	
    \draw [-latex,  ](1,-1) -- (1.6,-0.4);
    \draw [-latex,  ](0,0) -- (0.55,-0.55);		

	\draw [  ](-4,1) -- (-3,0);
	\draw [-latex,  ](-4,1) -- (-3.5,0.5);
	\draw [  ](-4,-1)--(-3,0);
	\draw [-latex,  ](-4,-1) -- (-3.5,-0.5);
	\draw [  ](4,1) -- (3,0);
	\draw [-latex,  ](3,0) -- (3.5,0.5);
	\draw [  ](4,-1) --(3,0);
	\draw [-latex,  ](3,0) -- (3.5,-0.5);
	
	\draw[  ] (1,0)--(1,1);
	\draw[-latex,  ] (1,0)--(1,0.5);
    \draw[  ](-1,-0)--(-1,-1);
	\draw[-latex,  ](-1,-0)--(-1,-0.5);
		\draw[  ] (-1,0)--(-1,1);
	\draw[-latex,  ] (-1,0)--(-1,0.5);
	\draw[  ](1,-0)--(1,-1);
	\draw[-latex,  ](1,-0)--(1,-0.5);
	
 \draw[  ](-3,0)--(-2,0);
\draw[-latex,  ](-3,0)--(-2.3,0);
\draw[  ](3,0)--(2,0);
\draw[-latex,  ](2,0)--(2.5,0);

	\filldraw [ ](0,0) circle [radius=0.03];
	\node  [below]  at (0,0) {\footnotesize$0$};	
	\filldraw [ ](3,0) circle [radius=0.03];
	\filldraw [ ](2,0) circle [radius=0.03];	 	
	\filldraw [ ](-3,0) circle [radius=0.03];
	\filldraw [ ](-2,0) circle [radius=0.03];

	\node  [below]  at (2.15,0) {\footnotesize$k_{2}$};
	\node  [below]  at (2.95,0) {\footnotesize$k_{1}$};

	\filldraw [ ](2.55,0) circle [radius=0.03];
	\node  [below]  at (2.55,0) {\footnotesize$1$};		
	
	\filldraw [ ](-2.55,0) circle [radius=0.03];
	\node  [below]  at (-2.55,0) {\footnotesize$-1$};		
	
	\node  [below]  at (-2.1,0) {\footnotesize$k_{3}$};
	\node  [below]  at (-2.9,0) {\footnotesize$k_{4}$};

	\node  [above]  at (1.8,0.5) {\footnotesize$\Sigma_{2}$};
	\node  [below]  at (1.8,-0.5) {\footnotesize$\Sigma_{2}^*$};
	\node  [above]  at (3.2,0.5) {\footnotesize$\Sigma_{1}$};
	\node  [below]  at (3.2,-0.5) {\footnotesize$\Sigma_{1}^*$};

		\node  [above]  at (0,0.5) {\footnotesize$\Sigma_{0}$};
	\node  [below]  at (-0,-0.5) {\footnotesize$\Sigma_{0}^*$};
	
	\node  [above]  at (-1.8,0.5) {\footnotesize$\Sigma_{3}$};
	\node  [below]  at (-1.8,-0.5) {\footnotesize$\Sigma_{3}^*$};
	\node  [above]  at (-3.2,0.5) {\footnotesize$\Sigma_{4}$};
	\node  [below]  at (-3.2,-0.5) {\footnotesize$\Sigma_{4}^*$};
	

			
	\node  [above]  at (0.5,-0.02) {\footnotesize$\Omega_{0}$};
	\node  [below]  at (-0.5,0.02) {\footnotesize$\Omega_{0}^*$};
	\node  [above]  at (-0.5,-0.02) {\footnotesize$\Omega_{0}$};
	\node  [below]  at (0.5,0.02) {\footnotesize$\Omega_{0}^*$};
	
	\node  [above]  at (1.5,-0.02) {\footnotesize$\Omega_{2}$};
	\node  [below]  at (1.5,0.02) {\footnotesize$\Omega_{2}^*$};
	\node  [above]  at (3.6,-0.05) {\footnotesize$\Omega_{1}$};
	\node  [below]  at (3.6,0.05) {\footnotesize$\Omega_{1}^*$};

	\node  [above]  at (-1.4,-0.02) {\footnotesize$\Omega_{3}$};
	\node  [below]  at (-1.4,0.02) {\footnotesize$\Omega_{3}^*$};
	\node  [above]  at (-3.6,-0.02) {\footnotesize$\Omega_{4}$};
	\node  [below]  at (-3.6,0.02) {\footnotesize$\Omega_{4}^*$};

	\end{tikzpicture}
	\caption{\footnotesize The  contour obtained after opening a part contour  $I$  in  $\mathcal{T}_{2}$}
	\label{trans2cr}
\end{figure}

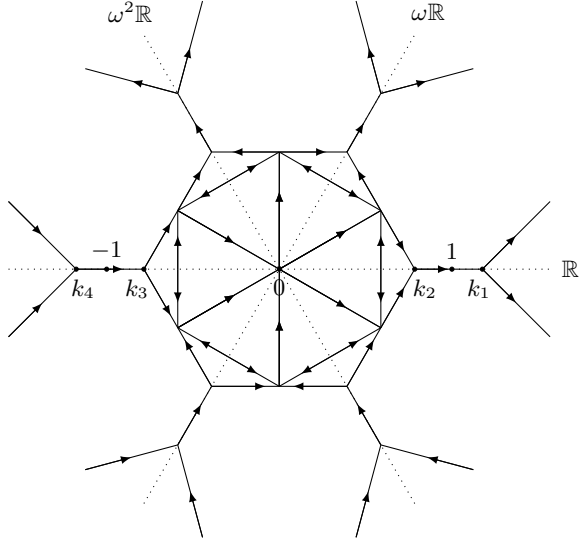
\begin{figure}[http]
	\centering
	\begin{tikzpicture}[scale=0.9]
		\draw[dotted](-4,0)--(4,0);
\node    at (4.3,0) {\footnotesize$\mathbb{R}$};
\node    at (2.2,3.8) {\footnotesize$\omega\mathbb{R}$};
\node    at (-2.2,3.8) {\footnotesize$\omega^2\mathbb{R}$};

\draw [  ] (-2,0)--(-1.5,0.866);
\draw [  ](2,0)-- (1.5,0.866);
\draw[-latex,  ] (-2,0)--(-1.6,0.6928);
\draw[-latex,  ] (1.5,0.866)--(1.85,0.2598);			
\draw [  ] (0,0)--(-1.5,0.866);
\draw [  ] (0,0)--(1.5,0.866);
\draw[-latex,  ] (-1.5,0.866)--(-0.65,0.3753);
\draw[-latex,  ] (0, 0)--(1, 0.5773);

\draw [  ](-2,0) -- (-1.5,-0.866);
\draw [  ](2,0)-- (1.5,-0.866);
\draw[-latex,  ](-2,0)--(-1.6,-0.6928);
\draw[-latex,  ](1.5,-0.866)--(1.85,-0.2598);	
\draw [  ](0,0) -- (-1.5,-0.866);
\draw [  ](0,0)-- (1.5,-0.866);
\draw[-latex,  ](-1.5,-0.866)--(-0.65,-0.3753);
\draw[-latex,  ](0,0)--(1,-0.5773);	

 \draw[  ](-1.5,0.866)--(-1.5,0);
\draw[  ](-1.5,-0.866)--(-1.5,0);
\draw[-latex,  ](-1.5,0)--(-1.5,0.5);
\draw[-latex,  ](-1.5,0)--(-1.5,-0.5);

\draw[  ](1.5,0.866)--(1.5,0);
\draw[  ](1.5,-0.866)--(1.5,0);
\draw[-latex,  ](1.5,0)--(1.5,0.5);
\draw[-latex,  ](1.5,0)--(1.5,-0.5);

\draw [  ](-4,1) -- (-3,0);
\draw [-latex,  ](-4,1) -- (-3.5,0.5);
\draw [  ](-4,-1)--(-3,0);
\draw [-latex,  ](-4,-1) -- (-3.5,-0.5);
\draw [  ](4,1) -- (3,0);
\draw [-latex,  ](3,0) -- (3.5,0.5);
\draw [  ](4,-1) --(3,0);
\draw [-latex,  ](3,0) -- (3.5,-0.5);

\draw[  ](-3,0)--(-2,0);
\draw[-latex,  ](-3,0)--(-2.3,0);
\draw[  ](3,0)--(2,0);
\draw[-latex,  ](2,0)--(2.5,0);

		\draw[dotted,rotate=60](-4,0)--(4,0);

\draw [  , rotate=60] (-2,0)--(-1.5,0.866);
\draw [  , rotate=60](2,0)-- (1.5,0.866);
\draw[-latex,  , rotate=60] (-2,0)--(-1.6,0.6928);
\draw[-latex,  , rotate=60] (1.5,0.866)--(1.85,0.2598);			
\draw [  , rotate=60] (0,0)--(-1.5,0.866);
\draw [  , rotate=60] (0,0)--(1.5,0.866);
\draw[-latex,  , rotate=60] (-1.5,0.866)--(-0.65,0.3753);
\draw[-latex,  , rotate=60] (0, 0)--(1, 0.5773);

\draw [  , rotate=60](-2,0) -- (-1.5,-0.866);
\draw [  , rotate=60](2,0)-- (1.5,-0.866);
\draw[-latex,  , rotate=60](-2,0)--(-1.6,-0.6928);
\draw[-latex,  , rotate=60](1.5,-0.866)--(1.85,-0.2598);	
\draw [  , rotate=60](0,0) -- (-1.5,-0.866);
\draw [  , rotate=60](0,0)-- (1.5,-0.866);
\draw[-latex,  , rotate=60](-1.5,-0.866)--(-0.65,-0.3753);
\draw[-latex,  , rotate=60](0,0)--(1,-0.5773);

 \draw[  , rotate=60](-1.5,0.866)--(-1.5,0);
\draw[  , rotate=60](-1.5,-0.866)--(-1.5,0);
\draw[-latex,  , rotate=60](-1.5,0)--(-1.5,0.5);
\draw[-latex,  , rotate=60](-1.5,0)--(-1.5,-0.5);

\draw[  , rotate=60](1.5,0.866)--(1.5,0);
\draw[  , rotate=60](1.5,-0.866)--(1.5,0);
\draw[-latex,  , rotate=60](1.5,0)--(1.5,0.5);
\draw[-latex,  , rotate=60](1.5,0)--(1.5,-0.5);

\draw [  , rotate=60](-4,1) -- (-3,0);
\draw [-latex,  , rotate=60](-4,1) -- (-3.5,0.5);
\draw [  , rotate=60](-4,-1)--(-3,0);
\draw [-latex,  , rotate=60](-4,-1) -- (-3.5,-0.5);
\draw [  , rotate=60](4,1) -- (3,0);
\draw [-latex,  , rotate=60](3,0) -- (3.5,0.5);
\draw [  , rotate=60](4,-1) --(3,0);
\draw [-latex,  , rotate=60](3,0) -- (3.5,-0.5);

\draw[  , rotate=60](-3,0)--(-2,0);
\draw[-latex,  , rotate=60](-3,0)--(-2.3,0);
\draw[  , rotate=60](3,0)--(2,0);
\draw[-latex,  , rotate=60](2,0)--(2.5,0);

		\draw[dotted,rotate=120](-4,0)--(4,0);

\draw [  ,rotate=120] (-2,0)--(-1.5,0.866);
\draw [  ,rotate=120](2,0)-- (1.5,0.866);
\draw[-latex,  ,rotate=120] (-2,0)--(-1.6,0.6928);
\draw[-latex,  ,rotate=120] (1.5,0.866)--(1.85,0.2598);			
\draw [  ,rotate=120] (0,0)--(-1.5,0.866);
\draw [  ,rotate=120] (0,0)--(1.5,0.866);
\draw[-latex,  ,rotate=120] (-1.5,0.866)--(-0.65,0.3753);

\draw [  ,rotate=120](-2,0) -- (-1.5,-0.866);
\draw [  ,rotate=120](2,0)-- (1.5,-0.866);
\draw[-latex,  ,rotate=120](-2,0)--(-1.6,-0.6928);
\draw[-latex,  ,rotate=120](1.5,-0.866)--(1.85,-0.2598);	
\draw [  ,rotate=120](0,0) -- (-1.5,-0.866);
\draw [  ,rotate=120](0,0)-- (1.5,-0.866);
\draw[-latex,  ,rotate=120](0,0)--(1,-0.5773);	

\draw[  ,rotate=120](-1.5,0.866)--(-1.5,0);
\draw[  ,rotate=120](-1.5,-0.866)--(-1.5,0);
\draw[-latex,  ,rotate=120](-1.5,0)--(-1.5,0.5);
\draw[-latex,  ,rotate=120](-1.5,0)--(-1.5,-0.5);

\draw[  ,rotate=120](1.5,0.866)--(1.5,0);
\draw[  ,rotate=120](1.5,-0.866)--(1.5,0);
\draw[-latex,  ,rotate=120](1.5,0)--(1.5,0.5);
\draw[-latex,  ,rotate=120](1.5,0)--(1.5,-0.5);

\draw [  ,rotate=120](-4,1) -- (-3,0);
\draw [-latex,  ,rotate=120](-4,1) -- (-3.5,0.5);
\draw [  ,rotate=120](-4,-1)--(-3,0);
\draw [-latex,  ,rotate=120](-4,-1) -- (-3.5,-0.5);
\draw [  ,rotate=120](4,1) -- (3,0);
\draw [-latex,  ,rotate=120](3,0) -- (3.5,0.5);
\draw [  ,rotate=120](4,-1) --(3,0);
\draw [-latex,  ,rotate=120](3,0) -- (3.5,-0.5);

\draw[  ,rotate=120](-3,0)--(-2,0);
\draw[-latex,  ,rotate=120](-3,0)--(-2.3,0);
\draw[  ,rotate=120](3,0)--(2,0);
\draw[-latex,  ,rotate=120](2,0)--(2.5,0);

			\filldraw [ ](0,0) circle [radius=0.03];
		\node  [below]  at (0,0) {\footnotesize$0$};	
		\filldraw [ ](3,0) circle [radius=0.03];
		\filldraw [ ](2,0) circle [radius=0.03];	 	
		\filldraw [ ](-3,0) circle [radius=0.03];
		\filldraw [ ](-2,0) circle [radius=0.03];

		\node  [below]  at (2.15,0) {\footnotesize$k_{2}$};
		\node  [below]  at (2.95,0) {\footnotesize$k_{1}$};

		\filldraw [ ](2.55,0) circle [radius=0.03];
		\node  [above]  at (2.55,0) {\footnotesize$1$};		
		
		\filldraw [ ](-2.55,0) circle [radius=0.03];
		\node  [above]  at (-2.55,0) {\footnotesize$-1$};		
		
		\node  [below]  at (-2.1,0) {\footnotesize$k_{3}$};
		\node  [below]  at (-2.9,0) {\footnotesize$k_{4}$};	
		
	\end{tikzpicture}
	\caption{\footnotesize  The whole jump contour $ \Sigma^{(3)}$  for  $ m^{(3)}(k)$,  obtained by  opening three  contours  $\omega^lI, \ l=0, 1, 2$.}
\label{pf2v3}
\end{figure}

It can be shown that there exist  matrix continuous extension functions $R_{j}(k): \ \bar{\Omega}_{j}\rightarrow\mathbb{C}, \ j=0,\cdots,4,$ continuous on $\bar{\Omega}_{j}$, with continuous first partial derivative
on $\Omega_{j}$, and boundary values
\begin{align}
	&R_{j}(k)=\Bigg\{\begin{array}{ll}
		-d(k), \ k\in I_{j},\\
	-d(k_{j}), \ k\in\Sigma_{j},
	\end{array}
\end{align}
which have the similar estimates with  \eqref{p1est0}-\eqref{p1est2}.

Using $R_{j}(k), \ j=0, \cdots,4$, we introduce
\begin{align}
	\mathcal{R}^{(2)}(k)=\left\{
	\begin{aligned}
		&\begin{pmatrix} 1 & R_{i}(k)e^{\mathrm{i}t\theta_{12}(k)} & 0 \\  0 & 1 & 0 \\ 0 & 0 & 1 \end{pmatrix}, \quad k\in\Omega_{j},\\
		&\begin{pmatrix} 1 &  0 & 0 \\ R_{i}^*(k)e^{-\mathrm{i}t\theta_{12}(k)} & 1 & 0 \\ 0 & 0 & 1 \end{pmatrix}, \quad k\in\Omega_{j}^*,\\
		&\begin{pmatrix} 1 & 0 & 0 \\ 0 & 1 & 0 \\ R_{i}(\omega^2 k)e^{-\mathrm{i}t\theta_{13}(k)} & 0 & 1 \end{pmatrix}, \quad k\in \omega\Omega_{j},\\
		&\begin{pmatrix} 1 & 0 & R_{i}^*(\omega^2 k)e^{\mathrm{i}t\theta_{13}(k)}  \\ 0 & 1 & 0 \\ 0 & 0 & 1\end{pmatrix}, \quad k\in\omega\Omega_{j}^*,\\
		&\begin{pmatrix} 1 & 0 & 0 \\ 0 & 1 & \ R_{i}(\omega k)e^{\mathrm{i}t\theta_{23}(k)} \\ 0 & 0 & 1 \end{pmatrix}, \quad k\in \omega^2\Omega_{j},\\
		&\begin{pmatrix} 1 & 0 & 0 \\ 0 & 1 & 0 \\ 0 & R_{i}^*(\omega k)e^{-\mathrm{i}t\theta_{23}(k)} & 1\end{pmatrix}, \quad k\in \omega^2\Omega_{j}^*,\\
		&I, \quad elsewhere,
	\end{aligned}
	\right.
\end{align}
and make a transformation
\begin{equation}\label{p2m1m2}
m^{(3)}(k) = m^{(2)}(k) \mathcal{R}^{(2)}(k).
\end{equation}
 Then we obtain a  hybrid $\bar\partial$-RH problem for $m^{(3)}(k)$
 which satisfies  the jump condition
 \begin{equation*}
     m^{(3)}_+(k)=m^{(3)}_-(k)V^{(3)}(k), \quad k \in \Sigma^{(3)},
 \end{equation*}
where
\begin{align*}
&\Sigma^{(3)} =   \mathop{\cup}\limits_{l=0}^2 \omega^l  \left(  (\mathbb{R}\setminus  I ) \cup (\Sigma \cup \Sigma^*) \cup (\tilde \Sigma\cup \tilde \Sigma^*) \right),
\end{align*}
with $\Sigma=\mathop{\cup}\limits_{j=0}^4\Sigma_j,
\ \ \tilde\Sigma= \Sigma_{02}\mathop{\cup}\Sigma_{03},$ and
\begin{align}
&V^{(3)}(k)=
\left\{
\begin{aligned}
    & V^{(2)}(k), \quad k \in  \mathop{\cup}\limits_{l=0}^2 \omega^l (\mathbb{R}\setminus  I),\\
    &\mathcal{R}^{(2)}(k)|_{k\in\Omega_{il}}^{-1}\mathcal{R}^{(2)}(k)|_{k\in\Omega_{jl}}, \quad k\in \mathop{\cup}\limits_{l=0}^2 \omega^l \tilde{\Sigma}, \\
    &\mathcal{R}^{(2)}(k)|_{k\in\Omega_{jl}}^{-1}\mathcal{R}^{(2)}(k)|_{k\in\Omega_{il}}, \quad k \in \mathop{\cup}\limits_{l=0}^2 \omega^l \tilde{\Sigma}^*,\\
    & \mathcal{R}^{(2)}(k')^{-1}, \quad k \in \mathop{\cup}\limits_{l=0}^2 \omega^l  {\Sigma},\\
    & \mathcal{R}^{(2)}(k'), \quad k\in \mathop{\cup}\limits_{l=0}^2 \omega^l  {\Sigma}^*.\label{p2v2}
\end{aligned}
\right.
\end{align}
Moreover, for $k \in\mathbb{C}$, we have
\begin{align*}
	\bar{\partial}m^{(3)}(k)=m^{(3)}(k)\bar{\partial}\mathcal{R}^{(2)}(k),
\end{align*}
where
\begin{align}\label{p2dbarr2}
	\bar{\partial} \mathcal{R}^{(2)}(k)=\left\{
	\begin{aligned}
		&\begin{pmatrix} 0 & 	\bar{\partial}R_{j}(k)e^{\mathrm{i}t\theta_{12}(k)} & 0 \\  0 & 0 & 0 \\ 0 & 0 & 0 \end{pmatrix}, \quad k\in\Omega_{j},\\
		&\begin{pmatrix} 0 &  0 & 0 \\ 	\bar{\partial}R_{j}^*(k)e^{-\mathrm{i}t\theta_{12}(k)} & 1 & 0 \\ 0 & 0 & 0 \end{pmatrix}, \quad k\in\Omega_{j}^*,\\
		&\begin{pmatrix} 0 & 0 & 0 \\ 0 & 0 & 0 \\ 	\bar{\partial}R_{j}(\omega^2 k)e^{-\mathrm{i}t\theta_{13}(k)} & 0 & 0 \end{pmatrix}, \quad k\in \omega\Omega_{j},\\
		&\begin{pmatrix} 0 & 0 & 	\bar{\partial}R_{j}^*(\omega^2 k)e^{\mathrm{i}t\theta_{13}(k)}  \\ 0 & 0 & 0 \\ 0 & 0 & 0\end{pmatrix}, \quad k\in\omega\Omega_{j}^*,\\
		&\begin{pmatrix} 0 & 0 & 0 \\ 0 & 0 & \ 	\bar{\partial}R_{j}(\omega k)e^{\mathrm{i}t\theta_{23}(k)} \\ 0 & 0 & 0 \end{pmatrix}, \quad k\in \omega^2\Omega_{j},\\
		&\begin{pmatrix} 0 & 0 & 0 \\ 0 & 0 & 0 \\ 0 & 	\bar{\partial}R_{j}^*(\omega k)e^{-\mathrm{i}t\theta_{23}(k)} & 0\end{pmatrix}, \quad k\in\omega^2\Omega_{j}^*,\\
		&0, \quad elsewhere.
	\end{aligned}
	\right.
\end{align}

The above hybrid $\bar\partial$-RH problem can again be decomposed into a pure RH problem  and a pure $\bar\partial$-problem. The next two subsections are then devoted to the asymptotic analysis of these two problems separately.

\subsection{Asymptotic analysis on a pure  RH problem}

 By omitting the $\bar\partial$-derivative part
of the $\bar\partial$-RH problem for $m^{(3)}(k)$,
  we obtain the following  pure RH problem.

\begin{RHP}\label{p2purerhp}
Find a  matrix-valued function  $M^{rhp}(k):= M^{rhp}(k;y,t)$ such that
\begin{itemize}
\item $M^{rhp}(k)$ is analytic in
$\mathbb{C}\setminus \Sigma^{(3)}$.
\item $M^{rhp}(k)$ has continuous boundary values $M^{rhp}_\pm(k)$ on $\Sigma^{(3)}$ and
\begin{equation}
	M^{rhp}_+(k)=M^{rhp}_-(k)V^{(3)}(k),\hspace{0.5cm}k \in \Sigma^{(3)},
\end{equation}
where $V^{(3)}(k)$ is defined by \eqref{p2v2}.
\item
	$M^{rhp}(k) =I+\mathcal{O}(k^{-1}),\hspace{0.5cm}k \rightarrow \infty.$
\end{itemize}	
\end{RHP}

Define small disks around critical points $ \omega^lk_j$
 $$\mathrm{U}_{jl} := \{k \in \mathbb{C}: |k-\omega^lk_j| \le c_0\}, \,  \, j\in \{a,b\},\, l=0, 1,2,$$
 where  the  radius $c_0$ defined by
\begin{equation}
	c_0:= \min \left\{\frac{1}{2}, 2(k_1-k_a)t^{\delta_2} \right\},
\end{equation}
where $\delta_2$ is a constant satisfying $\frac{1}{27}<\delta_2<\frac{1}{12}$.
Then, there exists a time $T$ such that the saddle points are in
$\mathrm{U} :=\mathop{\cup}\limits_{j \in \{a,b\}} (\mathrm{U}_j\cup \mathrm{U}_{j1}\cup \mathrm{U}_{j2})$ when $t>T$. Indeed, for  $ \hat \xi\in \mathcal{T}_2^L$, we have
$$|k_j -k_a| \le \sqrt{3^{-1}C} t^{-1/3}, \  j = 1,2,\quad |k_j -k_b | \le \sqrt{3^{-1}C} t^{-1/3},  \ j = 3,4,$$
which reveals that $c_0 \lesssim t^{\delta_2-1/3}$ as $t \to \infty$.

Now, we construct the solution $M^{rhp}(k)$ as follows:
\begin{align}\label{p2pdesM2RHP}
M^{rhp}(k)=\left\{
\begin{aligned}
    &E(k), \quad k\notin \mathrm{U},\\
    &E(k)M^{loc}(k), \quad k\in \mathrm{U},
\end{aligned}
\right.
\end{align}
where  $M^{loc}(k)$ is the solution of a local model, and the error function $E(k)$ is the solution of a small-norm RH problem.  Next, we construct the solution $M^{loc}(k)$.

\subsubsection{Local models} \label{p2lomod}
Similar to Subsection \ref{p1lomod}, we can construct six local models    $M_{jl}(k), j\in \{a,b\}, l =0,1,2$ with  the corresponding contours  $\Sigma_{jl} := \Sigma^{(3)} \cap \mathrm{U}_{jl}$.  Using the construction of $M_{j0}(k)$, $j=a,b$, as an example, the other cases can be given similarly.

\begin{RHP}\label{p2mlj}
Find a $3\times 3$ matrix-valued function $M_{j0}(k):=M_{j0}(k;y,t)$ such that
\begin{itemize}
\item $M_{j0}(k)$ is analytic in $\mathbb{C} \setminus \Sigma_{j0} $.
\item  For $k \in \Sigma_{j0} $, $M_{j0,+}(k)=M_{j0,-}(k)V_{j0} (k),$
where $V_{j0} (k) = V^{(3)}(k)|_{k \in \Sigma_{j0}  }$.
\item As  $k \to \infty$  in $\mathbb{C} \setminus \Sigma_j$,
$M_{j0}(k)=I+\mathcal{O}(k^{-1}).$
\end{itemize}
\end{RHP}
In order to match RH problems \ref{p2mlj}  with the model RH problem in \ref{appx},
the phase function $t\theta_{12}(k)$  is approximated with  scaled  variables as follows.
\begin{itemize}
\item For $k$ close to $k_a$,
 \begin{align}
  t \theta_{12}(k)
   = \frac{8}{3} \hat{k}^3 + 2 s \hat{k} +\mathcal{O}(\hat{k}^4 t^{-\frac{1}{3}}),
 \end{align}
where
\begin{align}\label{2phatk}
 \hat{k} = 3^{\frac{2}{3}} t^{\frac{1}{3}}(k-k_a), \quad s = 3^{-\frac{2}{3}} t^{\frac{2}{3}}(\hat{\xi}-3).
\end{align}
\item  For $k$ close to $k_b$,
 \begin{align}
  t \theta_{12}(k)
   = \frac{8}{3} \check{k}^3 + 2 s \check{k} +\mathcal{O}(\check{k}^4 t^{-\frac{1}{3}}),
 \end{align}
where $s$ is defined by \eqref{2phatk} and
\begin{align}\label{2psmb}
\check{k} = 3^{\frac{2}{3}} t^{\frac{1}{3}}(k-k_b).
\end{align}
\end{itemize}

Now we take the local model for $M_{a0}(k)$  in $\mathrm{U}_{a0}$ as an example to  match  the Painlev\'e  model, and other local models can be constructed similarly.

\subparagraph{Step I: Scaling.}
  Define the contour $\hat{\Sigma}_a$ in the $\hat{k}$-plane
\begin{equation*}
  \hat{\Sigma}_a := \mathop{\cup}\limits_{j=1}^2 (\hat{\Sigma}_{j} \cup \hat{\Sigma}_{j}^*) \cup (\hat{k}_1, \hat{k}_2),
\end{equation*}
 corresponding to the contour $\Sigma_a$ after scaling $k$ to the new variable $\hat{k}$, where
\begin{align*}
   &\hat{\Sigma}_{1} = \{\hat{k}: \hat{k}-\hat{k}_1 = l e^{\mathrm{i} (\pi-\varphi) }, 0\le l \le c_0  3^{\frac{2}{3}} t^{\frac{1}{3}}\}, \
   \hat{\Sigma}_{2} = \{\hat{k}: \hat{k}-\hat{k}_2 = l e^{\mathrm{i}\varphi }, 0\le l \le c_0 3^{\frac{2}{3}} t^{\frac{1}{3}}\},
\end{align*}
with $\hat{k}_j =3^{\frac{2}{3}} t^{\frac{1}{3}}(k_j-k_a), j=1,2$.
After scaling, we obtain the following RH problem in the $\hat{k}$-plane.

\begin{RHP}\label{p2mloc2}
Find a $3\times 3$ matrix-valued function $ {M}_{a0}(\hat{k}):=  {M}_{a0}(\hat{k}; y,t)$ such that
\begin{itemize}
\item  $ {M}_{a0}(\hat{k})$ is analytic in $\mathbb{C} \setminus \hat{\Sigma}_{a}$.
\item For $\hat{k} \in \hat{\Sigma}_{a}$, we have  $ {M}_{a0,+}(\hat{k})= {M}_{a0,-}(\hat{k})\hat{V}_{a}(\hat{k})$, where
\begin{equation}
\hat{V}_a(\hat{k}) = \begin{cases}
            \left(\begin{array}{ccc} 1&  d(k_j)e^{\mathrm{i}t\theta_{12} ( 3^{-\frac{2}{3}} t^{-\frac{1}{3}}\hat{k}+k_a) }&0 \\ 0&1&0 \\ 0&0&1\end{array}  \right), \,  \hat{k} \in \hat{\Sigma}_{j}, \, j=1,2,\\
            \left(\begin{array}{ccc} 1&0&0 \\ -d^*(k_j)  e^{-\mathrm{i}t\theta_{12} ( 3^{-\frac{2}{3}} t^{-\frac{1}{3}}\hat{k} +k_a) }&1&0 \\ 0&0&1\end{array}  \right), \,  \hat{k} \in \hat{\Sigma}_{j}^*, \, j=1,2,\\
             V^{(3)}(3^{-\frac{2}{3}}  t^{-\frac{1}{3}}\hat{k} +k_a), \,  \hat{k} \in (\hat{k}_1, \hat{k}_2).
           \end{cases}
\end{equation}
\item As $\hat{k}\rightarrow\infty$ in $\mathbb{C} \setminus \hat{\Sigma}_{a}$, $ {M}_{a0}(\hat{k})=I+\mathcal{O}(\hat{k}^{-1}).$
\end{itemize}
\end{RHP}



\subparagraph{Step \uppercase\expandafter{\romannumeral2}: Matching with the model RH problem.}

To proceed, in a similar way to  Proposition \ref{p1mathch},
it can be  shown that

\begin{proposition} \label{p2mathch}
  As $t\to\infty$,
 \begin{equation}\label{mathchp2}
   {M}_{a0}(\hat{k}) = \mathcal{A}^{-1} \Gamma_1 M^{\mathrm{L}}(\hat{k}) \Gamma_1 \mathcal{A}+ \mathcal{O}(t^{-\frac{1}{3}+4\delta_2}),
 \end{equation}
 where $M^{\mathrm{L}}(\hat{k})$ is the solution of RH problem \ref{modelp2} with  $c_1= \mathrm{i}|d(k_a)|, $
  and
 \begin{equation}
\mathcal{A} =   \left(\begin{array}{ccc}
e^{\mathrm{i}\left( \frac{\pi}{4}-\frac{\varphi_a}{2} \right) } & 0 &0 \\
0 & e^{-\mathrm{i}\left( \frac{\pi}{4}-\frac{\varphi_a}{2} \right)} & 0 \\
0 & 0 & 1
\end{array} \right), \quad \varphi_a = \arg \bar{r}( k_a) - \mathrm{i} \log T_{21}(k_a).
\end{equation}

\end{proposition}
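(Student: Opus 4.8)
The proof will run in close parallel with that of Proposition \ref{p1mathch}, the two structural novelties being the interchange of the upper- and lower-triangular factors in the factorization \eqref{p2v2f} relative to the one used in $\mathcal{T}_1$, and the opposite sign of the cubic term in the local phase (here $t\theta_{12}=\tfrac{8}{3}\hat k^3+2s\hat k+\mathcal{O}(\hat k^4 t^{-1/3})$, with no constant term since $\theta_{12}(k_a)=\theta_{12}(1)=0$). To absorb the factor interchange I would conjugate $M_{a0}$ not only by the diagonal matrix $\mathcal{A}$ but also by the permutation $\Gamma_1$ of \eqref{Gamma}, setting
\begin{equation}
\hat{M}(\hat{k}) = \Gamma_1 \mathcal{A}\, {M}_{a0}(\hat{k})\, \mathcal{A}^{-1}\Gamma_1 . \nonumber
\end{equation}
Since $\Gamma_1$ and $\mathcal{A}$ are constant, $\hat{M}$ solves a RH problem with jump $\hat{V}(\hat{k}) = \Gamma_1 \mathcal{A}\, {V}_a(\hat{k})\, \mathcal{A}^{-1}\Gamma_1$ and normalization $I$ at infinity. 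Using $\Gamma_1^2=I$ and $\Gamma_1 \mathcal{A}\Gamma_1 = \mathcal{A}^{-1}$, establishing $\hat{M} = M^{\mathrm{L}} + \mathcal{O}(t^{-1/3+4\delta_2})$ is equivalent to the claimed formula \eqref{mathchp2}, so the task reduces to bounding $\hat{V}(\hat{k}) - V^{\mathrm{L}}(\hat{k})$ on the three parts of the scaled contour $\hat{\Sigma}_a$.

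On the central segment $(\hat{k}_1,\hat{k}_2)$ the oscillatory exponentials all have modulus one, so the difference collapses to $\bigl|d(3^{-2/3}t^{-1/3}\hat{k}+k_a) - d(k_a)\bigr|$; bounding $d'$ in $L^\infty$ and using $|\hat{k}| = \bigl|3^{2/3}t^{1/3}(k-k_a)\bigr| \lesssim t^{\delta_2}$ — which follows from $c_0 \lesssim t^{\delta_2 - 1/3}$ together with the scaling \eqref{2phatk} — yields a bound of order $t^{-1/3+\delta_2}$. On the lens boundaries $\hat{\Sigma}_j$ and $\hat{\Sigma}_j^*$, $j=1,2$, I would insert the cubic approximation recorded above \eqref{2phatk}--\eqref{2psmb}, whose leading term $\tfrac{8}{3}\hat k^3$ decays in the relevant sectors for the orientation chosen in Figures \ref{trans2cr}--\ref{pf2v3}; then $\bigl|e^{\mathrm{i}t\theta_{12}} - e^{\mathrm{i}(\frac{8}{3}\hat{k}^3 + 2s\hat{k})}\bigr| \le \bigl|e^{\mathcal{O}(\hat{k}^4 t^{-1/3})}-1\bigr| \lesssim t^{-1/3+4\delta_2}$, again by $|\hat{k}|\lesssim t^{\delta_2}$. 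Collecting the two estimates gives $\|\hat{V}-V^{\mathrm{L}}\|_{L^\infty(\hat{\Sigma}_a)} \lesssim t^{-1/3+4\delta_2}$, and the exponential decay along the lens makes this uniformly integrable; the small-norm (Beals--Coifman) argument then transfers the bound to the solutions, producing \eqref{mathchp2}.

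The one genuinely new point requiring care is the identification of the conjugated jump $\hat{V}$ with the model jump $V^{\mathrm{L}}$ of RH problem \ref{modelp2}. I would verify that $\Gamma_1$ exactly interchanges the lower/upper roles of the triangular factors in \eqref{p2v2f} so as to reproduce the structure of the model, that the positive sign of the cubic coefficient is consistent with this swap, and that the diagonal $\mathcal{A}$ absorbs the full argument of $d(k_a)=\bar r(k_a)T_2(k_a)/T_1(k_a)$ — here with no time-dependent phase to remove, precisely because $\theta_{12}(k_a)=0$ — leaving the real constant $c_1 = \mathrm{i}|d(k_a)|$ in the off-diagonal entries; this is what forces $\varphi_a = \arg\bar r(k_a) - \mathrm{i}\log T_{21}(k_a)$. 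This sign-and-permutation bookkeeping, both inherited from the geometry of the signature table in Figure \ref{figuree}, is the main obstacle; once it is settled the remaining estimates are identical in form to those of Proposition \ref{p1mathch}, with $\delta_1$ replaced by $\delta_2$.
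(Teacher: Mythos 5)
Your proposal is correct and follows essentially the same route as the paper's own argument: conjugate $M_{a0}$ by $\Gamma_1\mathcal{A}$, bound the jump discrepancy $\hat{V}-V^{\mathrm{L}}$ on the central segment (Lipschitz bound on $d$ together with $|\hat{k}|\lesssim t^{\delta_2}$) and on the lens boundaries (quartic phase error $|e^{\mathcal{O}(\hat{k}^4t^{-1/3})}-1|\lesssim t^{-1/3+4\delta_2}$), then invoke the small-norm theorem. The bookkeeping points you single out --- the $\Gamma_1$ swap of the triangular factors forced by the factorization \eqref{p2v2f} and the absence of a constant phase in $\varphi_a$ because $\theta_{12}(k_a)=0$ --- are precisely what the paper uses in adapting Proposition \ref{p1mathch} to $\mathcal{T}_2$.
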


From Proposition \ref{p2mathch}, we obtain the following result.

\begin{corollary} As $\hat{k} \to \infty$,
\begin{equation}
  {M}_{a0}(\hat{k}) =  I + \frac{  {M}_{a1}^{(1)}(s)}{\hat{k}} + \mathcal{O}(\hat{k}^{-2}),
 \end{equation}
 where
 \begin{align}
  {M}_{a1}^{(1)}(s) &= \frac{\mathrm{i}}{2}    \left(\begin{array}{ccc} \int_s^\infty v^2(\varsigma) \mathrm{d} \varsigma & -v(s)e^{\mathrm{i}\varphi_a}&0 \\ v(s)e^{-\mathrm{i}\varphi_a} &-\int_s^\infty v^2(\varsigma) \mathrm{d}\varsigma&0 \\ 0&0&0\end{array}  \right) + \mathcal{O}(t^{-\frac{1}{3}+4\delta_2}).
 \end{align}

\end{corollary}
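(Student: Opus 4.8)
The plan is to read the claimed expansion off directly from the matching relation established in Proposition \ref{p2mathch}, so that no new analysis is required. First I would recall from \ref{appx} the large-variable behaviour of the Painlev\'e~II model: the solution $M^{\mathrm{L}}(\hat{k})$ of RH problem \ref{modelp2} satisfies $M^{\mathrm{L}}(\hat{k}) = I + M^{\mathrm{L}}_1(s)/\hat{k} + \mathcal{O}(\hat{k}^{-2})$ as $\hat{k}\to\infty$, with its first moment encoded through the solution $v(s)$ of \eqref{p1pain2} and the tail integral $\int_s^\infty v^2(\varsigma)\,\mathrm{d}\varsigma$; under the normalization $c_1=\mathrm{i}|d(k_a)|$ fixed in Proposition \ref{p2mathch} this moment reads
\begin{equation}
M^{\mathrm{L}}_1(s) = \frac{1}{2}\begin{pmatrix} -\mathrm{i}\int_s^\infty v^2(\varsigma)\,\mathrm{d}\varsigma & v(s) & 0 \\ v(s) & \mathrm{i}\int_s^\infty v^2(\varsigma)\,\mathrm{d}\varsigma & 0 \\ 0 & 0 & 0 \end{pmatrix}.
\end{equation}

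Next I would substitute the relation $M_{a0}(\hat{k}) = \mathcal{A}^{-1}\Gamma_1 M^{\mathrm{L}}(\hat{k})\Gamma_1\mathcal{A} + \mathcal{O}(t^{-1/3+4\delta_2})$ of Proposition \ref{p2mathch} into this expansion. Because the conjugating factor $\mathcal{A}^{-1}\Gamma_1(\cdot)\Gamma_1\mathcal{A}$ is independent of $\hat{k}$, it commutes with the series in $\hat{k}^{-1}$: the $\mathcal{O}(\hat{k}^{-2})$ remainder is preserved, and the coefficient of $\hat{k}^{-1}$ equals $M_{a1}^{(1)}(s) = \mathcal{A}^{-1}\Gamma_1 M^{\mathrm{L}}_1(s)\Gamma_1\mathcal{A} + \mathcal{O}(t^{-1/3+4\delta_2})$, the additive error being inherited verbatim from the small-norm estimate underlying Proposition \ref{p2mathch}. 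It then remains to carry out the explicit conjugation. The permutation $\Gamma_1$ interchanges the first two rows and the first two columns of the upper $2\times2$ block, thereby swapping the two diagonal entries (which reverses the sign of the diagonal $\tfrac{\mathrm{i}}{2}\int_s^\infty v^2$) and transposing the off-diagonal pair, while $\mathcal{A}=\mathrm{diag}\big(e^{\mathrm{i}(\pi/4-\varphi_a/2)},e^{-\mathrm{i}(\pi/4-\varphi_a/2)},1\big)$ multiplies the $(1,2)$ and $(2,1)$ entries by $e^{\mp 2\mathrm{i}(\pi/4-\varphi_a/2)}=\mp\mathrm{i}\,e^{\pm\mathrm{i}\varphi_a}$. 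Performing these two operations on $M^{\mathrm{L}}_1(s)$ yields exactly the stated matrix $M_{a1}^{(1)}(s)$.

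Since the argument reduces to recording the $\hat{k}^{-1}$ term of a known model expansion and applying a fixed diagonal/permutation similarity, there is no genuine analytic obstacle; the only point requiring care is the bookkeeping of the phase factors. Specifically, one must check that the combination of the $\Gamma_1$ swap with the \emph{inverse} conjugation $\mathcal{A}^{-1}(\cdot)\mathcal{A}$ --- in contrast to the $\mathcal{A}(\cdot)\mathcal{A}^{-1}$ appearing in the $\mathcal{T}_1$ case of Section \ref{7} --- simultaneously flips the diagonal signs and inverts the off-diagonal phases relative to \eqref{p1ma1}. This controlled reversal is precisely the structural reason the $\mathcal{T}_2$ formula takes the form displayed, and it is manifestly consistent with the corresponding result in the first transition zone.
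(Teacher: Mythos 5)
Your proposal is correct and is essentially the paper's own (implicit) argument: the paper states this corollary as a direct consequence of Proposition \ref{p2mathch}, i.e.\ precisely your route of inserting the model expansion \eqref{stanp} with first moment \eqref{posee} (valid for $c_1=\mathrm{i}|d(k_a)|\in\mathrm{i}\mathbb{R}$) into the matching relation and computing the $\hat{k}$-independent conjugation $\mathcal{A}^{-1}\Gamma_1(\cdot)\Gamma_1\mathcal{A}$. Your bookkeeping checks out: the $\Gamma_1$ swap reverses the diagonal sign of $\tfrac{\mathrm{i}}{2}\int_s^\infty v^2(\varsigma)\,\mathrm{d}\varsigma$ and symmetrizes the off-diagonal pair, and the factors $\mp\mathrm{i}\,e^{\pm\mathrm{i}\varphi_a}$ from $\mathcal{A}^{-1}(\cdot)\mathcal{A}$ then give exactly the stated $M_{a1}^{(1)}(s)$, with the additive $\mathcal{O}(t^{-1/3+4\delta_2})$ error inherited from the proposition just as in the $\mathcal{T}_1$ analogue \eqref{p1ma1}.
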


Using a similar method, we can construct each local model $M_b(k)$ with scaled variable \eqref{2psmb} which have the following properties:
As $t\to\infty$,
\begin{equation}
 {M}_{b0} (\check{k}) = \mathcal{B}^{-1} \Gamma_1 M^{\mathrm{L}}(\check{k}) \Gamma_1 \mathcal{B}+ \mathcal{O}(t^{-\frac{1}{3}+4\delta_2}),
\end{equation}
where $M^{\mathrm{L}}(\check{k})$ is the solution of RH problem \ref{modelp2} with   $c_1=  \mathrm{i} |d(k_b)|,$
and
\begin{equation}
\mathcal{B} =   \left(\begin{array}{ccc}
e^{\mathrm{i}(\frac{\pi}{4}-\frac{\varphi_b}{2}) } & 0 &0 \\
0 & e^{-\mathrm{i}(\frac{\pi}{4}-\frac{\varphi_b}{2}) } & 0 \\
0 & 0 & 1
\end{array} \right), \quad \varphi_b = \arg \bar{r}(k_b) - \mathrm{i} \log T_{21}(k_b).
\end{equation}
Then, as $\check{k} \to \infty$,
\begin{equation}
 {M}_{b0}(\check{k}) =  I + \frac{ {M}_{b0}^{(1)}(s)}{\check{k}} + \mathcal{O}(\check{k}^{-2}),
\end{equation}
where
\begin{align}
{M}_{b0}^{(1)}(s) &= \frac{\mathrm{i}}{2}    \left(\begin{array}{ccc} \int_s^\infty v^2(\varsigma) \mathrm{d} \varsigma & -v(s)e^{\mathrm{i}\varphi_b}&0 \\ v(s)e^{-\mathrm{i}\varphi_b} &-\int_s^\infty v^2(\varsigma) \mathrm{d}\varsigma&0 \\ 0&0&0\end{array}  \right) + \mathcal{O}(t^{-\frac{1}{3}+4\delta_2}).
\end{align}

Other local models $M_{jl}(k), j\in \{a,b\},\ l = 1,2$ can be constructed by using the symmetries. Then $M^{loc}(k)$ can be constructed  as follows.

\begin{proposition} \label{p2promlo}
As $t \to \infty$,
\begin{align}\label{p2mlok}
  M^{loc}(k)&=I  + t^{-1/3}  M^{loc}_1(k,s) +\mathcal{O}(t^{-2/3+4\delta_2}),
   \end{align}
   where
 \begin{equation}
 M^{loc}_1(k,s)=	 -3^{-\frac{2}{3}}  \sum\limits_{j=a,b }  \left( \frac{M_{j }^{(1)}(s)} { k-   k_j  } +     \frac{\omega \Gamma_3  \overline{M_{j }^{(1)}(s)}  \Gamma_3 }{ k- \omega  k_j } +
 \frac{   \omega ^2 \Gamma_2  \overline{M_{j }^{(1)}(s)}  \Gamma_2}{ k- \omega^2 k_j }  \right),
\end{equation}
with
 \begin{align*}
   M_{j}^{(1)}(s) =\frac{\mathrm{i}}{2}    \left(\begin{array}{ccc} \int_s^\infty v^2(\varsigma) \mathrm{d} \varsigma & -v(s)e^{\mathrm{i}\varphi_j}&0 \\ v(s)e^{-\mathrm{i}\varphi_j} &-\int_s^\infty v^2(\varsigma) \mathrm{d}\varsigma&0 \\ 0&0&0\end{array}  \right).
 \end{align*}

\end{proposition}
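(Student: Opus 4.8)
The plan is to assemble the global local parametrix $M^{loc}(k)$---defined piecewise on $\mathrm{U}=\bigcup_{j\in\{a,b\}}\bigcup_{l=0}^{2}\mathrm{U}_{jl}$ by $M^{loc}(k)|_{\mathrm{U}_{jl}}=M_{jl}(k)$---out of the six individual local models, exactly as in the proof of Proposition \ref{promloc} for $\mathcal{T}_1$ but with the simpler configuration of only two base points $k_a,k_b$ on $\mathbb{R}$. First I would invoke Proposition \ref{p2mathch} together with the corollary above and its $M_{b0}$ counterpart, which supply the matched large-argument expansions $M_{a0}(\hat{k})=I+M_a^{(1)}(s)/\hat{k}+\mathcal{O}(\hat{k}^{-2})$ and $M_{b0}(\check{k})=I+M_b^{(1)}(s)/\check{k}+\mathcal{O}(\check{k}^{-2})$ as $\hat{k},\check{k}\to\infty$, where each $M_j^{(1)}(s)$ is built from the Painlev\'e II transcendent $v(s)$ and the phase $\varphi_j$.

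Next I would undo the scalings \eqref{2phatk} and \eqref{2psmb}. Since $\hat{k}=3^{2/3}t^{1/3}(k-k_a)$ and $\check{k}=3^{2/3}t^{1/3}(k-k_b)$, one has $1/\hat{k}=3^{-2/3}t^{-1/3}/(k-k_a)$, so that the leading corrections turn into $t^{-1/3}$ times a rational function of $k$ with simple poles at $k_a,k_b$; this produces both the global prefactor $t^{-1/3}$ and the scalar constant $3^{-2/3}$ recorded in \eqref{p2mlok}. The rotated models $M_{j1},M_{j2}$ on $\mathrm{U}_{j1},\mathrm{U}_{j2}$ are then generated from $M_{j0}$ through the symmetries \eqref{S1}: the relations $M(k)=\Gamma_3\overline{M(\omega\bar{k})}\Gamma_3$ and $M(k)=\Gamma_2\overline{M(\omega^2\bar{k})}\Gamma_2$ carry the real base point $k_j$ to $\omega k_j$ and $\omega^2 k_j$ and, after the identical scaling and conjugation, convert the $1/(k-k_j)$ term into $\omega\,\Gamma_3\overline{M_j^{(1)}(s)}\Gamma_3/(k-\omega k_j)$ and $\omega^2\,\Gamma_2\overline{M_j^{(1)}(s)}\Gamma_2/(k-\omega^2 k_j)$, the extra factors $\omega,\omega^2$ coming from the differential of the rotation acting on the simple pole. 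Summing over the six disks reproduces exactly the meromorphic function $M^{loc}_1(k,s)$ in \eqref{p2mlok}.

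Finally I would track the error. On $\partial\mathrm{U}$ the scaled variables satisfy $|\hat{k}|,|\check{k}|\sim c_0\,3^{2/3}t^{1/3}\sim t^{\delta_2}$, so the discarded $\mathcal{O}(\hat{k}^{-2})$ tails contribute at order $t^{-2/3+2\delta_2}$, while the matching remainders $\mathcal{O}(t^{-1/3+4\delta_2})$ from Proposition \ref{p2mathch}, once multiplied by the extracted $t^{-1/3}$, also lie below the stated threshold; together they yield the overall error $\mathcal{O}(t^{-2/3+4\delta_2})$. \emph{The main obstacle} is the uniformity bookkeeping: one must verify that these bounds hold uniformly over all of $\mathrm{U}$ and that the pairwise disjointness of the disks (ensured by the choice of $c_0$ and $\varrho$) prevents any cross-contamination among the six local models, so that the single rational expression $M^{loc}_1$ represents $M^{loc}-I$ to leading order. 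Since this merely transcribes the already-verified argument for $\mathcal{T}_1$ in Proposition \ref{promloc} to the two-point configuration, the remaining steps are routine.
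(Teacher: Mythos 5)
Your overall strategy---assembling $M^{loc}$ from the six local models, pulling the large-$\hat{k}$ expansions of $M_{a0}$, $M_{b0}$ back through the scalings \eqref{2phatk}, \eqref{2psmb}, and generating the rotated contributions via the symmetries \eqref{S1} (your factor $\omega=1/\omega^{2}$ in front of $\Gamma_3\overline{M_j^{(1)}}\Gamma_3$ is the right computation)---is exactly the assembly by which the paper arrives at this proposition, which it states without a separate proof. One structural point, however, you assert rather than check: undoing the scaling $\hat{k}=3^{2/3}t^{1/3}(k-k_a)$ in $M_{a0}(\hat{k})=I+M_{a1}^{(1)}(s)/\hat{k}+\cdots$ produces the coefficient $+3^{-2/3}M_a^{(1)}(s)/(k-k_a)$, i.e.\ your construction yields $+3^{-2/3}$, whereas the statement carries $-3^{-2/3}$; your claim that the sum "reproduces exactly" \eqref{p2mlok} is therefore not literally true. (The plus sign is almost certainly the correct one: the $\mathcal{T}_1$ analogue, Proposition \ref{promloc}, has it, and it is the sign that makes \eqref{p2p1} in Proposition \ref{p2pe} come out with $+3^{-2/3}$ under the clockwise orientation of $\partial\mathrm{U}$; but a proof must confront the discrepancy rather than paper over it.)

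The more serious gap is in the error tracking, where both of your quantitative claims are wrong. First, by your own (correct) observation $|\hat{k}|\sim c_0 3^{2/3}t^{1/3}\sim t^{\delta_2}$ on $\partial\mathrm{U}$, the discarded tail is $\mathcal{O}(\hat{k}^{-2})=\mathcal{O}(t^{-2\delta_2})$, not $\mathcal{O}(t^{-2/3+2\delta_2})$; and since $\delta_2<1/12<1/9$, one has $t^{-2\delta_2}\gg t^{-2/3+4\delta_2}$, so this term alone exceeds the stated error. Second, the remainder in Proposition \ref{p2mathch} is an \emph{additive, uniform} $\mathcal{O}(t^{-1/3+4\delta_2})$ error in $M_{a0}(\hat{k})$ itself; it is not "multiplied by the extracted $t^{-1/3}$," so it enters $M^{loc}$ at size $t^{-1/3+4\delta_2}\gg t^{-2/3+4\delta_2}$ as well. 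Consequently, with correct arithmetic your ingredients do not deliver \eqref{p2mlok} as a uniform bound on $\partial\mathrm{U}$: the expansion is only valid in the integrated sense in which it is actually used in the proof of Proposition \ref{p2pe}, namely under $\frac{1}{2\pi\mathrm{i}}\oint_{\partial\mathrm{U}}(\varsigma-e^{\frac{\pi}{6}\mathrm{i}})^{-1}(\cdot)\,d\varsigma$, where the $\mathcal{O}(\hat{k}^{-2})$ piece is meromorphic with a double pole at $k_j$ and contributes only $\mathcal{O}(t^{-2/3})$ by residues, while the uniform matching error gains the length factor $|\partial\mathrm{U}|\sim t^{\delta_2-1/3}$ and contributes $\mathcal{O}(t^{-2/3+5\delta_2})$ (a looseness of $t^{\delta_2}$ that the paper's own bookkeeping shares). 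A complete proof must either restate the proposition in this integrated form or strengthen the matching estimate; declaring the remaining steps "routine" is precisely where your argument, as written, fails.
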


\subsubsection{Small-norm RH problem} \label{p2sne}

From the decomposition  \eqref{p2pdesM2RHP},
we obtain  the following RH problem.

\begin{RHP}\label{p2E}
Find a $3\times 3$ matrix-valued function $E(k):=E(k;y,t)$ such that
\begin{itemize}
\item $E(k)$ is analytic in $\mathbb{C}\setminus \Sigma^{E}$, where $\Sigma^{E} :=\Big(\Sigma^{(3)}\setminus \mathrm{U} \Big)\cup\partial \mathrm{U}$. See Figure \ref{p2fe}.
\item For $k \in \Sigma^E$, $E_+(k)=E_-(k)V^{E}(k)$ with the jump matrix
\begin{align}\label{p2VE}
V^{E}(k)=\left\{
\begin{aligned}
   &V^{(3)}(k), \quad k\in\Sigma^{E}\setminus \mathrm{U},\\
   &M^{loc}(k), \quad k\in\partial\mathrm{U}.
\end{aligned}
\right.
\end{align}
\item As $k\rightarrow \infty$ in $\mathbb{C}\setminus \Sigma^{E}$, $E(k)=I+\mathcal{O}(k^{-1})$.

\end{itemize}
\end{RHP}

\begin{figure}[http]
	\centering
	\begin{tikzpicture}[scale=1]
\draw[dotted](-4,0)--(4,0);
\node    at (4.3,0) {\footnotesize$\mathbb{R}$};
\node    at (2.2,3.8) {\footnotesize$\omega\mathbb{R}$};
\node    at (-2.2,3.8) {\footnotesize$\omega^2\mathbb{R}$};

\draw [  ] (-2,0)--(-1.5,0.866);
\draw [  ](2,0)-- (1.5,0.866);
\draw[-latex,  ] (-2,0)--(-1.6,0.6928);
\draw[-latex,  ] (1.5,0.866)--(1.85,0.2598);			
\draw [  ] (0,0)--(-1.5,0.866);
\draw [  ] (0,0)--(1.5,0.866);
\draw[-latex,  ] (-1.5,0.866)--(-0.65,0.3753);
\draw[-latex,  ] (0, 0)--(1, 0.5773);

\draw [  ](-2,0) -- (-1.5,-0.866);
\draw [  ](2,0)-- (1.5,-0.866);
\draw[-latex,  ](-2,0)--(-1.6,-0.6928);
\draw[-latex,  ](1.5,-0.866)--(1.85,-0.2598);	
\draw [  ](0,0) -- (-1.5,-0.866);
\draw [  ](0,0)-- (1.5,-0.866);
\draw[-latex,  ](-1.5,-0.866)--(-0.65,-0.3753);
\draw[-latex,  ](0,0)--(1,-0.5773);	

\draw[  ](-1.5,0.866)--(-1.5,0);
\draw[  ](-1.5,-0.866)--(-1.5,0);
\draw[-latex,  ](-1.5,0)--(-1.5,0.5);
\draw[-latex,  ](-1.5,0)--(-1.5,-0.5);

\draw[  ](1.5,0.866)--(1.5,0);
\draw[  ](1.5,-0.866)--(1.5,0);
\draw[-latex,  ](1.5,0)--(1.5,0.5);
\draw[-latex,  ](1.5,0)--(1.5,-0.5);

\draw [  ](-4,1) -- (-3,0);
\draw [-latex,  ](-4,1) -- (-3.5,0.5);
\draw [  ](-4,-1)--(-3,0);
\draw [-latex,  ](-4,-1) -- (-3.5,-0.5);
\draw [  ](4,1) -- (3,0);
\draw [-latex,  ](3,0) -- (3.5,0.5);
\draw [  ](4,-1) --(3,0);
\draw [-latex,  ](3,0) -- (3.5,-0.5);

\draw[  ](-3,0)--(-2,0);
\draw[-latex,  ](-3,0)--(-2.3,0);
\draw[  ](3,0)--(2,0);
\draw[-latex,  ](2,0)--(2.5,0);

\draw[dotted,rotate=60](-4,0)--(4,0);

\draw [  , rotate=60] (-2,0)--(-1.5,0.866);
\draw [  , rotate=60](2,0)-- (1.5,0.866);
\draw[-latex,  , rotate=60] (-2,0)--(-1.6,0.6928);
\draw[-latex,  , rotate=60] (1.5,0.866)--(1.85,0.2598);			
\draw [  , rotate=60] (0,0)--(-1.5,0.866);
\draw [  , rotate=60] (0,0)--(1.5,0.866);
\draw[-latex,  , rotate=60] (-1.5,0.866)--(-0.65,0.3753);
\draw[-latex,  , rotate=60] (0, 0)--(1, 0.5773);

\draw [  , rotate=60](-2,0) -- (-1.5,-0.866);
\draw [  , rotate=60](2,0)-- (1.5,-0.866);
\draw[-latex,  , rotate=60](-2,0)--(-1.6,-0.6928);
\draw[-latex,  , rotate=60](1.5,-0.866)--(1.85,-0.2598);	
\draw [  , rotate=60](0,0) -- (-1.5,-0.866);
\draw [  , rotate=60](0,0)-- (1.5,-0.866);
\draw[-latex,  , rotate=60](-1.5,-0.866)--(-0.65,-0.3753);
\draw[-latex,  , rotate=60](0,0)--(1,-0.5773);

\draw[  , rotate=60](-1.5,0.866)--(-1.5,0);
\draw[  , rotate=60](-1.5,-0.866)--(-1.5,0);
\draw[-latex,  , rotate=60](-1.5,0)--(-1.5,0.5);
\draw[-latex,  , rotate=60](-1.5,0)--(-1.5,-0.5);

\draw[  , rotate=60](1.5,0.866)--(1.5,0);
\draw[  , rotate=60](1.5,-0.866)--(1.5,0);
\draw[-latex,  , rotate=60](1.5,0)--(1.5,0.5);
\draw[-latex,  , rotate=60](1.5,0)--(1.5,-0.5);

\draw [  , rotate=60](-4,1) -- (-3,0);
\draw [-latex,  , rotate=60](-4,1) -- (-3.5,0.5);
\draw [  , rotate=60](-4,-1)--(-3,0);
\draw [-latex,  , rotate=60](-4,-1) -- (-3.5,-0.5);
\draw [  , rotate=60](4,1) -- (3,0);
\draw [-latex,  , rotate=60](3,0) -- (3.5,0.5);
\draw [  , rotate=60](4,-1) --(3,0);
\draw [-latex,  , rotate=60](3,0) -- (3.5,-0.5);

\draw[  , rotate=60](-3,0)--(-2,0);
\draw[-latex,  , rotate=60](-3,0)--(-2.3,0);
\draw[  , rotate=60](3,0)--(2,0);
\draw[-latex,  , rotate=60](2,0)--(2.5,0);

\draw[dotted,rotate=120](-4,0)--(4,0);

\draw [  ,rotate=120] (-2,0)--(-1.5,0.866);
\draw [  ,rotate=120](2,0)-- (1.5,0.866);
\draw[-latex,  ,rotate=120] (-2,0)--(-1.6,0.6928);
\draw[-latex,  ,rotate=120] (1.5,0.866)--(1.85,0.2598);			
\draw [  ,rotate=120] (0,0)--(-1.5,0.866);
\draw [  ,rotate=120] (0,0)--(1.5,0.866);
\draw[-latex,  ,rotate=120] (-1.5,0.866)--(-0.65,0.3753);

\draw [  ,rotate=120](-2,0) -- (-1.5,-0.866);
\draw [  ,rotate=120](2,0)-- (1.5,-0.866);
\draw[-latex,  ,rotate=120](-2,0)--(-1.6,-0.6928);
\draw[-latex,  ,rotate=120](1.5,-0.866)--(1.85,-0.2598);	
\draw [  ,rotate=120](0,0) -- (-1.5,-0.866);
\draw [  ,rotate=120](0,0)-- (1.5,-0.866);
\draw[-latex,  ,rotate=120](0,0)--(1,-0.5773);	

\draw[  ,rotate=120](-1.5,0.866)--(-1.5,0);
\draw[  ,rotate=120](-1.5,-0.866)--(-1.5,0);
\draw[-latex,  ,rotate=120](-1.5,0)--(-1.5,0.5);
\draw[-latex,  ,rotate=120](-1.5,0)--(-1.5,-0.5);

\draw[  ,rotate=120](1.5,0.866)--(1.5,0);
\draw[  ,rotate=120](1.5,-0.866)--(1.5,0);
\draw[-latex,  ,rotate=120](1.5,0)--(1.5,0.5);
\draw[-latex,  ,rotate=120](1.5,0)--(1.5,-0.5);

\draw [  ,rotate=120](-4,1) -- (-3,0);
\draw [-latex,  ,rotate=120](-4,1) -- (-3.5,0.5);
\draw [  ,rotate=120](-4,-1)--(-3,0);
\draw [-latex,  ,rotate=120](-4,-1) -- (-3.5,-0.5);
\draw [  ,rotate=120](4,1) -- (3,0);
\draw [-latex,  ,rotate=120](3,0) -- (3.5,0.5);
\draw [  ,rotate=120](4,-1) --(3,0);
\draw [-latex,  ,rotate=120](3,0) -- (3.5,-0.5);

\draw[  ,rotate=120](-3,0)--(-2,0);
\draw[-latex,  ,rotate=120](-3,0)--(-2.3,0);
\draw[  ,rotate=120](3,0)--(2,0);
\draw[-latex,  ,rotate=120](2,0)--(2.5,0);

		\filldraw[fill=white, draw=PineGreen, thick] (2.5,0) circle (0.6cm);
		\filldraw[fill=white, draw=PineGreen, thick] (-2.5,0) circle (0.6cm);
		\draw[-latex,PineGreen ] (2.4,0.6)--(2.6,0.6);
		\draw[-latex,PineGreen ] (-2.6,0.6)--(-2.4,0.6);
		
		\filldraw[fill=white, draw=PineGreen, thick,rotate=60] (2.5,0) circle (0.6cm);
		\filldraw[fill=white, draw=PineGreen, thick,rotate=60] (-2.5,0) circle (0.6cm);
		\draw[-latex,PineGreen ,rotate=60] (2.4,0.6)--(2.6,0.6);
		\draw[-latex,PineGreen ,rotate=60] (-2.6,0.6)--(-2.4,0.6);
		
		\filldraw[fill=white, draw=PineGreen, thick,rotate=120] (2.5,0) circle (0.6cm);
		\filldraw[fill=white, draw=PineGreen, thick,rotate=120] (-2.5,0) circle (0.6cm);
		\draw[-latex,PineGreen ,rotate=120] (2.4,0.6)--(2.6,0.6);
		\draw[-latex,PineGreen ,rotate=120] (-2.6,0.6)--(-2.4,0.6);

	\end{tikzpicture}
	\caption{\footnotesize  The jump contour $ \Sigma^{E}$ of the RH problem \ref{p2E} for the error function $E(k)$.}
	\label{p2fe}
\end{figure}
A simple calculation shows that for $1 \le p \le \infty$,
\begin{equation}
  \| V^{E} -I \|_{L^p(\Sigma^{E})} = \begin{cases}
  \mathcal{O}(e^{-ct^{3\delta_2}}), \quad k \in \Sigma^{E} \setminus \mathrm{U},\\
  \mathcal{O}(t^{-\kappa_p}), \quad k \in \partial  \mathrm{U},
  \end{cases}
\end{equation}
where $c$ is a positive constant, and $\kappa_p = \frac{p-1}{p} \delta_2 +\frac{1}{3p}$.
According to \cite{BC1984} again,  we have
\begin{equation}\label{p2e2}
E(k)=I+\frac{1}{2\pi \mathrm{i}}\int_{\Sigma^{E}}\dfrac{\left( I+\varpi(\varsigma)\right) (V^{E}(\varsigma)-I)}{\varsigma-k}d\varsigma,
\end{equation}
where $\varpi \in L^2(\Sigma^{E})$ is the unique solution of \eqref{p1en}.
Moreover,
\begin{equation}
\| C_{E} \|_{L^2(\Sigma^{E})} \lesssim t^{-\delta_2},
\end{equation}
which implies $\varpi$ exists uniquely with
\begin{align}\label{p2estce}
	\| C_{E}^j I\|_{L^2(\Sigma^{E})} \lesssim t^{-\frac{1}{6}-j\delta_2+\frac{1}{2}\delta_2}, \quad
	\| \varpi -  \sum_{j=1}^4 C_{E}^j I\|_{L^2(\Sigma^{E})} \lesssim t^{-\frac{1}{6}-\frac{9}{2}\delta_2 }.
\end{align}
Then we  can evaluate the value $E(k)$ at $k = e^{\frac{\pi}{6} \mathrm{i}}$.
\begin{proposition}\label{p2pe}
As $t\to\infty$,
\begin{equation}\label{p2ee}
E(e^{\frac{\pi}{6}\mathrm{i}}) = I + t^{-1/3}E_2(e^{\frac{\pi}{6}\mathrm{i}})  + \mathcal{O}(t^{-(2/3+4\delta_2)}),
\end{equation}
where $E_2(e^{\frac{\pi}{6}\mathrm{i}})$ is the value of the function
\begin{align}
 &E_2(k)= 3^{-\frac{2}{3}}  \sum\limits_{j=a,b }  \left( \frac{M_{j }^{(1)}(s)} { k-   k_j  } +     \frac{\omega \Gamma_3  \overline{M_{j }^{(1)}(s)}  \Gamma_3 }{ k- \omega  k_j } +
 \frac{   \omega ^2 \Gamma_2  \overline{M_{j }^{(1)}(s)}  \Gamma_2}{ k- \omega^2 k_j }  \right).\label{p2p1}
 \end{align}
\end{proposition}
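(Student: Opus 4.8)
The plan is to reproduce, line for line, the argument already used for the zone $\mathcal{T}_1$ in the proof of Proposition \ref{p1este1}, since the present statement is its exact counterpart with the local data of Proposition \ref{p2promlo} and the small-norm bounds \eqref{p2estce} replacing their $\mathcal{T}_1$ analogues. First I would start from the Beals--Coifman representation \eqref{p2e2} and evaluate it at the distinguished point $k=e^{\frac{\pi}{6}\mathrm{i}}$,
\begin{equation*}
E(e^{\frac{\pi}{6}\mathrm{i}}) = I + \frac{1}{2\pi\mathrm{i}}\int_{\Sigma^{E}}\frac{\bigl(I+\varpi(\varsigma)\bigr)\bigl(V^{E}(\varsigma)-I\bigr)}{\varsigma-e^{\frac{\pi}{6}\mathrm{i}}}\,d\varsigma .
\end{equation*}
Since $e^{\frac{\pi}{6}\mathrm{i}}$ lies in the interior of the sector $D_1$ it keeps a fixed positive distance from each disk of $\mathrm{U}$ and from $\Sigma^{(3)}$, so the Cauchy kernel is uniformly bounded on $\Sigma^{E}$ throughout.

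Next I would split $\Sigma^{E}=(\Sigma^{(3)}\setminus\mathrm{U})\cup\partial\mathrm{U}$ according to \eqref{p2VE}. On $\Sigma^{(3)}\setminus\mathrm{U}$ one has $V^{E}-I=\mathcal{O}(e^{-ct^{3\delta_2}})$, so that part contributes an exponentially small amount. On $\partial\mathrm{U}$ the jump is $V^{E}=M^{loc}$, and I would show that the $\varpi$-weighted piece $\frac{1}{2\pi\mathrm{i}}\int_{\partial\mathrm{U}}\frac{\varpi(\varsigma)(V^{E}(\varsigma)-I)}{\varsigma-e^{\frac{\pi}{6}\mathrm{i}}}\,d\varsigma$ is $\mathcal{O}(t^{-1/3-5\delta_2})$, exactly as for $\mathcal{T}_1$, by inserting the Neumann-type decomposition $\varpi=\sum_{j=1}^{4}C_E^{j}I+(1-C_E)^{-1}(C_E^{5}I)$ coming from \eqref{p1en} and invoking the bounds \eqref{p2estce} together with $\|V^{E}-I\|_{L^2(\partial\mathrm{U})}\lesssim t^{-\kappa_2}$. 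These two reductions collapse the formula to
\begin{equation*}
E(e^{\frac{\pi}{6}\mathrm{i}}) = I + \frac{1}{2\pi\mathrm{i}}\oint_{\partial\mathrm{U}}\frac{M^{loc}(\varsigma)-I}{\varsigma-e^{\frac{\pi}{6}\mathrm{i}}}\,d\varsigma + \mathcal{O}(t^{-1/3-5\delta_2}).
\end{equation*}

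Finally I would substitute $M^{loc}(\varsigma)=I+t^{-1/3}M^{loc}_1(\varsigma,s)+\mathcal{O}(t^{-2/3+4\delta_2})$ from Proposition \ref{p2promlo}. The matrix $M^{loc}_1(\cdot,s)$ is rational, decays like $\mathcal{O}(\varsigma^{-1})$ at infinity, and its only singularities are simple poles at $k_a,k_b$ and their $\omega$-rotates, all lying inside $\mathrm{U}$; hence the contour integral is evaluated by residues at these interior poles, and matching the resulting rational function, including the prefactor $3^{-2/3}$ and the overall sign fixed by the explicit residues and by the orientation of $\partial\mathrm{U}$ in Figure \ref{p2fe}, reproduces exactly $t^{-1/3}E_2(e^{\frac{\pi}{6}\mathrm{i}})$ with $E_2$ as in \eqref{p2p1}. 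The $\mathcal{O}(t^{-2/3+4\delta_2})$ remainder in $M^{loc}$ contributes at most the same order because the kernel is bounded and $|\partial\mathrm{U}|\to0$, and collecting the two error sources gives $\mathcal{O}(t^{-1/3-5\delta_2})+\mathcal{O}(t^{-2/3+4\delta_2})=\mathcal{O}(t^{-2/3+4\delta_2})$, the absorption using precisely $\delta_2>\frac{1}{27}$. The hard part will be the sharp control of the $\varpi$-weighted integral: a direct Cauchy--Schwarz bound only delivers $\mathcal{O}(t^{-1/3-\delta_2})$, which is too weak to be swallowed by $\mathcal{O}(t^{-2/3+4\delta_2})$ when $\delta_2$ is close to $\frac{1}{27}$, so one must use the iterated structure of $(1-C_E)^{-1}$ through \eqref{p2estce} to extract the additional powers of $t^{-\delta_2}$.
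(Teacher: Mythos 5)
Your proposal follows essentially the same route as the paper's own (much terser) proof: the paper likewise evaluates the Beals--Coifman representation \eqref{p2e2} at $k=e^{\frac{\pi}{6}\mathrm{i}}$, uses \eqref{p2VE} together with the bounds \eqref{p2estce} to reduce everything to $I+\frac{1}{2\pi\mathrm{i}}\oint_{\partial\mathrm{U}}\frac{M^{loc}(\varsigma)-I}{\varsigma-e^{\frac{\pi}{6}\mathrm{i}}}\,d\varsigma+\mathcal{O}(t^{-1/3-5\delta_2})$, and then inserts the expansion \eqref{p2mlok} and evaluates the contour integral by residues, where the orientation of $\partial\mathrm{U}$ produces exactly the sign flip between the prefactor $-3^{-2/3}$ in $M^{loc}_1$ and $+3^{-2/3}$ in $E_2$, as you note. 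Your closing caveat --- that norm bounds alone leave the low-order $\varpi$-weighted terms at $\mathcal{O}(t^{-1/3-\delta_2})$ and that sharper (e.g.\ residue/analyticity) arguments are needed near $\delta_2=\frac{1}{27}$ --- is a fair observation, but it applies equally to the paper's own one-line justification, so it marks added care rather than a departure from the paper's method.
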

\begin{proof}

Using  \eqref{p2mlok}, \eqref{p2VE}, and \eqref{p2estce},  it follows that
\begin{align*}
&E(e^{\frac{\pi}{6}\mathrm{i}}) = I + \frac{1}{2\pi \mathrm{i}} \oint_{\partial U} \frac{M^{loc}(\varsigma)-I}{\varsigma -e^{\frac{\pi}{6}\mathrm{i}} } d\varsigma + \mathcal{O}(t^{-(1/3+5\delta_2)}),
\end{align*}
which together with (\ref{p2mlok}) yields (\ref{p2ee}).

\end{proof}

\subsection{Asymptotic analysis on a  pure  $\bar\partial$-problem}

Define
\begin{equation}\label{p2m2m3mrhp}
m^{(4)}(k) : = m^{(3)}(k) M^{rhp}(k)^{-1},
\end{equation}
which satisfies the following pure $\bar{\partial}$-problem.
\begin{Dbar}\label{p2dbarproblem}
 Find a row vector-valued function  $m^{(4)}(k):=m^{(4)}(k;y,t)$ such that
\begin{itemize}
\item $m^{(4)}(k)$ is continuous  in $\mathbb{C}$.
\item  As $k \to \infty$ in $\mathbb{C}$, $m^{(4)}(k)=\begin{pmatrix} 1& 1& 1 \end{pmatrix}+\mathcal{O}(k^{-1})$.
\item $m^{(4)}(k)$ satisfies the $\bar\partial$-equation
\begin{equation}\label{p2Dbar1}
\bar{\partial}m^{(4)}(k)=m^{(4)}(k)W^{(4)}(k),\ \ k\in \mathbb{C}
\end{equation}
with
\begin{equation}\label{p2w4}
W^{(4)}(k)=M^{rhp}(k)\bar{\partial}\mathcal{R}^{(2)}(k)M^{rhp}(k)^{-1},
\end{equation}
where $\bar{\partial}\mathcal{R}^{(2)}(k)$ is defined by \eqref{p2dbarr2}.
\end{itemize}
\end{Dbar}
In a similar way to Subsection \ref{p1pdbar}, we can  obtain the following estimate.
\begin{proposition}
	There exists a large time $T>0$ such that when $t>T$,  the pure  $\bar\partial$-problem  \ref{p2dbarproblem} has a  unique   solution $m^{(4)}(k)$  with  the following estimate
	\begin{equation}\label{p2estm3}
		\left|  m^{(4)}(e^{\frac{\pi}{6}\mathrm{i}}) - \begin{pmatrix} 1& 1& 1 \end{pmatrix}        \right| \lesssim t^{-2/3}.
	\end{equation}
\end{proposition}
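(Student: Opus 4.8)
The plan is to mirror the argument of Subsection \ref{p1pdbar}, feeding in the geometry of the $\mathcal{T}_2$ signature table (Figures \ref{figured}--\ref{figuref}) in place of that of $\mathcal{T}_1$. First I would recast the pure $\bar\partial$-problem \ref{p2dbarproblem} as the integral equation
\begin{equation}
m^{(4)}(k)=\begin{pmatrix}1&1&1\end{pmatrix}+\frac{1}{\pi}\iint_{\mathbb{C}}\frac{m^{(4)}(\varsigma)W^{(4)}(\varsigma)}{\varsigma-k}\,dA(\varsigma),\nonumber
\end{equation}
i.e. $(I-S)m^{(4)}=\begin{pmatrix}1&1&1\end{pmatrix}$ with $S[f](k)=\frac{1}{\pi}\iint_{\mathbb{C}}\frac{f(\varsigma)W^{(4)}(\varsigma)}{\varsigma-k}\,dA(\varsigma)$ and $W^{(4)}$ given by \eqref{p2w4}. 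Existence and uniqueness then reduce to showing that $(I-S)^{-1}$ is bounded, which follows once $\|S\|_{L^\infty\to L^\infty}\lesssim t^{-1/3}$ for $t$ large.

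To establish the operator bound I would first record that $M^{rhp}(k)$ is uniformly bounded in $k$ and $t$: away from $\mathrm{U}$ it equals the small-norm solution $E(k)=I+o(1)$, while inside $\mathrm{U}$ it is $E(k)M^{loc}(k)$ with $M^{loc}$ controlled by Proposition \ref{p2promlo}. Hence $W^{(4)}=M^{rhp}\bar\partial\mathcal{R}^{(2)}(M^{rhp})^{-1}$ inherits both the size estimates of $\bar\partial R_j$ (the $\mathcal{T}_2$ analogues of \eqref{p1est0}--\eqref{p1est2}) and the oscillatory factor $e^{\pm\mathrm{i}t\theta_{12}}$ appearing in \eqref{p2dbarr2}. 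The essential geometric input is the $\mathcal{T}_2$ counterpart of Lemma \ref{p2im2}, giving on each sector $\Omega_j$ a bound of the form $\im\theta_{12}(k)\le -c_j|\re k-k_j|^2|\im k|$ for $|k|\le 2$ and $\le -c_j|\im k|$ for $|k|>2$, with the reversed sign on $\Omega_j^*$. Splitting the double integral over $\Omega_j$ as in Lemma \ref{lp2ests}, writing $\varsigma=u+k_j+v\mathrm{i}$, and combining the pointwise decay $e^{-ctu^2v}$ (resp. $e^{-ctv}$) with the H\"older estimates $\big\||\varsigma-k|^{-1}\big\|_{L^q_u}\lesssim|v-y|^{-1+1/q}$ and $\big\|e^{-ctu^2v}\big\|_{L^p_u}\lesssim(tv)^{-1/(2p)}$ would then yield $\|S\|_{L^\infty\to L^\infty}\lesssim t^{-1/3}$, so $(I-S)^{-1}$ exists.

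For the value estimate I would evaluate the representation at $k=e^{\frac{\pi}{6}\mathrm{i}}$,
\begin{equation}
m^{(4)}(e^{\frac{\pi}{6}\mathrm{i}})-\begin{pmatrix}1&1&1\end{pmatrix}=\frac{1}{\pi}\iint_{\mathbb{C}}\frac{m^{(4)}(\varsigma)W^{(4)}(\varsigma)}{\varsigma-e^{\frac{\pi}{6}\mathrm{i}}}\,dA(\varsigma),\nonumber
\end{equation}
noting that $e^{\frac{\pi}{6}\mathrm{i}}\notin\overline{\Omega}$ keeps the kernel bounded on the support of $W^{(4)}$. Reducing to a sectorial integral over $\Omega_j$ and splitting into the three pieces $I_4,I_5,I_6$ of Subsection \ref{p1pdbar}, the dominant contribution comes from $\Omega_j\cap\{|k|\le2\}$: using boundedness of $|\varsigma-e^{\frac{\pi}{6}\mathrm{i}}|^{-1}$ there together with $\iint_{\Omega_j\cap\{|k|\le2\}}e^{-ctu^2v}\,dA(\varsigma)\lesssim t^{-2/3}$ gives the claimed $t^{-2/3}$ bound, while the pieces with $|k|>2$ decay like $t^{-1}$, exactly as in the proof of \eqref{p1estm3}.

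The main obstacle is the sharp signature estimate for $\im\theta_{12}$ on $\mathcal{T}_2$ near the coalescing saddle points $k_a=1$, $k_b=-1$ (and their $\omega$-rotations), since the four real phase points merge pairwise as $\hat{\xi}\to3^-$ and one must verify that the fixed opening angle $\varphi$ keeps every sector $\Omega_j$ strictly inside its decay region \emph{uniformly} in this limit; this is precisely what the $\mathcal{T}_2$ analogue of Lemma \ref{p2im2} supplies, and once it is in place the remaining integral estimates are routine adaptations of Lemma \ref{lp2ests}.
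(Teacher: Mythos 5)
Your proposal is correct and follows essentially the same route as the paper, which proves this proposition simply by adapting the argument of Subsection \ref{p1pdbar} (the integral-equation formulation with the Cauchy--Green operator $S$, the operator bound $\|S\|_{L^\infty\to L^\infty}\lesssim t^{-1/3}$ via the signature estimates of Lemma \ref{p2im2} and the H\"older splitting of Lemma \ref{lp2ests}, and the evaluation at $k=e^{\frac{\pi}{6}\mathrm{i}}$ split into the pieces $I_4$, $I_5$, $I_6$) to the $\mathcal{T}_2$ geometry. Your explicit attention to the uniformity of the sectorial decay estimates as the saddle points coalesce at $k_a=1$, $k_b=-1$ is exactly the ingredient the paper implicitly relies on when it invokes the analogy.
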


\subsection{Proof of Theorem \ref{th1} for $\mathcal{T}_{2}$}\label{proof2}

Inverting the sequence of transformations \eqref{defM1}, \eqref{defm2}, \eqref{p2m1m2}, \eqref{p2pdesM2RHP}, and  \eqref{p2m2m3mrhp}, the solution of RH problem  \ref{rhpvm} is given by
\begin{equation} \label{p2costm}
m(k)=m^{(4)}(k)E(k)\mathcal{R}^{(2)}(k)^{-1}T(k)^{-1}, \quad k \in \mathbb{C}\setminus \mathrm{U}.
\end{equation}
Taking $k =e^{\frac{\pi}{6}\mathrm{i}}$ in \eqref{p2costm}, and using \eqref{p2ee} and \eqref{p2estm3}, we have
\begin{align}
m(e^{\frac{\pi}{6}\mathrm{i}}) &= \begin{pmatrix} 1&1&1 \end{pmatrix} \left( I + t^{-1/3}E_2(e^{\frac{\pi}{6}\mathrm{i}})   \right)T(e^{\frac{\pi}{6}\mathrm{i}})^{-1} +\mathcal{O}(t^{-2/3+4\delta_2 }),
\ t\rightarrow\infty. \nonumber
\end{align}

In this way  the  solution (\ref{ltp2}) of  the  DP equation \eqref{DP}   can be recovered by the reconstruction formula  \eqref{rescon}.
Using  $y/t-x/t=\mathcal{O}(t^{-1})$, we obtain
the asymptotic behavior in the second transition zone.
Therefore,  the proof   of Theorem \ref{th1} for $\mathcal{T}_{2}$ is completed.

\appendix

\section{Proof of  the Non-transition  Zone $\mathcal{T}_{3}$}\label{T0}

 In this appendix, we show that there is no transition zone near  the critical line $\hat{\xi}=0$
between  $\mathcal{Z}_1$ and $\mathcal{Z}_2$  (See Figure \ref{figa}-\ref{figc}).
That is,   we    prove that  the asymptotics  in  $\mathcal{Z}_1$ match  that in $\mathcal{Z}_2$ as $t \to \infty$.

 We recall the following  asymptotics  for the solution $u(x,t)$ of the DP equation  in  two  regions  $\mathcal{Z}_1$ and $\mathcal{Z}_2$ obtained in  \cite{zx1}
\begin{align}\label{p3u}
u(x,t)=t^{-1/2} \frac{\partial}{\partial t} \left(\sum_{n=1}^3 \left( H(e^{\frac{\pi}{6}\mathrm{i}})_{n2} -H(e^{\frac{\pi}{6}\mathrm{i}})_{n1}\right) \right)+\mathcal{O}(t^{-3/4 }),
\end{align}
in which the function $H(k)$ is defined by
\begin{align}
&H(k)=-\frac{1}{2}\sum_{j=1}^{p(\hat{\xi})}F_j(k). \label{h}
\end{align}
In \eqref{h}, the  symbol  $p(\hat{\xi})$  denotes  the number of the saddle points on $\mathbb{R}$, and
$p(\hat{\xi})=8$ corresponds to  $\mathcal{Z}_1$ and   $p(\hat{\xi})=4$   to   $\mathcal{Z}_2$.  Moreover, $F_j(k)$ is  given in term of  the solution of a parabolic cylinder model
\begin{align}
	&F_j(k)=\frac{A_j(\hat{\xi})}{\sqrt{|\theta_{12}''(k_j)|}(k-k_j)}+\frac{\omega\Gamma_3\overline{A_j(\hat{\xi})}\Gamma_3}
	{\sqrt{|\theta_{12}''(\omega k_j)|}(k-\omega k_j)}
	+\frac{\omega^2\Gamma_2\overline{A_j(\hat{\xi})}\Gamma_2}{\sqrt{|\theta_{12}''(\omega^2k_j)|}(k-\omega^2k_j)},\label{fj}\\
&A_j(\hat{\xi})=\left(\begin{array}{ccc}
	0 & \tilde{\beta}^{(j)}_{12} &0\\
	\tilde{\beta}^{(j)}_{21} & 0 &0\\
	0&0&0
\end{array}\right), \ \tilde{\beta}_{12}^{(j)} =
\frac{\sqrt{2\pi}e^{\frac{\pi}{2}\nu(k_j)}e^{-\frac{\pi}{4}\mathrm{i}}}{\bar{r}_{k_j}\Gamma(\mathrm{i}\nu{k_j})}, \ \tilde{\beta}_{12}^{(j)}\tilde{\beta}_{21}^{(j)}=-\nu (k_j), \label{beta12}\\
&r_{k_j} = r(k_j)T_{12}^{(j)}(\hat{\xi})^2  e^{-2\mathrm{i} t \theta(k_j)} \zeta^{-2\mathrm{i} \eta(k_j) \nu (k_j) } e^{- \mathrm{i} \eta(k_j) \nu (k_j) \log (4 t \theta''(k_j)) \tilde{\eta}(k_j)}.
\end{align}
For more details, see Section  3.2 and Section  6.1 in \cite{zx1}.
 \begin{figure}[htbp]
	\centering
	
	\subfigure[ $-\frac{3}{8} <\hat{\xi}<0$]{\label{figa}
		\begin{minipage}[t]{0.32\linewidth}
			\centering
			\includegraphics[width=1.52in]{bf2.png}
		\end{minipage}
	}%
	\subfigure[$ \hat{\xi}=0$]{\label{figb}
		\begin{minipage}[t]{0.32\linewidth}
			\centering
			\includegraphics[width=1.52in]{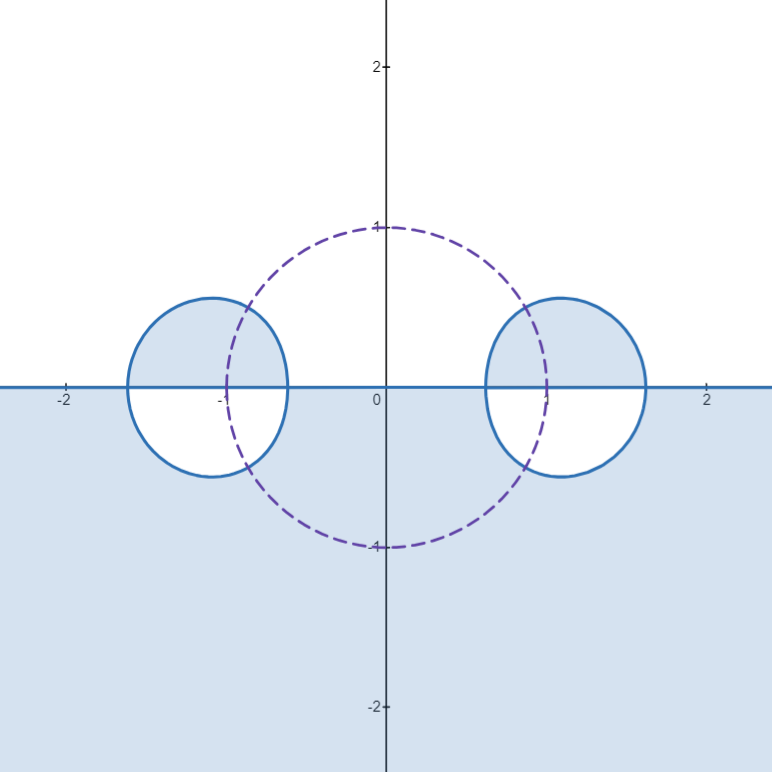}
		\end{minipage}
	}%
	\subfigure[$0\le\hat{\xi}<3$]{\label{figc}
		\begin{minipage}[t]{0.32\linewidth}
			\centering
			\includegraphics[width=1.52in]{bf4.png}
		\end{minipage}
	}%
	\caption{\footnotesize Signature table of ${\rm Im}\theta_{12}(k)$ with different $\hat{\xi}$:
		$\textbf{(a)}$ $-\frac{3}{8} <\hat{\xi}<0$,
		$\textbf{(b)}$ $\hat{\xi}=0$,
		$\textbf{(c)}$ $0\le\hat{\xi}<3$.
		The sectors where   ${\rm Im}\theta_{12}(k)<0$ and   ${\rm Im}\theta_{12}(k)>0$ are blue and white, respectively.
		Moreover, the  purple dotted line  stands for the unit circle.}
	\label{figthetc}
\end{figure}
 Further, we show that  the asymptotic result  in  $\mathcal{Z}_1$ with $p(\hat{\xi})=24$
  approaches  that in $\mathcal{Z}_2$ with $p(\hat{\xi})=12 $ as  $t \to \infty$.
 In fact, we  readily   find  that the  leading order  contribution  of   $\omega^l k_j,\ j=1,4,5,8, \ l=0,1,2,$ is  $ \mathcal{O}(t^{-1})$ which
 can be absorbed into the error term $ \mathcal{O}(t^{-3/4})$.

In $\mathcal{Z}_1$, there are $24$ saddle points $\omega^l k_j, \ j=1,\cdots,8, \ l=0,1,2$, on three contours ($\omega^l \mathbb{R}$, $l=0,1,2$); of these, $8$ saddle points
$k_j, \ j =1,\cdots,8$ are on $\mathbb{R}$.  In $ \mathcal{T}_3$, as $t \to \infty$,   we have   $ \hat{\xi}  \to 0^-$, and further
\begin{align}
&k_1 = \sqrt{3} |\hat{\xi}|^{-1/2} + \mathcal{O}(|\hat{\xi}|^{1/2})\to +\infty, \ k_4 = \frac{1}{\sqrt{3}} |\hat{\xi}|^{1/2}  + \mathcal{O}(|\hat{\xi}|^{3/2})\to 0^+, \label{p3k1}\\
&k_5 = -\sqrt{3} |\hat{\xi}|^{1/2} + \mathcal{O}(|\hat{\xi}|^{3/2})\to 0^-, \  k_8 = -\sqrt{3} |\hat{\xi}|^{-1/2} + \mathcal{O}(|\hat{\xi}|^{1/2})\to -\infty.\label{p3k8} 
\end{align}	
The corresponding limits points $\omega^l k_j, \ j=1,4,5,8,\ l=1,2$ exhibit similar properties.
\begin{align}
&k_2 = \hat k_1+\mathcal{O}(|\hat{\xi}| )\to \hat k_1, \ \
 k_3 = \hat k_2+\mathcal{O}(|\hat{\xi}| )\to \hat k_2, \\
&  k_6 = \hat k_3+\mathcal{O}(|\hat{\xi}| )\to \hat k_3, \ \
\ \  k_7 =   \hat k_4 +\mathcal{O}(|\hat{\xi}| )\to \hat k_4,
\end{align}	
where  $\hat k_1=-\hat k_4=\frac{\sqrt{5}+1}{2},  \ \hat k_2=-\hat k_3=\frac{\sqrt{5}-1}{2}$ are   four phase points as $\hat \xi=0$ in the case $\mathcal{Z}_2$.
The corresponding limits points $\omega^l  k_j, \ j=2,3,6,7,\ l=1,2$ exhibit similar properties.

Next, we demonstrate that the contributions   to the solution $u(x,t)$  from  $F_j(k)$
 near saddle points $\omega^l k_j,\ j=1,4,5,8, \ l=0,1,2,$  are  $\mathcal{O}(t^{-1})$.

Using \eqref{oritheta12} and \eqref{beta12},  we have  as  $\omega^l k_4, \omega^l k_5 \to 0, \omega^l k_1,  \omega^l k_8 \to \infty$,  for $l=0,1,2$,
	\begin{align}
	& |\theta_{12}''(\omega^lk_j)| \sim | k_j|^{-1}, \ \ \tilde{\beta}_{12}^{(j)},\  \tilde{\beta}_{21}^{(j)} \sim k_j,\ j=4,5, \label{the121}\\
	& |\theta_{12}''(\omega^lk_j)| \sim |k_j|, \ \ \tilde{\beta}_{12}^{(j)},\  \tilde{\beta}_{21}^{(j)} \sim  k_j^{-1},\ j=1,8,\label{the122}
	\end{align}
which together with (\ref{p3k1}) and (\ref{p3k8})   yields	
	\begin{align*}
	\frac{1}{\sqrt{|\theta_{12}''(\omega^lk_j)|}}  \left(\begin{array}{ccc}
	0 & \tilde{\beta}_{12}^{(j)} &0\\
	\tilde{\beta}_{21}^{(j)} & 0 &0\\
	0&0&0
	\end{array}\right) = \mathcal{O}( k_j^{3/2})  =\mathcal{O}(  t^{-1/2}), \ j=4,5,\\
	\frac{1}{\sqrt{|\theta_{12}''(\omega^lk_j)|}}  \left(\begin{array}{ccc}
	0 & \tilde{\beta}_{12}^{(j)} &0\\
	\tilde{\beta}_{21}^{(j)} & 0 &0\\
	0&0&0
	\end{array}\right) =\mathcal{O}( k_j^{-3/2} )=\mathcal{O}(  t^{-1/2}), \ j=1,8.
	\end{align*}
Meanwhile, saddle points $\omega^l k_j,\ j=2,3,6,7, \ l=0,1,2 $  approach to
saddle points $\omega^l \hat k_j,\ j=1,\cdots,4, \ l=0,1,2 $ in the case $\mathcal{Z}_2$.
Therefore, using (\ref{fj}),  the formula (\ref{h}) reduces to
	\begin{align}\label{Hp3}
H(k)= \sum_{j=1,2,3,4}\hat F_j(k) + \mathcal{O}(t^{-1/2}),
	\end{align}
where $\hat F_j(k)$ is  obtained from (\ref{fj}) by replacing $k_j$ with $\hat k_j$.

Finally, substituting  (\ref{Hp3})  into  \eqref{p3u} gives the asymptotic result in  $\mathcal{Z}_2$
\begin{align}
u(x,t)=t^{-1/2} \frac{\partial}{\partial t} \left(\sum_{n=1}^3
\left( H(e^{\frac{\pi}{6}\mathrm{i}})_{n2} -H(e^{\frac{\pi}{6}\mathrm{i}})_{n1}\right) \right)+\mathcal{O}(t^{-3/4 }), \label{p3u2}
\end{align}
with
	\begin{align}
H(k)= \sum_{j=1,2,3,4}\hat F_j(k).\label{h1}
	\end{align}
Two   results  (\ref{p3u})-(\ref{h})  and (\ref{p3u2})-(\ref{h1})   imply  that  the asymptotics in  $\mathcal{Z}_1$
goes  to that in $\mathcal{Z}_2$ near $\hat{\xi}=0$ as $t \to \infty$.
	 Therefore,  $\mathcal{T}_3$   is  not    a   transition zone.

\section{Modified Painlev\'{e} \uppercase\expandafter{\romannumeral2} RH Model} \label{appx}
The   Painlev\'{e} \uppercase\expandafter{\romannumeral2} equation takes the form
\begin{equation}\label{p23}
	 v_{ss} = 2 v^3 +s v, \quad s \in \mathbb{R},
\end{equation}
which is generally related to a $2 \times 2$ matrix-valued RH problem \cite{pa2, Charlier2020, Deiftzhoup2, FokasAblop2}.
 Here we give a modified $3 \times 3$ matrix-valued RH problem  associated with the  Painlev\'{e} \uppercase\expandafter{\romannumeral2} equation  \eqref{p23} as follows.

Denote $\Sigma^\mathrm{P} = \bigcup_{n=1}^6\left\{  \Sigma_n^\mathrm{P} = e^{\mathrm{i}\left(\frac{\pi}{6}+(n-1)\frac{\pi}{3}\right)} \mathbb{R}_+ \right\}$, see Figure \ref{Sixrays}. Let $\mathcal{C} =\{c_1,c_2,c_3\}$ be a set of complex constants such that
\begin{equation}
c_1 -c_2 +c_3 +c_1 c_2 c_3=0,
\end{equation}
and define the matrices $\{ H_n\}_{n=1}^6$ by
\begin{equation*}
H_n = 	\begin{pmatrix} 1 & 0 &0 \\ c_n e^{\mathrm{i}(\frac{8}{3}\hat{k}^3+2s\hat{k})}& 1 &0 \\ 0 & 0 & 1 \end{pmatrix}, \ n \ \text{odd}; \quad H_n = 	\begin{pmatrix} 1 & c_n e^{-\mathrm{i}(\frac{8}{3}\hat{k}^3+2s\hat{k})}&0 \\ 0 & 1 &0 \\ 0 & 0 & 1 \end{pmatrix}, \ n \ \text{even},
\end{equation*}
where $c_{n+3}=-c_n,\ n=1,2,3$.
Then there exists a countable set $\mathcal{S}_{\mathcal{C}} =\{ s_j\}_{j=1}^\infty \subset \mathbb{C}$ with $s_j \to \infty$ as $j\to\infty$, such that the following RH problem
\begin{figure}
	\begin{center}
		\begin{tikzpicture}[scale=0.9]
			\node[shape=circle,fill=black,scale=0.15] at (0,0) {0};
			\node[below] at (0.3,0.25) {\footnotesize $0$};
			\draw [] (0,-2.5 )--(0,2.5);
			\draw [-latex] (0,0)--(0,1.25);
			\draw [-latex] (0,0 )--(0,-1.25);
			\draw [] (0,0 )--(2.5,2);
			\draw [-latex] (0,0)--(1.25,1);
			\draw [] (0,0 )--(2.5,-2);
			\draw [-latex] (0,0)--(1.25,-1);
			\draw [] (0,0 )--(-2.5,2);
			\draw [-latex] (0,0)--(-1.25,1);
			\draw [] (0,0 )--(-2.5,-2);
			\draw [-latex] (0,0)--(-1.25,-1);

			\node at (0.9,1.1) {\footnotesize$\Sigma^\mathrm{P}_1$};
			\node at (1.5,-0.5) {\footnotesize$\Sigma^\mathrm{P}_6 $};
			\node at (-1.5,0.5) {\footnotesize$\Sigma^\mathrm{P}_3$};
			\node at (-0.8,-1) {\footnotesize$\Sigma^\mathrm{P}_4$};
			\node at (-0.3,1.2) {\footnotesize$\Sigma^\mathrm{P}_2$};
			\node at ( 0.4,-1.2) {\footnotesize$\Sigma^\mathrm{P}_5$};
			
			
		\end{tikzpicture}
		\caption{ \footnotesize { The jump contour $\Sigma^\mathrm{P}$.}}
		\label{Sixrays}
	\end{center}
\end{figure}

\begin{RHP}\label{1modp2}
	Find   $M^{\mathrm{P}}(\hat{k})=M^{\mathrm{P}}(\hat{k},s)$ with properties
	\begin{itemize}
		\item Analyticity: $M^{\mathrm{P}}(\hat{k})$ is analytical in $\mathbb{C}\setminus \Sigma^{\mathrm{P}}$.
		\item Jump condition:
		\begin{equation*}
			M^{\mathrm{P}}_+( \hat{k})=M^{\mathrm{P}}_-(\hat{k})H_n, \quad \hat{k} \in \Sigma_n^\mathrm{P}, \ n =1,\cdots,6.
		\end{equation*}

		\item Asymptotic behavior:
		\begin{align*}
			&M^{\mathrm{P}}(\hat{k})=I+\mathcal{O}(\hat{k}^{-1}),	\quad \hat{k} \to  \infty.
		\end{align*}
		
	\end{itemize}
\end{RHP}
\noindent has a unique solution $M^P(\hat{k})$ for each $s \in \mathbb{C} \setminus  \mathcal{S}_\mathcal{C}$. For each $n$, the restriction of $M^P(\hat{k})$ to $\arg \hat{k} \in \left(\frac{\pi(2n-3)}{6}, \frac{\pi(2n-1)}{6}\right)$ admits an analytic continuation to $\left( \mathbb{C} \setminus  \mathcal{S}_\mathcal{C} \right) \times \mathbb{C}$ and  there are smooth function $\{M_j^P(s)\}_{j=1}^\infty$ of $s \in \mathbb{C} \setminus  \mathcal{S}_\mathcal{C}$ such that, for each integer $N \ge 0$,
\begin{equation}\label{stanp}
M^P(\hat{k}) = I + \sum_{j=1}^N \frac{M_j^P(s)}{\hat{k}^j} + \mathcal{O}(\hat{k}^{-N-1}),\quad \hat{k} \to \infty,
\end{equation}
uniformly for $s$ in compact subsets of  $\mathbb{C} \setminus  \mathcal{S}_\mathcal{C}$ and for $\arg \hat{k} \in [0, 2\pi]$.
Moreover,
\begin{align}\label{up2}
  v(s)=2	\left(M_1^\mathrm{P}(s)\right)_{12} =2  \left(M_1^\mathrm{P}(s)\right)_{21}
\end{align}
solves the Painlev\'{e} \uppercase\expandafter{\romannumeral2} equation \eqref{p23}.
Further, if $\mathcal{C} = (c_1,0,-c_1)$ where $c_1 \in \mathrm{i} \mathbb{R}$ with $|c_1| <1$, then the leading coefficient $M_1^\mathrm{P}(s)$ is given by
\begin{align}\label{posee}
	M_1^\mathrm{P}(s) = \frac{1}{2} \begin{pmatrix} -i\int_{s}^\infty v(\varsigma)^2\mathrm{d}\varsigma & v(s) &0 \\ v(s) & i\int_{s}^\infty v(\varsigma)^2\mathrm{d}\varsigma &0  \\ 0& 0 & 0 \end{pmatrix},
\end{align}
and for each $c_1 > 0$,
\begin{align}\label{mPbounded}
	\sup_{\hat{k} \in \mathbb{C}\setminus \Sigma^\mathrm{P}} \sup_{s \geq -c_1} |M^\mathrm{P}(\hat{k})|  < \infty.
\end{align}
The solution of the Painlev\'{e} \uppercase\expandafter{\romannumeral2} equation \eqref{p23} is specified by
\begin{equation}
	v(s) \sim -\im c_1 \mathrm{Ai}(s) \sim - \frac{\im c_1}{2\sqrt{\pi}} s^{-\frac{1}{4}}  e^{-\frac{2}{3}s^{\frac{3}{2}}},\ s \to +\infty,
\end{equation}
where $\mathrm{Ai}(s)$ denotes the classical Airy function.

Let $\Sigma^{\mathrm{L}} = \Sigma^{\mathrm{L}}(\hat{k}_0)$ denote the contour $\Sigma^{\mathrm{L}} = \cup_{j=1}^5 \Sigma^{\mathrm{L}}_j $, as depicted in Figure \ref{fmatp2}, where
\begin{align*}
&\Sigma^{\mathrm{L}}_1 = \{\hat{k}| \hat{k}=\hat{k}_0 + r e^{\frac{\pi \mathrm{i}}{6}},\ 0\le r <\infty \}, \quad \Sigma^{\mathrm{L}}_2  = \{\hat{k}|\hat{k}=-\hat{k}_0 +r e^{\frac{5\pi \mathrm{i}}{6}},\ 0\le r <\infty  \},\\
&\Sigma^{\mathrm{L}}_3  = \{\hat{k}|\hat{k} \in \overline{\Sigma^{\mathrm{L}}_2}\}, \quad \Sigma^{\mathrm{L}}_4  =  \{\hat{k}|\hat{k} \in \overline{\Sigma^{\mathrm{L}}_1}\}, \quad
\Sigma^{\mathrm{L}}_5  = \{\hat{k}|-\hat{k}_0 \le \hat{k} \le \hat{k}_0 \}.
\end{align*}
Then, let $v(s;c_1,0,-c_1)$ denote the smooth real-valued solution of \eqref{p23} corresponding to $(c_1,0,-c_1)$ and $M^{\mathrm{P}}(\hat{k}) = M^{\mathrm{P}}(\hat{k},s;c_1,0,-c_1)$ be the corresponding solution of RH problem \ref{1modp2}.

Denote the open subsets $\{V_j\}_{j=1}^4$, as shown in Figure \ref{fmatp2} and define
\begin{equation*}
M^{\mathrm{L}}(\hat{k}) =  M^{\mathrm{P}}(\hat{k}) \times \begin{cases}
\begin{pmatrix} 1 & 0 &0 \\ c_1e^{\mathrm{i} (\frac{8\hat{k}^3}{3} + 2s \hat{k})} & 1 &0 \\ 0 & 0 & 1 \end{pmatrix}, \quad \hat{k} \in V_1 \cup V_2,\\
\begin{pmatrix} 1 & \bar{c}_1 e^{-\mathrm{i} (\frac{8\hat{k}^3}{3} + 2s \hat{k})} &0 \\ 0 & 1 &0 \\ 0 & 0 & 1 \end{pmatrix},\quad \hat{k} \in V_3 \cup V_4,\\
\end{cases}
\end{equation*}
which satisfies the following model RH problem.

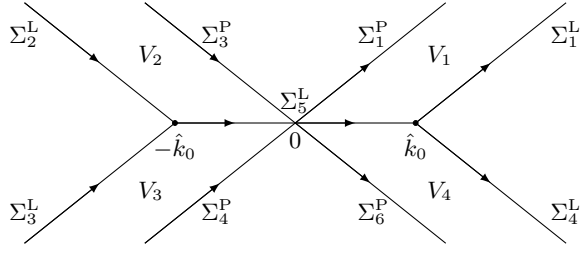
\begin{figure}[http]
	\centering
	\begin{tikzpicture}[scale=0.8]
	\filldraw (-2,0) circle [radius=0.04];
	\filldraw (2,0) circle [radius=0.04];
	\node  [below]  at (0,0) {\footnotesize$0$};
	\node  [below]  at (-2,0) {\footnotesize$-\hat{k}_0$};
	\node  [below]  at (2,0) {\footnotesize$\hat{k}_0$};
	
	\draw[] (-2,0)--(2,0);
	\draw[-latex] (-2,0)--(-1,0);
	\draw[-latex] (0,0)--(1,0);
	
	\draw[] (-4.5,2)--(-2,0);
	\draw[-latex] (-4.5,2)--(-3.25,1);
	\draw[] (0,0)--(-2.5,2);
	\draw[-latex](-2.5,2)--(-5/4,1);
	\draw[] (-4.5,-2)--(-2,0);
	\draw[-latex] (-4.5,-2)--(-3.25,-1);
	\draw[] (0,0)--(-2.5,-2);
	\draw[-latex] (-2.5,-2)--(-5/4,-1);
	
	\draw[] (4.5,2)--(2,0);
	\draw[-latex] (2,0)--(3.25,1);
	\draw[] (0,0)--(2.5,2);
	\draw[-latex] (0,0)--(5/4,1);
	\draw[] (4.5,-2)--(2,0);
	\draw[-latex] (2,0)--(3.25,-1);
	\draw[] (0,0)--(2.5,-2);
	\draw[-latex] (0,0)--(5/4,-1);
	
	\node  [above]  at (-2.4,0.8) {\footnotesize$V_2$};
	\node  [below]  at (-2.4,-0.8) {\footnotesize$V_3$};

	\node  [above]  at (2.4,0.8){\footnotesize$V_1$};
	\node  [below]  at (2.4,-0.8){\footnotesize$V_4$};
	
	    \node  [above]  at (-4.5,1.1) {\footnotesize$\Sigma^{\mathrm{L}}_2$};
	\node  [below]  at (-4.5,-1.1) {\footnotesize$\Sigma^{\mathrm{L}}_3$};

	\node  [above]  at (4.5,1.1){\footnotesize$ \Sigma^{\mathrm{L}}_1$};
	\node  [below]  at (4.5,-1.1){\footnotesize$\Sigma^{\mathrm{L}}_4$};
	  \node  [above]  at (0,0){\footnotesize$\Sigma^{\mathrm{L}}_5$};

	  	    \node  [above]  at (-1.3,1.1) {\footnotesize$\Sigma^{\mathrm{P}}_3$};
	  \node  [below]  at (-1.3,-1.1) {\footnotesize$\Sigma^{\mathrm{P}}_4$};

	  \node  [above]  at (1.3,1.1){\footnotesize$ \Sigma^{\mathrm{P}}_1$};
	  \node  [below]  at (1.3,-1.1){\footnotesize$\Sigma^{\mathrm{P}}_6$};

	\end{tikzpicture}
	\caption{\footnotesize The open subsets $\{V_j\}_{j=1}^4$.}
	\label{fmatp2}
\end{figure}

\begin{RHP}\label{modelp2}
	Find   $M^{\mathrm{L}}(\hat{k})=M^{\mathrm{L}}(\hat{k},s)$ with properties
	\begin{itemize}
		\item Analyticity: $M^{\mathrm{L}}(\hat{k})$ is analytical in $\mathbb{C}\setminus \Sigma^{\mathrm{L}}$.
		\item Jump condition:
		\begin{equation*}
			M^{\mathrm{L}}_+( \hat{k})=M^{\mathrm{L}}_-(\hat{k})V^{\mathrm{L}}(\hat{k}), \quad \hat{k} \in \Sigma^{\mathrm{L}},
		\end{equation*}
where
\begin{equation}\label{jumpl}
V^{\mathrm{L}}(\hat{k}) = \begin{cases}
\begin{pmatrix} 1 & 0 &0 \\ c_1 e^{\mathrm{i} (\frac{8\hat{k}^3}{3} +2 s \hat{k})} & 1 &0 \\ 0 & 0 & 1 \end{pmatrix}, \quad \hat{k} \in \Sigma_1^\mathrm{L} \cup \Sigma_2^\mathrm{L},\\
\begin{pmatrix} 1 & -\bar{c}_1 e^{-\mathrm{i} (\frac{8\hat{k}^3}{3} + 2s \hat{k})} &0 \\ 0 & 1 &0 \\ 0 & 0 & 1 \end{pmatrix},\quad \hat{k} \in \Sigma_3^\mathrm{L}\cup \Sigma_4^\mathrm{L},\\
\begin{pmatrix} 1 -|c_1|^2 & - \bar{c}_1 e^{-\mathrm{i} (\frac{8\hat{k}^3}{3} + 2s \hat{k})} &0 \\ c_1 e^{\mathrm{i} (\frac{8\hat{k}^3}{3} + 2s \hat{k})} & 1 &0 \\ 0 & 0 & 1 \end{pmatrix},\quad \hat{k} \in \Sigma_5^\mathrm{L},
\end{cases}
\end{equation}
with coefficients $c_1 \in \{\mathrm{i} r |-1<r<1 \}$.
		\item Asymptotic behavior: $M^{\mathrm{L}}( \hat{k})=I+\mathcal{O}(\hat{k} ^{-1}),	\quad \hat{k} \to  \infty.$

	\end{itemize}
\end{RHP}
\noindent
This model  RH problem for $M^{\mathrm{L}}(\hat{k})$ is    used to match the local RH problem for $M^{loc}(k)$   in transition zones $\mathcal{T}_{1}$ and $\mathcal{T}_{2}$.
\vspace{4mm}

    \noindent\textbf{Acknowledgements}

    This work is supported by  the National Natural Science
    Foundation of China (Grant No. 12271104, 12347141) and the
    Postdoctoral Fellowship Program of CPSF (Grant No. GZB20230167, 2023M740717).\vspace{2mm}

    \noindent\textbf{Data Availability Statements}

    The data which supports the findings of this study is available within the article.\vspace{2mm}

    \noindent{\bf Conflict of Interest}

    The authors have no conflicts to disclose.

\end{document}